\title{Cyber-Physical Systems under Attack -- Part II: Centralized and
  Distributed Monitor Design}
\title{Attack Detection and Identification in Cyber-Physical Systems
  -- Part II:\\ Centralized and Distributed Monitor Design}
\author{Fabio Pasqualetti, Florian D\"orfler, and Francesco Bullo
  \thanks{This material is based upon work supported in part by NSF grant
    CNS-1135819 and by the Institute for Collaborative Biotechnologies
    through grant W911NF-09-0001 from the U.S. Army Research Office.}
  \thanks{Fabio Pasqualetti, Florian D\"orfler, and Francesco Bullo are
    with the Center for Control, Dynamical Systems and Computation,
    University of California at Santa Barbara, {\tt
      \{fabiopas,dorfler,bullo\}@engineering.ucsb.edu}}%
}
\newtheorem{theorem}{Theorem}[section]
\newtheorem{lemma}[theorem]{Lemma}
\newtheorem{corollary}[theorem]{Corollary}
\newtheorem{definition}{Definition}
\newtheorem{remark}{Remark}
\newtheorem{example}{Example}
\newcommand\oprocendsymbol{\hbox{$\square$}}
\newcommand\oprocend{\relax\ifmmode\else\unskip\hfill\fi\oprocendsymbol}
\newenvironment{pf}{\vspace{1ex}\noindent{\itshape
    Proof:}\hspace{0.5em}} {\hfill\QED\vspace{1ex}}
\renewcommand{\theenumi}{(\roman{enumi})}
\newcommand{\setdef}[2]{\{#1 \; : \; #2\}}
\newcommand{\subscr}[2]{{#1}_{\textup{#2}}}
\newcommand{\supscr}[2]{{#1}^{\textup{#2}}}
\newcommand{\until}[1]{\{1,\dots,#1\}}
\newcommand{\blkdiag}{\textup{blkdiag}}
\newcommand{\Ker}{\operatorname{Ker}}
\newcommand{\Image}{\operatorname{Im}}
\newcommand{\Vtar}{\mathcal{V}^*}
\newcommand{\Star}{\mathcal{S}^*}
\newcommand{\real}{\mathbb{R}}
\newcommand{\transpose}{\mathsf{T}} 
\newcommand{\Basis}{\operatorname{Basis}}
\newcommand{\mc}{\mathcal}
\begin{document}
\maketitle

\begin{abstract}
  Cyber-physical systems integrate computation, communication, and
  physical capabilities to interact with the physical world and
  humans. Besides failures of components, cyber-physical systems are
  prone to malicious attacks so that specific analysis tools and
  monitoring mechanisms need to be developed to enforce system
  security and reliability. This paper builds upon the results
  presented in our companion paper \cite{FP-FD-FB:12a} and proposes
  centralized and distributed monitors for attack detection and
  identification. First, we design optimal centralized attack
  detection and identification monitors. Optimality refers to the
  ability of detecting (respectively identifying) \emph{every}
  detectable (respectively identifiable) attack. Second, we design an
  optimal distributed attack detection filter based upon a waveform
  relaxation technique. Third, we show that the attack identification
  problem is computationally hard, and we design a sub-optimal
  distributed attack identification procedure with performance
  guarantees. Finally, we illustrate the robustness of our monitors to
  system noise and unmodeled dynamics through a simulation study.
\end{abstract}

\section{Introduction}
\emph{Cyber-physical systems} 
need to remain functional and operate reliably in presence of
unforeseen failures and, possibly, external attacks.
Besides failures and attacks on the physical infrastructure,
cyber-physical systems are also prone to cyber attacks against their
data management, control, and communication layer
\cite{ARM-RLE:10,KH:11,JS-MM:07,GEA-DML:05}.

In several cyber-physical systems, including water and gas
distribution networks, electric power systems, and dynamic Leontief
econometric models, the physical dynamics include both differential
equations as well as algebraic constraints.  In \cite{FP-FD-FB:12a} we
model cyber-physical systems under attack by means of linear
continuous-time differential-algebraic systems; we analyze the
fundamental limitations of attack detection and identification, and we
characterize the vulnerabilities of these systems by graph-theoretic
methods. In this paper we design monitors for attack detection and
identification for the cyber-physical model presented in
\cite{FP-FD-FB:12a}.

\noindent
\textbf{Related work.}  Concerns about security of control,
communication, and computation systems are not recent as testified by
the numerous works in the fields of fault-tolerance control and
information security. However, as discussed in \cite{FP-FD-FB:12a},
cyber-physical systems feature vulnerabilities beyond fault-tolerance
control and information security methods.

Attack detection and identification monitors have recently been
proposed.
In \cite{SS-CH:10a,FP-AB-FB:09b} monitoring procedures are designed
for the specific case of state attacks against discrete-time
nonsingular systems. In \cite{FH-PT-SD:11} an algorithm to detect
output attacks against discrete-time nonsingular systems is described
and characterized. In \cite{YM-BS:10a} a detection scheme for replay
attacks is proposed.  Fault detection and identification schemes for
linear differential-algebraic power network models are presented in
\cite{ES:04,ADDG-ST:10} and in the conference version of this paper
\cite{FP-FD-FB:11i}. We remark that the designs in
\cite{ES:04,ADDG-ST:10} consider particular known faults rather than
unknown and carefully orchestrated cyber-physical attacks.  Finally,
protection schemes for output attacks against systems described by
purely static models are presented, among others, in
\cite{DG-HS:10,RBB-KMR-QW-HK-KN-TJO:10}.

\noindent
\textbf{Contributions.} The main contributions of this work are as
follows. 
First, for the differential-algebraic model of cyber-physical systems
under attacks developed in \cite{FP-FD-FB:12a}, we design centralized
monitors for attack detection and identification. With respect to the
existing solutions, in this paper we propose attack detection and
identification filters that are effective against both state and
output attacks against linear continuous-time differential-algebraic
cyber-physical systems. Our monitors are designed by using tools from
geometric control theory; they extend the construction of
\cite{MAM-GCV-ASW-CM:89} to descriptor systems with direct feedthrough
matrix, and they are guaranteed to achieve optimal performance, in the
sense that they detect (respectively identify) every detectable
(respectively identifiable) attack.

Second, we develop a fully distributed attack detection filter with
optimal (centralized) performance. Specifically, we provide a
distributed implementation of our centralized attack detection filter
based upon iterative local computations by using the Gauss-Jacobi
waveform relaxation technique. For the implementation of this method,
we rely upon cooperation among geographically deployed control
centers, each one responsible for a part of the system. In particular,
we require each control center to have access to the measurements of
its local subsystem, synchronous communication among neighboring
control centers at discrete time instants, and ability to perform
numerical integration.



Third, we show that the attack identification problem is inherently
computationally hard. Consequently, we design a distributed
identification method that achieves identification, at a low
computational cost and for a class of attacks, which can be
characterized accurately. Our distributed identification methods is
based upon a \emph{divide and conquer} procedure, in which first
corrupted regions and then corrupted components are identified by
means of local identification procedures and cooperation among
neighboring regions. Due to cooperation, our distributed procedure
provably improves upon the fully decoupled approach advocated in
decentralized control \cite{MS-YG:92}.

Fourth, we present several illustrative examples. Besides
illustrating our findings concerning centralized and distributed
detection and identification, our numerical investigations show that our
methods are effective also in the presence of system noise,
nonlinearities, and modeling uncertainties.

Finally, as a minor contribution, we build upon the estimation method
in \cite{FJB-TF-WP-GZ:11} to characterize the largest subspace of the
state space of a descriptor system that can be reconstructed in the
presence of unknown inputs.


\noindent
\textbf{Paper organization.} Section \ref{sec:setup} contains a
mathematical description of the problems under investigation. 
In Section \ref{sec:detection} we design monitors
for attack detection. Specifically, we propose optimal centralized,
decentralized, and distributed monitors. In Section
\ref{sec:identification} we show that the attack identification
problem is computationally hard. Additionally, we design an optimal
centralized and a sub-optimal decentralized attack identification
monitor. Finally, Section \ref{sec:example} and Section
\ref{sec:conclusion} contain, respectively, our numerical studies, and
our conclusion.


\section{Problem setup and preliminary concepts}\label{sec:setup}
In this section we recall the framework proposed in
\cite{FP-FD-FB:12a} for cyber-physical systems and attacks. We model a
cyber-physical system under attack with the time-invariant descriptor
system
\begin{align}
  \label{eq: cyber_physical_fault}
  \begin{split}
  E\dot x(t) &= Ax(t) + Bu(t),\\
  y(t)&= Cx(t) + Du(t) ,
  \end{split}
\end{align}
where $x(t) \in \real^n$, $y(t) \in \real^p$, $E \in \real^{n \times
  n}$, $A \in \real^{n \times n}$, $B \in \real^{n \times m}$, $C \in
\real^{p \times n}$, and $D \in \real^{p \times m}$. Here the matrix
$E$ is possibly singular, and the input terms $Bu(t)$ and $Du(t)$ are
unknown signals describing disturbances affecting the plant. Besides
reflecting the genuine failure of systems components, these
disturbances model the effect of an attack against the cyber-physical
system. For notational convenience and without affecting generality,
we assume that each state and output variable can be independently
compromised by an attacker. Thus, we let $B = \begin{bmatrix}I ,
  0\end{bmatrix}$ and $D = \begin{bmatrix}0 , I\end{bmatrix}$ be
partitioned into identity and zero matrices of appropriate dimensions,
and, accordingly, $u(t) = \begin{bmatrix}
  u_{x}(t)^{\transpose},u_{y}(t)^{\transpose} \end{bmatrix}^{\transpose}$.
Hence, the unknown input $(Bu(t),Du(t)) = (u_{x}(t),u_{y}(t))$ can be
classified as \emph{state attack} affecting the system dynamics and as
\emph{output attack} corrupting directly the measurements vector.

The attack signal $t \mapsto u(t) \in \mathbb R^{n+p}$
depends upon the specific attack strategy. In the presence of $k \in
\mathbb{N}_0$, $k \le n+p$, attackers indexed by the {\em attack set}
$K \subseteq \until{n+p}$ only and all the entries $K$ of $u(t)$ are
nonzero over time. To underline this sparsity relation, we sometimes
use $u_K(t) \in \mathbb R^{|K|}$ to denote the {\em attack mode}, that is the subvector of
$u(t)$ indexed by $K$. Accordingly, we use the pair $(B_{K},D_{K})$, where
$B_K$ and $D_K$ are the submatrices of $B$ and $D$ with columns in
$K$, to denote the {\em attack signature}. Hence, $Bu(t) = B_K u_K
(t)$, and $Du(t) = D_K u_K (t)$. 
We make the following assumptions on system \eqref{eq:
  cyber_physical_fault}, a discussion of which can be found in
\cite{FP-FD-FB:12a}:
\begin{enumerate}\setcounter{enumi}{2}
\item[(A1)] the pair $(E,A)$ is regular, that is, $\textup{det}(sE - A)$ does
  not vanish identically,
\item[(A2)] the initial condition $x(0) \in \mathbb R^{n}$ is
  consistent, that is, $(Ax(0) + B u(0)) \perp \Ker(E^{\transpose}) = 0$; and
\item[(A3)] the input signal $u(t)$ is smooth.
\end{enumerate}

The following definitions are inspired by our results in
\cite{FP-FD-FB:12a}. Let $y(x_0,u,t)$ be the output sequence generated
from the initial state $x_0$ under the attack signal $u(t)$.

\begin{definition}{\bf\emph{(Undetectable attack
    set)}}\label{undetectable_input}
  For the linear descriptor system \eqref{eq: cyber_physical_fault},
  the attack set $K$ is \emph{undetectable} if there exist initial
  conditions $x_1 , x_2 \in \real^n$, and an attack mode $u_K (t)$
  such that, for all $t \in \real_{\ge 0}$, it holds $y(x_1,u_K,t) = y(x_2,0,t)$.
\end{definition}
\smallskip

\begin{definition}{\bf\emph{(Unidentifiable attack
      set)}}\label{unidentifiable_input}
  For the linear descriptor system \eqref{eq: cyber_physical_fault},
  the attack set $K$ is \emph{unidentifiable} if there exists an
  attack set $R$, with $|R|\le|K|$ and $R\neq{K}$, initial conditions
  $x_K , x_R \in \real^n$, and attack modes $u_K(t)$, $u_R (t)$ such
  that, for all $t \in \real_{\ge 0}$, it holds $y(x_K,u_{K},t) =
  y(x_R,u_{R},t)$.
\end{definition}
\smallskip

In our companion paper \cite{FP-FD-FB:12a} we characterize 
undetectable and unidentifiable attacks. In this paper, instead, we
design monitors to achieve attack detection and identification.



\section{Monitor design for attack detection}\label{sec:detection}
\subsection{Centralized attack detection monitor design}
In the following we present a centralized attack detection filter based on a modified Luenberger observer. 

\begin{theorem}{\bf\emph{(Centralized attack detection filter)}}
\label{Theorem: Centralized attack detection filter}
Consider the descriptor system \eqref{eq: cyber_physical_fault} and
assume that the attack set $K$ is detectable, and that the network
initial state $x(0)$ is known.  Consider the {\em centralized attack
  detection filter} \begin{align}
	\begin{split}
		E \dot w(t) &= (A + G C) w(t) - Gy(t) , \\
		r(t) &= C w(t) - y(t) ,
	\end{split}
	\label{eq:detection_filter}
\end{align}
where $w(0) = x(0)$ and the output injection $G \in \mathbb R^{n
  \times p}$ is such that the pair $(E,A+GC)$ is regular and
Hurwitz. Then $r (t) = 0$ at all times $t \in \real_{\ge 0}$ if and
only if $u_{K}(t) = 0$ at all times $t \in \real_{\ge 0}$. Moreover,
in the absence of attacks, the filter error $w(t) - x(t)$ is
exponentially stable.
\end{theorem}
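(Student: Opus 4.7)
The plan is to work with the filter error $e(t) = w(t) - x(t)$ and reduce both claims to properties of the closed-loop descriptor pair $(E, A+GC)$ together with the detectability hypothesis imported from \cite{FP-FD-FB:12a}.

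First I would compute the error dynamics by subtracting \eqref{eq: cyber_physical_fault} from \eqref{eq:detection_filter}. Using $y(t) = Cx(t) + Du(t)$ and $Bu(t) = B_K u_K(t)$, $Du(t) = D_K u_K(t)$, the error satisfies
\begin{align*}
  E\dot e(t) &= (A+GC)e(t) - (B_K + G D_K) u_K(t), \\
  r(t) &= C e(t) - D_K u_K(t),
\end{align*}
with initial condition $e(0) = 0$ by hypothesis. Since $(E,A+GC)$ is regular, the descriptor error system has a unique smooth response to any smooth $u_K$ consistent with assumption (A3).

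The easy direction (sufficiency) is immediate: if $u_K(t) \equiv 0$, then the error system is homogeneous with zero initial condition, so $e(t) \equiv 0$ and hence $r(t) \equiv 0$; moreover, for any perturbed initial error the Hurwitz property of $(E,A+GC)$ yields exponential decay, which is the claimed exponential stability of the filter. For the converse (necessity), suppose $r(t) \equiv 0$. Substituting $Cw(t) = y(t)$ into the filter equation eliminates the injection term and leaves $E\dot w(t) = A w(t)$ with output $Cw(t) = y(t)$. Thus $w(\cdot)$ is a solution of the \emph{unforced} system \eqref{eq: cyber_physical_fault} (attack mode identically zero, initial condition $w(0) = x(0)$) that produces the same output $y(\cdot)$ as the attacked trajectory $(x(\cdot), u_K(\cdot))$. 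In the notation of Definition~\ref{undetectable_input}, this gives $y(x(0), u_K, t) = y(w(0), 0, t)$ for all $t \geq 0$, which, by the contrapositive of the detectability hypothesis on $K$, forces $u_K(t) \equiv 0$.

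The main obstacle I anticipate is purely a bookkeeping one, namely checking that the descriptor-system solutions invoked in the converse argument are well defined: one must verify that $w(\cdot)$ satisfying $E\dot w = A w$ with $Cw = y$ genuinely qualifies as an admissible trajectory of \eqref{eq: cyber_physical_fault} in the sense required by Definition~\ref{undetectable_input}, i.e., that its initial condition is consistent and that uniqueness of responses applies. Regularity of both $(E,A)$ (assumption (A1)) and $(E,A+GC)$, together with the equality $Cw=y$ which propagates consistency from $x(0)$ to $w(0)=x(0)$, handles this, so the argument closes cleanly and yields the stated if-and-only-if together with the exponential stability conclusion.
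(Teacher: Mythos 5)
Your proposal is correct, and the error dynamics you derive coincide with the paper's, but your argument for the equivalence $r\equiv 0 \Leftrightarrow u_K\equiv 0$ is genuinely different. The paper works algebraically: it invokes the characterization of invariant zeros via the Rosenbrock pencil (citing \cite[Proposition 3.4]{TG:93}), shows by elementary manipulations that the pencil of the error system loses rank exactly when the pencil of the plant $(E,A,B_K,C,D_K)$ does, and then appeals to the equivalence between detectability of $K$ and the absence of invariant zeros of the plant established in the companion paper. You instead argue in the time domain directly from Definition~\ref{undetectable_input}: sufficiency follows from uniqueness of the homogeneous solution with $e(0)=0$, and for necessity you observe that $r\equiv 0$ collapses the filter equation to $E\dot w = Aw$ with $Cw = y$, so that $w(\cdot)$ is an unforced plant trajectory reproducing the corrupted output, which would witness undetectability of $K$ unless $u_K\equiv 0$. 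Your route is more elementary and makes transparent why the behavioral definition of detectability is exactly the right hypothesis; the paper's pencil computation buys two extra facts that are reused later, namely the explicit absence of invariant zeros of the error system (the template for the identification filters of Section~\ref{sec:identification}) and the observability of $(E,A,C)$, which guarantees that a Hurwitz output injection $G$ actually exists. The well-posedness caveat you flag (that $w(\cdot)$ is an admissible, uniquely determined trajectory of \eqref{eq: cyber_physical_fault}) is handled essentially as you say, by regularity of $(E,A+GC)$ and smoothness of $y$; the paper faces the same issue and likewise disposes of it by invoking (A2)--(A3) before citing the invariant-zero characterization.
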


\begin{proof}
  Consider the error $e(t) = w(t)-x(t)$ between the dynamic states of
  the filter \eqref{eq:detection_filter} and the descriptor system
  \eqref{eq: cyber_physical_fault}. The error dynamics with output
  $r(t)$ are given by
  \begin{align}
    \begin{split}
      \begin{split}
        E \dot e(t) &= (A + G C) e(t) - (B_{K} + GD_{K})u_{K}(t), \\
        r(t) &= Ce(t) - D_{K} u_{K}(t) ,
      \end{split}
    \end{split}
    \label{eq:detection_filter - error dynamics}
  \end{align}
  where $e(0) = 0$. To prove the theorem we show that the error system
  \eqref{eq:detection_filter - error dynamics} has no invariant zeros,
  that is, $r(t)=0$ for all $t \in \real_{\ge 0}$ if and only if $u_{K}(t)
  = 0$ for all $t \in \real_{\ge 0}$. Since the initial condition
  $x(0)$ and the input $u_{K}(t)$ are assumed to be consistent (A2) and
  non-impulsive (A3), the error system \eqref{eq:detection_filter -
    error dynamics} has no invariant zeros if and only if
  \cite[Proposition 3.4]{TG:93} there exists no triple $(s,\bar w,g_{K})
  \in \mathbb C \times \mathbb R^{n} \times \mathbb R^{p}$ satisfying
  \begin{equation}
    \begin{bmatrix}
      sE - (A+ G C) & B_{K} +  G  D_{K}  \\
      C & - D_{K}
    \end{bmatrix}
    \begin{bmatrix}
      \bar w \\ g_{K}
    \end{bmatrix}
    =
    \begin{bmatrix}
      0 \\ 0
    \end{bmatrix}
    \label{eq: pencil for error system - 1}
    \,.
  \end{equation}
  The second equation of \eqref{eq: pencil for error system - 1}
  yields $C \bar w =  D_{K} g_{K}$. Thus, by substituting
  $C \bar w$ by $D_{K} g_{K}$ in the first equation of
  \eqref{eq: pencil for error system - 1}, the set of equations
  \eqref{eq: pencil for error system - 1} can be equivalently
  written as
  \begin{equation}
    \begin{bmatrix}
      sE -  A & B_{K}  \\
      C & - D_{K}
    \end{bmatrix}
    \begin{bmatrix}
      \bar w \\ g_{K}
    \end{bmatrix}
    =
    \begin{bmatrix}
      0 \\ 0
    \end{bmatrix}
    \label{eq: pencil for error system - 2}
    \,.
  \end{equation}
  Finally, note that a solution $(s,-\bar w,g_{K})$ to above set of
  equations would yield an invariant zero, zero state, and zero input
  for the descriptor system \eqref{eq: cyber_physical_fault}. By the
  detectability assumption,\footnote{Due to linearity of the
    descriptor system \eqref{eq: cyber_physical_fault}, the
    detectability assumption reads as ``the attack $(B,D,u(t))$ is
    detectable if there exist no initial condition $x_0 \in
    \real^{n}$, such that $y(x_0,u,t) = 0$ for all $t \in
    \mathbb{R}_{\ge 0}$.''}  the descriptor model \eqref{eq:
    cyber_physical_fault} has no zero dynamics and the matrix pencil
  in \eqref{eq: pencil for error system - 2} necessarily has full
  rank.  It follows that the triple $(E,A,C)$ is observable, so that
  $G$ can be chosen to make the pair $(E, A + G C)$ Hurwitz
  \cite[Theorem 4.1.1]{LD:89}, and the error system
  \eqref{eq:detection_filter - error dynamics} is stable and with no
  zero dynamics. 
\end{proof}

\begin{remark}{\bf\emph{(Detection and identification filters for
      unknown initial condition and noisy
      dynamics)}}\label{remark:det_noise}
\label{Remark: detection filter for unknown initial condition}
If the network initial state is not available, then, since $(E, A+GC)$
is Hurwitz, an arbitrary initial state $w(0) \in \mathbb R^{n}$ can be
chosen. Consequently, the filter converges asymptotically, and some
attacks may remain undetected or unidentified. For instance, if the
eigenvalues of the detection filter matrix have real part smaller than
$c < 0$, with $c \in \real$, then, in the absence of attacks, the
residual $r(t)$ exponentially converges to zero with rate less than
$c$. Hence, only inputs $u(t)$ that vanish faster or equal than
$e^{-ct}$ may remain undetected by the filter
\eqref{eq:detection_filter}. 
 Alternatively, the detection filter can be modified so as to
 converge in a predefined finite time, see
 \cite{AVM-HTT:94,TR-FA:08}. In this case, every attack signal is
 detectable after a finite transient.

If the dynamics and the measurements of \eqref{eq:
  cyber_physical_fault} are affected by modeling uncertainties and
noise with known statistics, then the output injection matrix $G$ in
\eqref{eq:detection_filter} should be chosen as to optimize the
sensitivity of the residual $r(t)$ to attacks versus the effect of
noise. Standard robust filtering or model matching techniques can be
adopted for this task \cite{SS-IP:05}. Statistical hypothesis
techniques can subsequently be used to analyze the residual $r (t)$
\cite{MB-IVN:93}. Finally, as discussed in \cite{FP-FD-FB:12a},
attacks aligned with the noise statistics turn out to be
undetectable.
\oprocend
\end{remark}


Observe that the design of the filter \eqref{eq:detection_filter} is
independent of the particular attack signature $(B_{K},D_{K})$ and its
performance is optimal in the sense that any detectable attack set $K$
can be detected.
 We remark that for index-one descriptor systems such as power system
 models, the filter \eqref{eq:detection_filter} can analogously be
 designed for the corresponding Kron-reduced model, as defined in
 \cite{FP-FD-FB:12a}. In this case, the resulting attack detection
 filter is low-dimensional and non-singular but also non-sparse, see
 \cite{FP-FD-FB:11i}.
 In comparison, the presented filter \eqref{eq:detection_filter},
 although inherently centralized, features the {\em sparse} matrices
 $(E,A,C)$.  This sparsity will be key to develop a distributed attack
 detection filter.

\subsection{Decentralized attack detection monitor
  design}\label{sec:decentralized}
  
Let $\subscr{G}{t} = (\mc V, \mathcal{E})$ be the directed graph
associated with the pair $(E,A)$, where the vertex set $V = \until n$
corresponds to the system state, and the set of directed edges
$\mathcal{E} =\{(x_j,x_i) : e_{ij}\neq 0 \text{ or } a_{ij} \neq 0\}$ is
induced by the sparsity pattern of $E$ and $A$; see also \cite[Section
IV]{FP-FD-FB:12a}. Assume that $V$ has been partitioned into $N$
disjoint subsets as $V = V_1 \cup \cdots \cup V_N$, with $|V_i| =
n_i$, and let $\subscr{G}{t}^i= (V_i , \mathcal{E}_i)$ be the $i$-th
subgraph of $\subscr{G}{r}$ with vertices $V_i$ and edges
$\mathcal{E}_i = \mathcal{E} \cap (V_i \times V_i)$.
%
According to this partition, and possibly after relabeling the
states, the system matrix $A$ in \eqref{eq: cyber_physical_fault} can
be written\,as
\begin{align*} 
  A =
  \begin{bmatrix}
    A_{1} & \cdots & A_{1N}\\
    \vdots & \vdots & \vdots\\
    A_{N1} & \cdots & A_{N}
  \end{bmatrix}
  =
  A_{D} + A_{C},
\end{align*}
where $A_i \in \mathbb{R}^{n_i \times n_i}$, $A_{ij} \in
\mathbb{R}^{n_i \times n_j}$, $A_D$ is block-diagonal, and $A_C \!=\!
A - A_D$. Notice that, if $A_D = \blkdiag(A_1,\dots,A_N)$, then
$A_{D}$ represents the isolated subsystems and $A_{C}$ describes the
interconnection structure among the subsystems. Additionally, if the
original system is sparse, then several blocks in $A_C$ vanish.
We make the following
assumptions:
\begin{enumerate}
\item[(A4)] the matrices $E$, $C$ are block-diagonal, that is $E =
  \blkdiag(E_{1},\dots,E_{N})$, $C = \blkdiag(C_{1},\dots,C_{N})$,
  where $E_i \in \mathbb{R}^{n_i \times n_i}$ and $C_i \in
\mathbb{R}^{p_i \times n_i}$,
\item[(A5)] each pair $(E_i,A_i)$ is regular, and each triple
  $(E_i,A_i,C_i)$ is observable.
\end{enumerate}
Given the above structure and in the absence of attacks, the
descriptor system \eqref{eq: cyber_physical_fault} can be written as
the interconnection of $N$ subsystems of the form
\begin{align}\label{eq:subsystem}
  \begin{split}
    E_i \dot x_i (t)&= A_i x_i(t) + \sum_{j \in \mc N_i^\textup{in}} A_{ij} x_j(t),\\
    y_i (t) &= C_i x_i (t), \;\; i \in \until N,
  \end{split}
\end{align}
where $x_i (t)$ and $y_i(t)$ are the state and output of the $i$-th
subsystem and $\mc N_i^\textup{in} = \setdef{j \in \until{N}\setminus
  i}{\|A_{ij}\| \neq 0}$ are the in-neighbors of subsystem $i$. We
also define the set of out-neighbors as $\mc N_i^\textup{out} =
\setdef{j \in \until{N}\setminus i}{\|A_{ji}\| \neq 0}$. We assume the
presence of a \emph{control center} in each subnetwork
$\subscr{G}{t}^i$ with the following capabilities:
\begin{enumerate}
\item[(A6)] the $i$-th control center knows the matrices $E_i$, $A_i$,
  $C_i$, as well as the neighboring matrices $A_{ij}$, $j \in \mc
  N_i^\textup{in}$; and
\item[(A7)] the $i$-th control center can transmit an estimate of its
  state to the $j$-th control center if $j \in \mc
  N_i^\textup{out}$.
\end{enumerate}

Before deriving a fully-distributed attack detection filter, we
explore the question of {\em decentralized stabilization} of the error
dynamics of the filter \eqref{eq:detection_filter}. For each subsystem
\eqref{eq:subsystem}, consider the local residual generator
\begin{align}
    E_i \dot w_i(t) &= (A_i + G_i C_i) w_i(t) + \sum_{j \in \mc N_i^\textup{in}} A_{ij}
    x_j(t) - G_i y_i(t), \nonumber\\
    r_i (t) &= y_i(t) - C_i w_i(t), \;\; i \in \until N,
    \label{eq:detection_filter_decentralized}
\end{align}
where $w_i(t)$ is the $i$-th estimate of $x_i(t)$ and $G_{i} \in
\mathbb R^{n_{i} \times p_{i}}$. In order to derive a compact
formulation, let $w(t) = [w_1^\transpose (t) \, \cdots \,
w_N^\transpose (t)]^\transpose$, $r(t) = [r_1^\transpose (t) \, \cdots
\, r_N^\transpose (t)]^\transpose$, and $G =
\blkdiag(G_1,\dots,G_N)$. Then, the overall filter dynamics
\eqref{eq:detection_filter_decentralized} are
\begin{align}\label{eq:detection_filter_decentralized_vector_notation}
\begin{split}
  E \dot w(t) &= (A_D + GC) w(t) + A_C w(t) - G y(t) \,,
  \\ r(t) &= y(t) - C w(t) \,.
  \end{split}
\end{align}
Due to the observability assumption (A5) an output injection matrix
$G_i$ can be chosen such that each pair $(E_{i},A_{i}-G_{i}C_{i})$ is Hurwitz
\cite[Theorem 4.1.1]{LD:89}. Notice that, if each pair $(E_i,A_i +
G_iC_i)$ is regular and Hurwitz, then $(E, A_D + GC)$ is also regular
and Hurwitz since the matrices $E$ and $A_D + GC$ are
block-diagonal. We are now ready to state a condition for the
decentralized stabilization of the filter
\eqref{eq:detection_filter_decentralized_vector_notation}.

%
\begin{lemma}{\bf\emph{(Decentralized stabilization of the attack
    detection filter)}}\label{Lemma:decentralized_estimation}
Consider the descriptor system \eqref{eq: cyber_physical_fault}, and
assume that the attack set $K$ is detectable and that the network
initial state $x(0)$ is known.
  Consider the attack detection filter
  \eqref{eq:detection_filter_decentralized_vector_notation}, where
  $w(0) = x(0)$ and $G = \blkdiag(G_1,\dots,G_N)$ is such that $(E,A_D
  + GC)$ is regular and Hurwitz. Assume that
  \begin{equation}
     \rho \left( (j\omega E-A_D-GC)^{-1} A_C  \right) < 1
    \text{ for all }\omega \in \real \,,
    \label{eq:decentralized_stabilization_condition}
  \end{equation}
  where $\rho(\cdot)$ denotes the spectral radius operator.  Then $r
  (t) = 0$ at all times $t \in \real_{\ge 0}$ if and only if $u_{K}(t)
  = 0$ at all times $t \in \real_{\ge 0}$. Moreover, in the absence of
  attacks, the filter error $w(t) - x(t)$ is exponentially stable.
\end{lemma}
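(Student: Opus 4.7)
The plan is to reduce this lemma to Theorem~\ref{Theorem: Centralized attack detection filter} by showing that the two proposed filters share the same error dynamics and that the spectral radius condition~\eqref{eq:decentralized_stabilization_condition} guarantees regularity and Hurwitz stability of the pencil $(E,A+GC)$, which Theorem~\ref{Theorem: Centralized attack detection filter} required directly.

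First, I would derive the error dynamics for $e(t)=w(t)-x(t)$ by directly subtracting $E\dot{x}(t)=(A_D+A_C)x(t)+B_Ku_K(t)$ from the overall filter~\eqref{eq:detection_filter_decentralized_vector_notation} and using $y(t)=Cx(t)+D_Ku_K(t)$. A short computation yields
\begin{equation*}
E\dot e(t)=(A+GC)e(t)-(B_K+GD_K)u_K(t),\quad r(t)=-Ce(t)+D_Ku_K(t),
\end{equation*}
with $e(0)=0$. This is (up to a harmless sign) the exact same error system as in the proof of Theorem~\ref{Theorem: Centralized attack detection filter}. Hence, as soon as one has shown that $(E,A+GC)$ is regular and Hurwitz, the invariant-zero argument built around the pencil~\eqref{eq: pencil for error system - 1}--\eqref{eq: pencil for error system - 2} transfers verbatim, giving both the ``$r\equiv 0 \Leftrightarrow u_K\equiv 0$'' equivalence and exponential stability of the error in the attack-free case.

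The heart of the proof is therefore the implication
\begin{equation*}
\rho\bigl((j\omega E-A_D-GC)^{-1}A_C\bigr)<1 \; \forall\,\omega\in\real \;\Longrightarrow\; (E,A+GC) \text{ is regular and Hurwitz.}
\end{equation*}
I would establish this through the factorization
\begin{equation*}
\det(sE-A-GC)=\det(sE-A_D-GC)\,\det\bigl(I-(sE-A_D-GC)^{-1}A_C\bigr),
\end{equation*}
which is valid wherever the first factor does not vanish. Since, by hypothesis, $(E,A_D+GC)$ is regular and Hurwitz, the rational matrix $H(s):=(sE-A_D-GC)^{-1}A_C$ is analytic in the closed right half-plane $\overline{\complex_+}$ and vanishes at infinity. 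A generalized Nyquist/small-gain argument, applied via a continuous homotopy $\epsilon\mapsto \epsilon A_C$ for $\epsilon\in[0,1]$, then shows that no eigenvalue of $H(s)$ can reach the value $1$ anywhere in $\overline{\complex_+}$: on the imaginary axis this is ruled out by the spectral radius assumption, at infinity it is ruled out by $H(\infty)=0$, and the maximum modulus principle applied to the characteristic roots of $H(s)$ (regarded as algebraic functions of $s$) extends the bound to the interior of $\complex_+$. Consequently $\det(I-H(s))$ is nonzero on $\overline{\complex_+}$, which simultaneously shows that $\det(sE-A-GC)$ is not identically zero (regularity) and has no roots with nonnegative real part (Hurwitz).

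The main obstacle is the last step: making the small-gain/Nyquist argument fully rigorous for descriptor systems, in particular handling the behaviour at $s=\infty$ and the multi-valued nature of the eigenvalues of $H(s)$. Once this is in place, combining it with the error-dynamics computation and invoking the invariant-zero analysis from the proof of Theorem~\ref{Theorem: Centralized attack detection filter} immediately delivers both conclusions of the lemma.
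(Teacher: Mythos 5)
Your proposal follows essentially the same route as the paper: the paper also derives the error dynamics $E\dot e=(A_D+A_C+GC)e-(B_K+GD_K)u_K$, disposes of the ``$r\equiv 0\Leftrightarrow u_K\equiv 0$'' part by the same zero-dynamics argument borrowed from Theorem~\ref{Theorem: Centralized attack detection filter}, and then proves stability by viewing the error system as the feedback interconnection of $\Gamma_1: E\dot e=(A_D+GC)e+v$ with $\Gamma_2: v=A_C e$ and invoking the spectral-radius small-gain theorem with loop operator $(j\omega E-A_D-GC)^{-1}A_C$ --- which is precisely the generalized-Nyquist/determinant-factorization argument you are unpacking by hand. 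One concrete flaw in your unpacking, however, sits exactly at the point you flag as the main obstacle: the claim that $H(s)=(sE-A_D-GC)^{-1}A_C$ ``vanishes at infinity'' is false when $E$ is singular. For a regular pencil, $(sE-A_D-GC)^{-1}$ does not decay along the directions in $\Ker(E)$ (for an index-one pencil it tends to a nonzero limit there, and for higher index it can grow polynomially in $s$), so $H(\infty)$ is in general a nonzero matrix and could in principle have spectral radius equal to $1$ even though $\rho(H(j\omega))<1$ for every finite $\omega$. To close this you must either argue that $\sup_{\omega}\rho(H(j\omega))<1$ (uniformity, not just pointwise strictness) or restrict attention to the behavior of $H$ on the finite-dynamics part of the pencil; the paper avoids the issue by delegating it to the cited small-gain theorem for interconnected systems. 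Apart from this point at $s=\infty$, your homotopy/subharmonicity argument for excluding eigenvalue $1$ in the open right half-plane is sound and matches what the citation delivers.
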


\begin{proof}
The error $e(t) = w(t) - x(t)$ obeys the dynamics
\begin{align}
    E \dot e(t) &= (A_D + A_C  + GC) e(t) - (B_{K}+GD_{K}) u_{K}(t),
    \nonumber \\
    r(t) &= C e(t) - D_{K} u_{K}(t)
    \label{eq:error_system} \,.
\end{align}
A reasoning analogous to that in the proof of Theorem \ref{Theorem:
  Centralized attack detection filter} shows the absence of zero
dynamics. Hence, for $r (t) = 0$\,at\,all times $t \in \real_{\ge 0}$ if
and only if $u_{K}(t) = 0$ at all times $t \in \real_{\ge 0}$.

To show stability of the error dynamics in the absence of attacks, we employ the small-gain
approach to large-scale interconnected systems \cite{MV:81} and
rewrite the error dynamics \eqref{eq:error_system} as the closed-loop
interconnection of the two subsystems
  \begin{align*}
    \Gamma_{1}:& \quad E \dot e(t) = (A_{D} + GC) e(t) + v(t) \,, \\
     \Gamma_{2}:& \quad v(t) = A_{C}e(t)   \,.
  \end{align*}
  Since both subsystems $\Gamma_{1}$ and $ \Gamma_{2}$ are causal and
  internally Hurwitz stable, the overall error dynamics
  \eqref{eq:error_system} are stable if the loop transfer function
  $\Gamma_{1}(j \omega) \cdot \Gamma_{2}$ satisfies the spectral
  radius condition
  $\rho(\Gamma_{1}(j \omega) \cdot \Gamma_{2}) < 1$ for all $\omega
  \in \real$ \cite[Theorem 4.11]{SS-IP:05}. The latter condition is
  equivalent to \eqref{eq:decentralized_stabilization_condition}.
\end{proof}

Observe that, although control centers can compute the output
injection matrix independently of each other, an implementation of the
decentralized attack detection filter
\eqref{eq:detection_filter_decentralized_vector_notation} requires
control centers to continuously exchange their local estimation
vectors. Thus, this scheme has high communication cost, and it may not
be broadly applicable. A solution to this problem is presented in the
next section.

\subsection{Distributed attack detection monitor design}
In this subsection we exploit the classical waveform relaxation method
to develop a fully distributed variation of the decentralized attack
detection filter
\eqref{eq:detection_filter_decentralized_vector_notation}.  We refer
the reader to \cite{EL-AER-ALSV:82,MLC-MDI:94}
for a comprehensive discussion of waveform relaxation methods.  The
Gauss-Jacobi waveform relaxation method applied to the system
\eqref{eq:detection_filter_decentralized_vector_notation} yields the
{\em waveform relaxation iteration}
\begin{align}
  \label{eq:waveform}
  \begin{split}
  E \dot w^{(k)} (t)&= A_D w^{(k)}(t) + A_C w^{(k-1)}(t) - G y(t)
  \end{split}
  \,,
\end{align}
where $k \in \mathbb N$ denotes the iteration index, $t \in {[0,T]}$
is the integration interval for some uniform
time horizon $T>0$, and $w^{(k)}:\,[0,T] \to \mathbb R^{n}$ is a trajectory 
with the initial condition $w^{(k)} (0) = w_0$ for each $k \in \mathbb N$. 
Notice that \eqref{eq:waveform} is a descriptor
system in the variable $w^{(k)}$ and the vector $A_C w^{(k-1)}$ is a
known input, since the value of $w(t)$ at iteration $k-1$ is used. The
iteration \eqref{eq:waveform} is said to be (uniformly) \emph{convergent} if
\begin{align*}
  \lim_{k \rightarrow \infty} \max_{t \in [0,T]} \bigl\| w^{(k)} (t) - w(t) \bigr\|_{\infty} = 0\,,
\end{align*}
where $w(t)$ is the solution of the non-iterative dynamics
\eqref{eq:detection_filter_decentralized_vector_notation}. In order to
obtain a low-complexity distributed detection scheme, we use the
waveform relaxation iteration \eqref{eq:waveform} to iteratively
approximate the decentralized filter
\eqref{eq:detection_filter_decentralized_vector_notation}. 

We start by presenting a convergence condition for the iteration
\eqref{eq:detection_filter_decentralized_vector_notation}. Recall that
a function $f:\, \mathbb R_{\geq 0} \to \mathbb R^{p}$ is said to be
of {\em exponential order} $\beta$ if there exists $\beta \in \mathbb
R$ such that the exponentially scaled function $\tilde f:\mathbb
R_{\geq 0} \to \mathbb R^{p}$, $f(t) = f(t) e^{-\beta t}$ and all its
derivatives exist and are bounded.
An elegant analysis of the waveform relaxation iteration
\eqref{eq:waveform} can be carried out in the Laplace domain
\cite{ZZB-XY:10}, where the operator mapping $w^{(k-1)}(t)$ to
$w^{(k)}(t)$ is $(sE-A_D-GC)^{-1} A_C$.
Similar to the regular Gauss-Jacobi iteration, convergence conditions
of the waveform relaxation iteration \eqref{eq:waveform} rely on the
contractivity of the iteration operator.

\begin{lemma}{\bf\emph{(Convergence of the waveform relaxation
        \cite[Theorem 5.2]{ZZB-XY:10})}}\label{Lemma:waveform_condition} Consider
    the waveform relaxation iteration \eqref{eq:waveform}. Let the
    pair $(E,A_D + GC)$ be regular, and the initial condition $w_{0}$
    be consistent. Let $y(t)$, with $t \in {[0,T]}$, be of exponential
    order $\beta$. Let $\alpha$ be the least upper bound on the real
    part of the spectrum of $(E,A)$, and define $\sigma =
    \max\{\alpha,\beta\}$.
    The waveform relaxation method \eqref{eq:waveform} is convergent
    if
  \begin{equation}
    \rho \left( ((\sigma + j \omega) E-A_D-GC)^{-1} A_C  \right) \!< 1
    \text{ for all }\omega \in\! \real
    .
    \label{eq:waveform_convergence_condition}
  \end{equation}
\end{lemma}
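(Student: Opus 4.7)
The plan is to work with the error $\epsilon^{(k)}(t) = w^{(k)}(t) - w(t)$ between the $k$-th iterate and the limit trajectory $w(t)$ satisfying \eqref{eq:detection_filter_decentralized_vector_notation}. Subtracting the two descriptor equations eliminates the driving term $-Gy(t)$ and gives
\begin{equation*}
E\dot\epsilon^{(k)}(t) = (A_D + GC)\epsilon^{(k)}(t) + A_C \epsilon^{(k-1)}(t),
\end{equation*}
with $\epsilon^{(k)}(0)=0$ for all $k$ since both iterate and limit share the same consistent initial condition $w_0$. Since $y$ is of exponential order $\beta$ and $(E,A_D+GC)$ is regular with spectral abscissa at most $\alpha$, both $w^{(k)}$ and $w$ (hence $\epsilon^{(k)}$) are of exponential order $\sigma=\max\{\alpha,\beta\}$, so the weighted signals $\tilde\epsilon^{(k)}(t)=\epsilon^{(k)}(t)e^{-\sigma t}$ admit well-defined Laplace transforms on the imaginary axis.

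Next I would pass to the Laplace domain along the vertical line $s=\sigma+j\omega$. The recursion becomes the purely algebraic, frequency-parametrized contraction
\begin{equation*}
\hat\epsilon^{(k)}(\sigma + j\omega) \;=\; \mathcal{H}(\sigma+j\omega)\,\hat\epsilon^{(k-1)}(\sigma+j\omega),
\end{equation*}
where $\mathcal{H}(s)=(sE-A_D-GC)^{-1}A_C$ is well-defined for $s=\sigma+j\omega$ by regularity of $(E,A_D+GC)$ and the choice of $\sigma$ beyond the spectral abscissa. Iterating gives $\hat\epsilon^{(k)}(\sigma+j\omega)=\mathcal{H}(\sigma+j\omega)^k \hat\epsilon^{(0)}(\sigma+j\omega)$.

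The crucial step is to convert the pointwise-in-$\omega$ spectral radius bound \eqref{eq:waveform_convergence_condition} into a decay of $\|\mathcal{H}(\sigma+j\omega)^k\|$ that is integrable against $\hat\epsilon^{(0)}$. Here I would invoke Gelfand's formula together with the fact that the symbol $\omega\mapsto\mathcal{H}(\sigma+j\omega)$ is continuous on $\mathbb{R}$ and vanishes as $|\omega|\to\infty$ (since $A_C$ is a bounded matrix and $(sE-A_D-GC)^{-1}$ decays at infinity on non-impulsive modes), so that $\sup_{\omega\in\mathbb{R}}\rho(\mathcal{H}(\sigma+j\omega))<1$ is attained. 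A standard operator-norm argument then yields geometric decay of $\|\mathcal{H}(\sigma+j\omega)^k\|$ uniformly in $\omega$, and inverting the Laplace transform and undoing the exponential weighting gives
\begin{equation*}
\max_{t\in[0,T]} \|\epsilon^{(k)}(t)\|_\infty \;\leq\; C\, e^{\sigma T} q^k \to 0, \quad q<1,
\end{equation*}
which is the claimed convergence.

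The main obstacle I anticipate is the last step, namely moving from a pointwise-in-frequency spectral radius bound to a uniform operator bound that actually controls the time-domain $L^\infty$ norm on $[0,T]$. This is where the \emph{descriptor} nature of the system complicates matters: one must check that $(sE-A_D-GC)^{-1}A_C$ has no polynomial growth in $s$ (guaranteed by regularity plus the block-diagonal structure of $E$ and $A_D+GC$ from assumptions (A4)--(A5)) so that the supremum over $\omega$ is finite and $\mathcal{H}(\sigma+j\omega)^k$ belongs to a Banach algebra in which Gelfand's formula can be applied uniformly. Modulo this technical point, the argument reduces to the fixed-point analysis of \cite[Theorem~5.2]{ZZB-XY:10}, which I would cite to conclude.
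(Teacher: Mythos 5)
The paper does not prove this lemma at all: it is quoted verbatim from \cite[Theorem 5.2]{ZZB-XY:10}, and the only hint the paper gives is the remark that the analysis is carried out in the Laplace domain with iteration operator $(sE-A_D-GC)^{-1}A_C$. Your sketch follows exactly that route -- error recursion with zero (consistent) initial data, exponential weighting by $e^{-\sigma t}$, transform along the line $\Re(s)=\sigma$, and contraction of the symbol -- so in strategy you have reconstructed the intended argument rather than found an alternative one.

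There is, however, one genuine gap in the step you yourself flag as the crucial one. You assert that $(sE-A_D-GC)^{-1}$ ``decays at infinity on non-impulsive modes,'' and use this to conclude that $\sup_{\omega\in\real}\rho\bigl(\mathcal H(\sigma+j\omega)\bigr)<1$ is attained. For a descriptor system this is false as stated: writing the regular pencil $(E,A_D+GC)$ in Weierstrass form, the resolvent splits into a proper part $(sI-J)^{-1}$, which does decay, and an algebraic part $(sN-I)^{-1}=-(I+sN+s^2N^2+\cdots)$, which tends to a \emph{nonzero constant} when the index is one and grows \emph{polynomially} in $s$ when the index exceeds one. Consequently $\mathcal H(\sigma+j\omega)$ need not vanish as $|\omega|\to\infty$; the pointwise condition \eqref{eq:waveform_convergence_condition} must be supplemented by control of the limit symbol $\mathcal H(\infty)$ (or, equivalently, one must compactify the frequency axis and check the spectral radius at $\omega=\pm\infty$ as well) before the supremum can be shown to be strictly less than one. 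This is precisely where the descriptor structure does real work in \cite[Theorem 5.2]{ZZB-XY:10}, and it is also where the subsequent compactness argument lives: on the closure of the range of $\omega\mapsto\mathcal H(\sigma+j\omega)$ including the point at infinity, uniform spectral radius $<1$ yields constants $C$ and $q<1$ with $\|\mathcal H(\sigma+j\omega)^k\|\le Cq^k$ uniformly in $\omega$, after which your inverse-transform estimate (which additionally needs integrability of $\hat{\tilde\epsilon}^{(0)}$ along the line, supplied by the smoothness in the definition of exponential order) goes through. So the outline is right, but the ``decays at infinity'' claim must be replaced by an explicit treatment of the algebraic/impulsive part of the pencil.
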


In the reasonable case of bounded (integrable) measurements $y(t)$, $t
\in {[0,T]}$, and stable filter dynamics, we have that $\sigma \le 0$,
and the convergence condition
\eqref{eq:waveform_convergence_condition} for the waveform relaxation
iteration \eqref{eq:waveform} equals the condition
\eqref{eq:decentralized_stabilization_condition} for decentralized
stabilization of the filer dynamics. We now propose our distributed
attack detection filter.
\begin{theorem}{{\bf\emph{(Distributed attack detection filter)}}}
\label{Theorem: Distributed Detection}
Consider the descriptor system \eqref{eq: cyber_physical_fault} and
assume that the attack set $K$ is detectable, and that the network
initial state $x(0)$ is known. Let assumptions (A1) through (A7) be
satisfied and consider the {\em distributed attack detection filter}
 \begin{align}
   E \dot w^{(k)} (t)&= \bigl(A_D + GC \bigr) w^{(k)}(t) + A_C w^{(k-1)}(t) - G y(t) \,,\nonumber\\
   r(t) &= y(t) - C w^{(k)}(t) \,,
  \label{eq:distributed_filter}
  \end{align}
  where $k \in \mathbb N$, $t \in {[0,T]}$ for some $T>0$, $w^{(k)}(0)
  = x(0)$ for all $k \in \mathbb N$, and $G = \blkdiag(G_1,\dots,G_N)$
  is such that the pair $(E,A_{D}+GC)$ is regular, Hurwitz, and
\begin{equation}
  \rho \left( (j \omega E-A_D-GC)^{-1} A_C  \right) < 1
  \text{ for all }\omega \in \real \,.
    \label{eq:main_condition}
  \end{equation}
  Then $\lim_{k \to \infty} r^{(k)} (t) = 0$ at all times $t \in
  {[0,T]}$ if and only if $u_{K}(t) = 0$ at all times $t \in {[0,T]}$.
  Moreover, in the absence of attacks, the asymptotic filter error $\lim_{k \to
    \infty} (w^{(k)}(t) - x(t))$ is exponentially stable for $t \in
  {[0,T]}$.
\end{theorem}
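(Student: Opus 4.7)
The plan is to reduce Theorem \ref{Theorem: Distributed Detection} to a direct combination of the two previous lemmas: Lemma \ref{Lemma:decentralized_estimation} characterizes the residual and error of the non-iterative decentralized filter \eqref{eq:detection_filter_decentralized_vector_notation}, while Lemma \ref{Lemma:waveform_condition} guarantees that the iterates $w^{(k)}(t)$ produced by \eqref{eq:distributed_filter} converge, uniformly on $[0,T]$, to the trajectory $w(t)$ of that same non-iterative filter. Once these two pieces are put together, the statements about $\lim_{k\to\infty} r^{(k)}(t)$ and the asymptotic error follow by continuity.

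First I would verify that the hypotheses of Lemma \ref{Lemma:waveform_condition} are met. Regularity of $(E, A_D + GC)$ is explicitly assumed, and the initial condition $w^{(k)}(0) = x(0)$ is consistent by assumption (A2) together with block-diagonality of $E$ from (A4). The measurement signal $y(t)$ on the compact interval $[0,T]$ is of some exponential order $\beta$, since it is generated by the smooth input (A3) acting through the descriptor model \eqref{eq: cyber_physical_fault}. Because $(E, A_D + GC)$ is Hurwitz, the constant $\alpha$ in Lemma \ref{Lemma:waveform_condition} is strictly negative; I would note that we may assume $\sigma = \max\{\alpha,\beta\} \le 0$ (absorbing the $y(t)$ behaviour on a finite horizon into a shifted frame if needed), so that the spectral-radius assumption \eqref{eq:main_condition} implies the waveform convergence condition \eqref{eq:waveform_convergence_condition}. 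Lemma \ref{Lemma:waveform_condition} then yields
\begin{equation*}
\lim_{k \to \infty} \max_{t \in [0,T]} \bigl\| w^{(k)}(t) - w(t) \bigr\|_\infty = 0,
\end{equation*}
where $w(t)$ solves \eqref{eq:detection_filter_decentralized_vector_notation} with $w(0) = x(0)$.

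Second, I would apply Lemma \ref{Lemma:decentralized_estimation} to this limit trajectory. Condition \eqref{eq:main_condition} is exactly \eqref{eq:decentralized_stabilization_condition}, and detectability of $K$ plus the initial condition $w(0) = x(0)$ are inherited from the theorem's hypotheses. Thus the residual $r(t) = y(t) - Cw(t)$ of the non-iterative filter vanishes identically on $\mathbb R_{\ge 0}$ if and only if $u_K(t) \equiv 0$, and in the attack-free case $w(t) - x(t)$ decays exponentially. Since $r^{(k)}(t) = y(t) - C w^{(k)}(t)$ depends continuously on $w^{(k)}(t)$, the uniform convergence of the iterates on $[0,T]$ transfers directly to $\lim_{k\to\infty} r^{(k)}(t) = r(t)$ uniformly on $[0,T]$, and the equivalence $\lim_{k\to\infty} r^{(k)}(t) = 0 \iff u_K(t) = 0$ on $[0,T]$ follows. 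The exponential stability of $\lim_{k\to\infty} (w^{(k)}(t) - x(t)) = w(t) - x(t)$ likewise transfers.

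The only genuine subtlety I anticipate is the interplay between the finite horizon $[0,T]$ used by the waveform relaxation and the infinite-horizon statements of Lemma \ref{Lemma:decentralized_estimation}: care must be taken to observe that the conclusions of Lemma \ref{Lemma:decentralized_estimation} restricted to $[0,T]$ still give the required residual equivalence on that interval, and that the exponential decay of $w(t) - x(t)$ is a property of the underlying non-iterative dynamics rather than of the iteration. Aside from this bookkeeping, the proof is essentially a one-line composition of the two lemmas via uniform convergence of $w^{(k)}$ to $w$.
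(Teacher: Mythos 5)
Your proposal is correct and follows essentially the same route as the paper: invoke Lemma \ref{Lemma:waveform_condition} (with $\sigma=0$, since $y$ is bounded on $[0,T]$ and $(E,A_D+GC)$ is Hurwitz) to get uniform convergence of $w^{(k)}$ to the trajectory $w$ of the non-iterative filter \eqref{eq:detection_filter_decentralized_vector_notation}, then apply Lemma \ref{Lemma:decentralized_estimation} (whose zero-dynamics argument comes from Theorem \ref{Theorem: Centralized attack detection filter}) to transfer the residual equivalence and the error stability to the limit. The only cosmetic difference is that the paper fixes $\sigma=0$ outright so that \eqref{eq:waveform_convergence_condition} and \eqref{eq:main_condition} coincide exactly, rather than arguing via $\sigma\le 0$.
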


\begin{proof}
  Since $w^{(k)}(0) = x(0)$, it follows from Lemma
  \ref{Lemma:waveform_condition} that the solution $w^{(k)}(t)$ of the
  iteration \eqref{eq:distributed_filter} converges, as $k \to
  \infty$, to the solution $w(t)$ of the non-iterative filter dynamics
  \eqref{eq:detection_filter_decentralized_vector_notation} if
  condition \eqref{eq:waveform_convergence_condition} is satisfied
  with $\sigma = 0$ (due to integrability of $y(t)$, $t \in {[0,T]}$,
  and since the pair $(E,A_{D}+GC)$ is Hurwitz). The latter condition
  is equivalent to condition \eqref{eq:main_condition}.

  Under condition \eqref{eq:main_condition} and due to the Hurwitz
  assumption, it follows from Lemma
  \ref{Lemma:decentralized_estimation} that the error $e(t) = w(t) -
  x(t)$ between the state $w(t)$ of the decentralized filter dynamics
  \eqref{eq:detection_filter_decentralized_vector_notation} and the
  state $x(t)$ of the descriptor model \eqref{eq:
    cyber_physical_fault} is asymptotically stable in the absence of
  attacks.
  Due to the detectability assumption and by 
   reasoning analogous to the proof of Theorem \ref{Theorem:
    Centralized attack detection filter}, it follows that the error
  dynamics $e(t)$ have no invariant zeros. This concludes the proof of
  Theorem \ref{Theorem: Distributed Detection}.
\end{proof}

\begin{remark}{\bf\emph{(Distributed attack detection)}}
\label{Remark: Distributed implementation}
  The waveform relaxation iteration \eqref{eq:waveform} can be
  implemented in the following distributed fashion. Assume that each
  control center $i$ is able to numerically integrate the descriptor system
  \begin{align}\label{eq:local_filter_iteration}
    \begin{split}
    E_i \dot w^{(k)}_i(t) 
    =& 
    (A_i + G_i C_i) w^{(k)}_i(t) 
    \\ &+  
    \sum_{j \in \mc N_i^\textup{in}} A_{ij} w^{(k-1)}_j(t) - G_i y_i(t) \,, 
  \end{split}
  \end{align}
  over a time interval $t \in {[0,T]}$, with initial condition
  $w_{i}^{(k)} (0) = w_{i,0}$, measurements $y_{i}(t)$, and the
  neighboring filter states $w^{(k-1)}_j(t)$ as external inputs. Let
  $w_j^{(0)} (t)$ be an initial guess of the signal $w_j(t)$. Each
  control center $i \in \until N$ performs the following operations assuming $k = 0$ at start:
  \renewcommand{\theenumi}{(\arabic{enumi})}
  \begin{enumerate}
  \item set $k := k+1$, and compute the signal $w_i^{(k)} (t)$ by integrating the local filter equation 
    \eqref{eq:local_filter_iteration},
  \item transmit $w_i^{(k)} (t)$ to the $j$-th control center if $j
    \in \mc N_i^\textup{out}$
  \item update the input $w_j^{(k)}$ with the signal received from the
    $j$-th control center, with $j \in \mc N_i^\textup{in}$, and iterate.
  \end{enumerate}
  If the waveform relaxation is convergent, then, for $k$ sufficiently
  large, the residuals $r_i^{(k)}(t) = y_i(t) - C_i w^{(k)}_i(t)$ can
  be used to detect attacks; see Theorem \ref{Theorem: Distributed
    Detection}.
  In summary, our distributed attack detection scheme requires
  integration capabilities at each control center, knowledge of the
  measurements $y_{i}(t)$, $t \in {[0,T]}$, as well as synchronous
  discrete-time communication between neighboring control centers.
  \oprocend
\end{remark}

\begin{remark}{\bf\emph{(Distributed filter design)}}
\label{Remark: Distributed filter design}
As discussed in Remark \ref{Remark: Distributed implementation}, the
filter \eqref{eq:distributed_filter} can be implemented in a
distributed fashion. In fact, it is also possible to design the filter
\eqref{eq:distributed_filter}, that is, the output injections $G_{i}$,
in an entirely distributed way.
Since $\rho(A) \le \| A \|_p$ for any matrix $A$ and any induced
$p$-norm, condition \eqref{eq:main_condition} can be relaxed by the
small gain criterion to
\begin{equation}
  \bigl\| (j \omega E-A_D-GC)^{-1}  A_C  \bigr\|_{p} < 1
  \text{ for all }\omega \in \real
  \label{eq:main_condition_small_gain}
  \,.
\end{equation}
With $p = \infty$,
in order to satisfy condition \eqref{eq:main_condition_small_gain}, it
is sufficient for each control center $i$ to verify the following {\em
  quasi-block diagonal dominance} condition \cite{YO-DS-TM:86}
  for each $\omega \in \real$:
\begin{equation}
  \Bigl\| (j \omega  E_{i}-A_{i}-G_{i}C_{i})^{-1} \sum\nolimits_{j=1,j \neq i}^{n} A_{ij}  \Bigr\|_{\infty} < 1
	\label{eq:main_condition_block-diagonal-dominance}.
\end{equation}
Note that condition \eqref{eq:main_condition_block-diagonal-dominance}
can be checked with local information, and it is a
conservative relaxation of condition \eqref{eq:main_condition}.
\oprocend
\end{remark}

\subsection{Illustrative example of decentralized detection}

\begin{figure}[t]
    \centering
    \includegraphics[width=1\columnwidth]{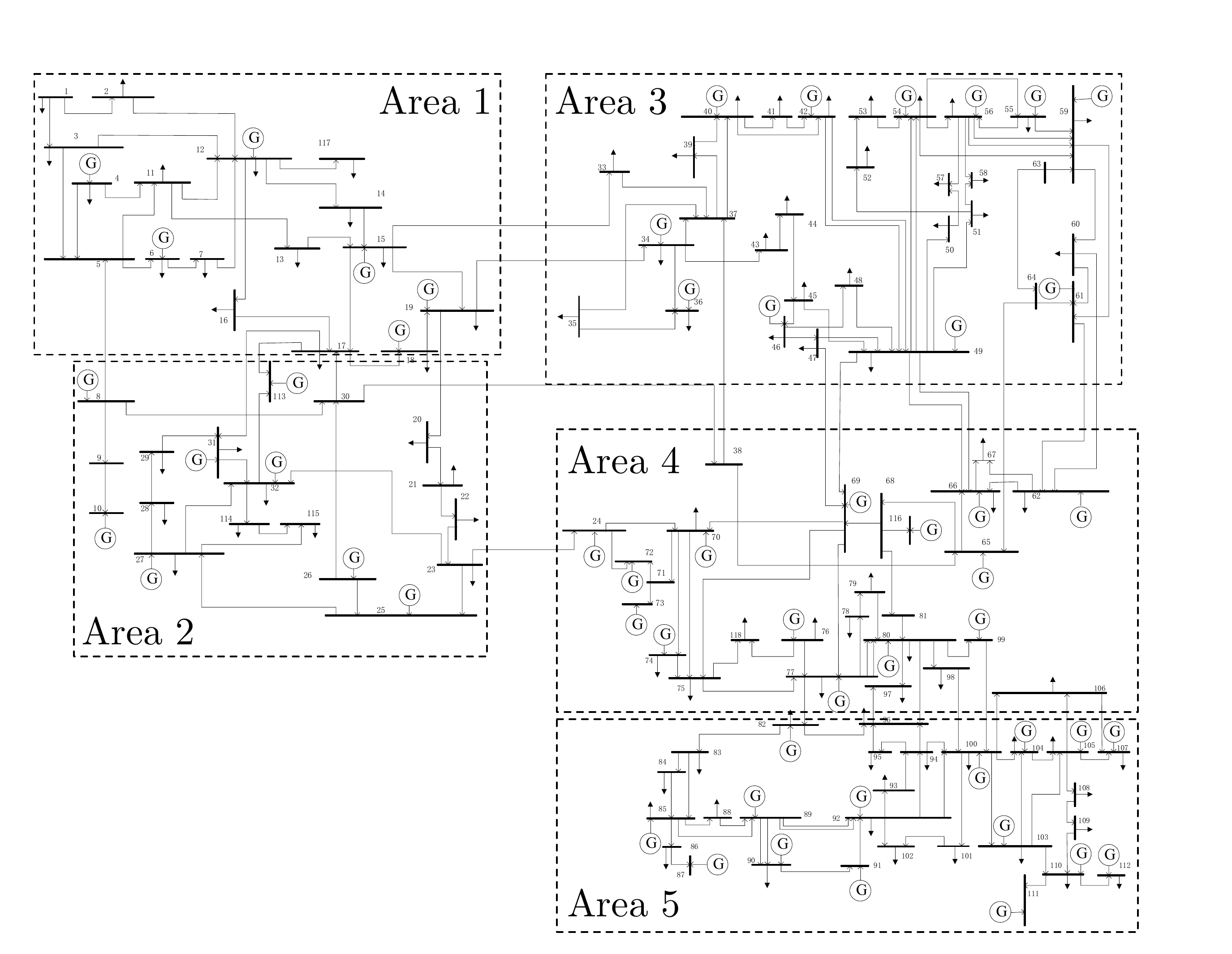}
    \caption{Partition of IEEE 118 bus system into $5$ areas. Each
      area is monitored and operated by a control center. The control
      centers cooperate to estimate the state and to assess the
      functionality of the whole network.}
    \label{fig:118_part}
\end{figure}

The IEEE 118 bus system shown in Fig. \ref{fig:118_part} represents a portion
of the Midwestern American Electric Power System as of December 1962. This test
case system is composed of 118 buses and 54 generators, and its parameters can be found, for
 example, in \cite{rdz-cem-dg:11}.
Following \cite[Section II.C]{FP-FD-FB:12a}, a linear continuous-time
descriptor model of the network dynamics under attack assumes the form
\eqref{eq: cyber_physical_fault}.

For estimation and attack detection purposes, we partition the IEEE
118 bus system into $5$ disjoint areas, we assign a control center to
each area, and we implement our detection procedure via the filter
\eqref{eq:distributed_filter}; see Fig. \ref{fig:118_part} for a
graphical illustration. Suppose that each control center continuously
measures the angle of the generators in its area, and suppose that an
attacker compromises the measurements of all the generators of the
first area. In particular, starting at time $30$s, the attacker adds a
signal $u_{K}(t)$ to all measurements in area 1. It can be verified
that the attack set $K$ is detectable, see
\cite{FP-FD-FB:12a}. According to assumption (A3), the attack signal
$u_{K}(t)$ needs to be continuous to guarantee a continuous state
trajectory (since the power network is a descriptor system of index 1). In
order to show the robustness of our detection filter
\eqref{eq:distributed_filter}, we let $u_{K}(t)$ be randomly
distributed in the interval $[0,0.5] \mathrm{\, rad}$.

The control centers implement the distributed attack detection
procedure described in \eqref{eq:distributed_filter}, with $G =
AC^\transpose$. It can be verified that the pair $(E,A_D + GC)$ is
Hurwitz stable, and that $\rho \left( j\omega E-A_D-GC)^{-1} A_C
\right) < 1$ for all $\omega \in \real$. As predicted by Theorem
\ref{Theorem: Distributed Detection}, our distributed attack detection
filter is convergent; see Fig. \ref{fig:detection_wf}.
\begin{figure}
  \centering
  \includegraphics[width=0.95\columnwidth]{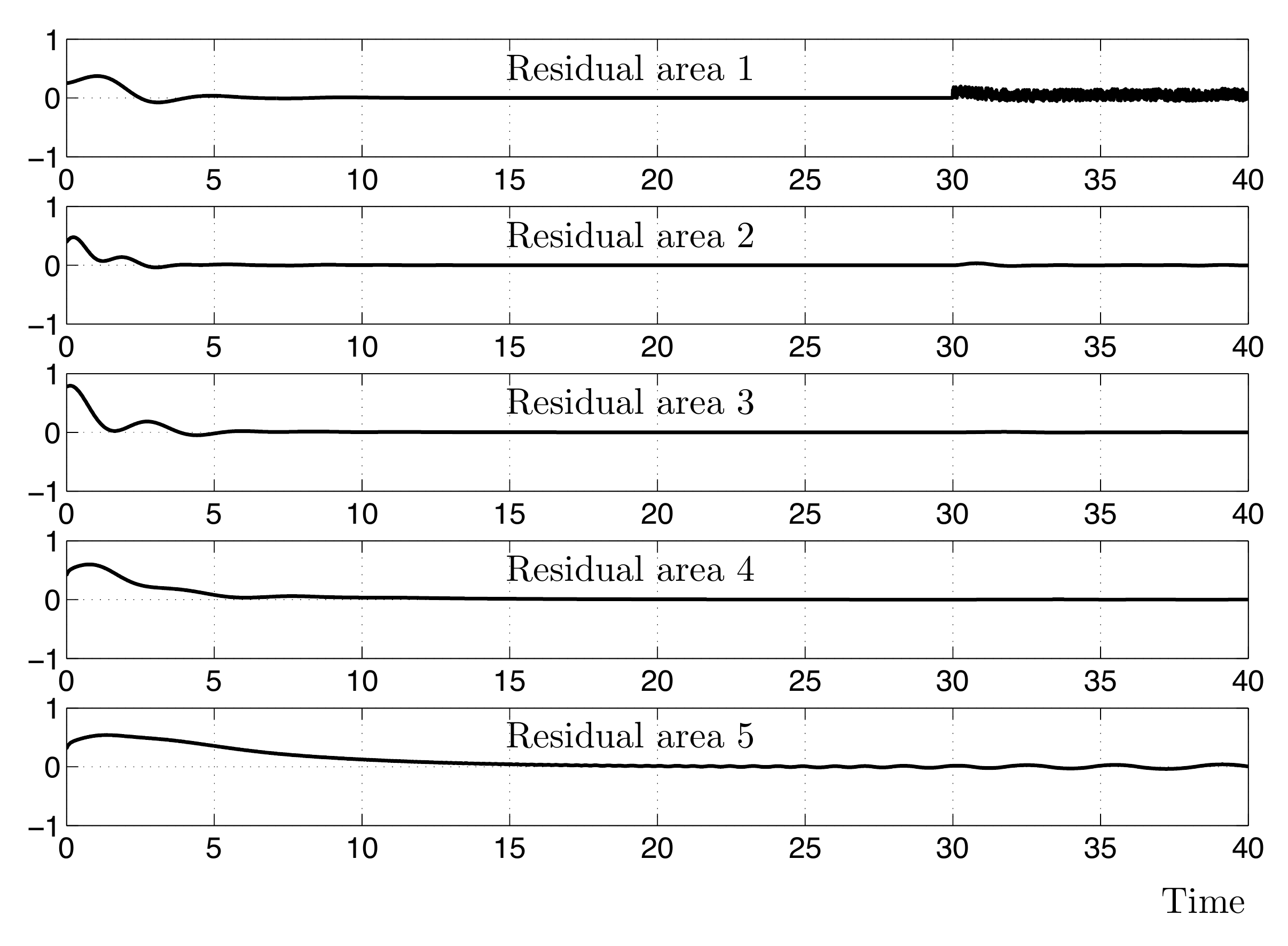}
  \caption{In this figure we show the residual functions computed
    through the distributed attack detection filter
    \eqref{eq:distributed_filter}. The attacker compromises the
    measurements of all the generators in area 1 from time $30$ with a
    signal uniformly distributed in the interval $[0, 0.5]$. The
    attack is correctly detected, because the residual functions do
    not decay to zero. For the simulation, we run $k=100$ iterations of
    the attack detection method.}
  \label{fig:detection_wf}
\end{figure}
For completeness, in Fig. \ref{fig:wf_error} we illustrate the
convergence of our waveform relaxation-based filter as a function of
the number of iterations $k$. Notice that the number of iterations
directly reflects the communication complexity of our detection
scheme.
\begin{figure}
    \centering
    \includegraphics[width=0.9\columnwidth]{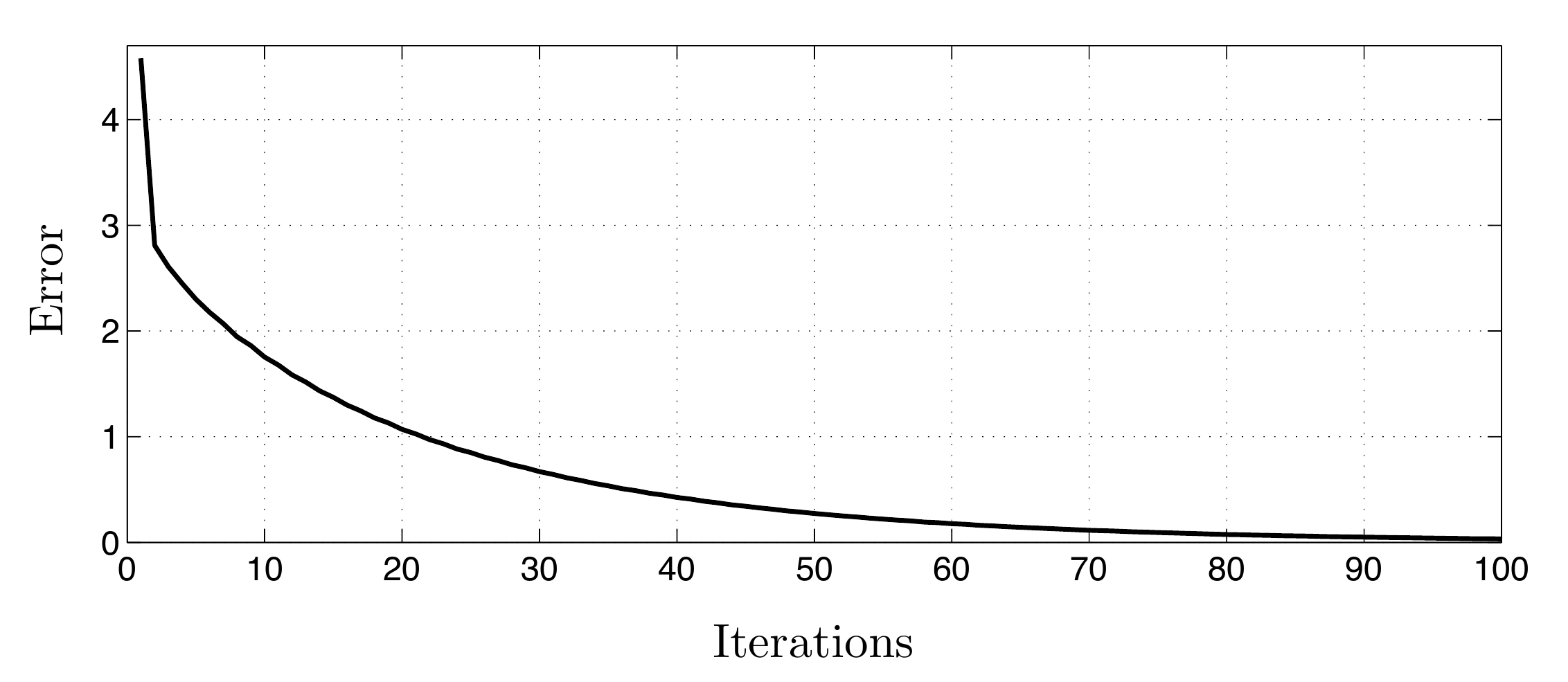}
    \caption{The plot represents the error of our waveform relaxation
      based filter \eqref{eq:distributed_filter} with respect to the
      corresponding decentralized filter
      \eqref{eq:detection_filter_decentralized_vector_notation}. Here
      the error is $\max_{t \in [0,T]} \bigl\| w^{(k)} (t) - w(t)
      \bigr\|_{\infty}$, that is, the worst-case difference of the
      outputs of the two filters. As predicted by Theorem
      \ref{Theorem: Distributed Detection}, the error is convergent.}
    \label{fig:wf_error}
\end{figure}



\section{Monitor design for attack identification}\label{sec:identification}

\subsection{Complexity of the attack identification problem}
In this section we study the problem of attack identification, that
is, the problem of identifying from measurements the state and output
variables corrupted by the attacker.  We start our discussion by
showing that this problem is generally \emph{NP-hard}. For a vector
$x \in \mathbb R^{n}$, let $\text{supp}(x) = \{ i \in \until{n}:\, x_i
\neq 0\}$, let $\|x\|_{\ell_0} = |\text{supp}(x)|$ denote the number of
non-zero entries, and for a vector-valued signal $v:\, \mathbb{R}_{\ge
  0} \to \mathbb R^{n}$, let $\| v \|_{\mc L_{0}} = | \cup_{t \in \mathbb
  R_{\geq 0}} \text{supp}(v(t)) |$.  We consider the following
cardinality minimization problem: given a descriptor system with
dynamic matrices $E,A \in \mathbb R^{n \times n}$, measurement matrix
$C\in \mathbb R^{p \times n}$, and measurement signal $y:\,
\mathbb{R}_{\ge 0} \to \mathbb R^{p}$, find the minimum cardinality input signals
$v_{x}:\, \mathbb{R}_{\ge 0} \to \mathbb R^{n}$ and $v_{y}:\,
\mathbb{R}_{\ge 0} \to \mathbb R^{p}$ and an arbitrary initial condition
$\xi_{0}\in \mathbb{R}^n$ that explain the data $y(t)$,
that is,
\begin{align}\label{prob:norm_zero}
\begin{array}{ll}
  \min\limits_{v_x, \,v_y ,\, \xi_{0}} &  \Biggl.\| v_x \|_{\mc L_{0}} + \| v_y\|_{\mc L_{0}}\\
  \textup{subject to} & E \dot \xi(t) = A \xi(t) + v_x(t),\\
  & y(t) = C\xi(t) + v_y(t),\\
  & \xi(0) = \xi_{0} \in \mathbb R^{n} \,.
\end{array}
\end{align}

\begin{lemma}{\bf\emph{(Problem
      equivalence)}}\label{lemma:equivalence_problems}
  Consider the system \eqref{eq: cyber_physical_fault} with
  identifiable attack set $K$. The optimization problem
  \eqref{prob:norm_zero} coincides with the problem of identifying the
  attack set $K$ given the system matrices $E$, $A$, $C$, and the
  measurements\,$y(t)$, where $K = \text{supp}([v_x^\transpose \;
  v_y^\transpose])$.
\end{lemma}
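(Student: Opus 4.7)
The plan is to establish a bijection between feasible points of the cardinality program \eqref{prob:norm_zero} and triples (attack set, attack mode, initial condition) that produce the observed output $y(t)$, with objective value equal to the cardinality of the attack set. Identifiability of $K$ (Definition \ref{unidentifiable_input}) then forces the optimizer to recover $K$ uniquely.

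In the forward direction, I would take any feasible $(v_x, v_y, \xi_0)$ of \eqref{prob:norm_zero} and set
\[
  K' = \operatorname{supp}\!\bigl([v_x^\transpose\; v_y^\transpose]\bigr), \qquad u_{K'}(t) = [v_x^\transpose(t)\; v_y^\transpose(t)]^\transpose_{K'}.
\]
By construction, $B_{K'} u_{K'}(t) = v_x(t)$ and $D_{K'} u_{K'}(t) = v_y(t)$ because $B$ and $D$ are $[I,0]$ and $[0,I]$, so the triple $(\xi_0, K', u_{K'})$ is an attack that generates exactly the measured $y(t)$, and the objective equals $|K'|$. Conversely, any attack triple $(x_0, R, u_R)$ that explains $y(t)$ yields a feasible point of \eqref{prob:norm_zero} by letting $v_x = B_R u_R$, $v_y = D_R u_R$, $\xi_0 = x_0$; its objective is $|R|$ provided each entry of $u_R$ is nonzero at some time (which we may assume, since otherwise the attack is represented over a strictly smaller set).

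Next I would invoke the hypothesis that $K$ is identifiable to conclude. The true attack $(x(0), K, u_K)$ provides one feasible point with objective $|K|$, so the optimal value is at most $|K|$. If there were another feasible triple $(\xi_0', R', u_{R'})$ with $|R'| \le |K|$ and $R' \ne K$, then by Definition \ref{unidentifiable_input} with $x_K = x(0)$, $x_R = \xi_0'$ this would render $K$ unidentifiable, a contradiction. Hence the optimum of \eqref{prob:norm_zero} equals $|K|$ and every minimizer satisfies $\operatorname{supp}([v_x^\transpose\; v_y^\transpose]) = K$, which is precisely the attack identification problem.

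The main conceptual subtlety is aligning the $\mc L_0$ functional, which counts coordinates that are nonzero at some time, with the set-theoretic notion of attack set used in Definition \ref{unidentifiable_input}; the bijection above handles this, but I would state explicitly that $v_x,v_y$ may be redundant (contain a coordinate that is identically zero), in which case one simply restricts to its true support to recover a genuine attack and match the cardinalities. No substantive technical obstacle beyond this bookkeeping is expected, since the equivalence is structural and relies only on the definitions of attack, attack signature, and identifiability.
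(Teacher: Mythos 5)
Your proposal is correct and follows essentially the same route as the paper's (much terser) proof: both reduce the identification problem, via identifiability and the fact that $B=[I,0]$ and $D=[0,I]$ give $(B_K u_K, D_K u_K)=(v_x,v_y)$, to finding the minimum-cardinality input explaining $y(t)$. Your version simply spells out the bijection and the uniqueness-of-minimizer argument that the paper leaves implicit.
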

\begin{pf}
  Due to the identifiability of $K$, the attack identification problem
  consists of finding the smallest attack set capable of injecting an
  attack $(B_K u_K, D_K u_K)$ that generates the given measurements
  $y$ for the given dynamics $E$, $A$, $C$, and some initial
  condition; see Definition \ref{unidentifiable_input}. The statement
  follows since $B = [I,0]$ and $D = [0,I]$ in \eqref{eq:
    cyber_physical_fault}, so that $(B_K u_K, D_K u_K) = (v_x,v_y)$.
\end{pf}


As it turns out, the optimization problem \eqref{prob:norm_zero}, or
equivalently our identification problem, is generally \emph{NP-hard}
\cite{MRG-DSJ:79}.
\begin{corollary}{\bf\emph{(Complexity of the attack identification
      problem)}}\label{corollary:complexity}
  Consider the system \eqref{eq: cyber_physical_fault} with
  identifiable attack set $K$. The attack identification problem given
  the system matrices $E$, $A$, $C$, and the measurements $y(t)$ is
  NP-hard.
\end{corollary}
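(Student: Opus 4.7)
The plan is to leverage Lemma \ref{lemma:equivalence_problems}, which already reduces attack identification (on identifiable instances) to solving the cardinality program \eqref{prob:norm_zero}. It therefore suffices to exhibit a polynomial-time reduction from a classically NP-hard problem to \eqref{prob:norm_zero}. I would reduce from the \emph{minimum-support (nearest-codeword) problem}: given $H \in \real^{p \times n}$ and $b \in \real^{p}$, compute a minimizer of $\|b - H\xi\|_{\ell_0}$ over $\xi \in \real^{n}$. This is NP-hard by \cite{MRG-DSJ:79}.

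Given an instance $(H,b)$, I would construct the following instance of \eqref{prob:norm_zero}: set $E = I_n$, $A = 0_{n \times n}$, measurement matrix $C := H$, and constant measurement $y(t) \equiv b$. The pair $(E,A)$ is regular because $\det(sE-A) = s^{n}$, and the reduction is polynomial in the size of $(H,b)$. Under this data the dynamic constraint becomes $\dot\xi(t) = v_{x}(t)$, while the output constraint forces $v_{y}(t) = b - C\xi(t)$ pointwise in $t$.

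The crux of the argument is to show that the optimum of this instance of \eqref{prob:norm_zero} equals $\min_{\xi} \|b - H\xi\|_{\ell_{0}}$. For the upper bound, pick $\xi^{\star} \in \argmin_{\xi} \|b - H\xi\|_{\ell_{0}}$ and use the feasible triple $v_{x} \equiv 0$, $\xi_{0} = \xi^{\star}$, $v_{y} \equiv b - H\xi^{\star}$, which attains cost $\|b - H\xi^{\star}\|_{\ell_{0}}$. For the lower bound, for any feasible trajectory I would observe that
\begin{equation*}
\|v_{y}\|_{\mc L_{0}} \,=\, \Bigl| \bigcup\nolimits_{t \ge 0} \text{supp}\bigl(b - C\xi(t)\bigr) \Bigr| \,\ge\, \|b - C\xi(0)\|_{\ell_{0}} \,\ge\, \min_{\xi} \|b - H\xi\|_{\ell_{0}} \,,
\end{equation*}
so the total cost $\|v_{x}\|_{\mc L_{0}} + \|v_{y}\|_{\mc L_{0}}$ cannot fall below the NP-hard quantity. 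Combined with Lemma \ref{lemma:equivalence_problems}, this yields NP-hardness of attack identification.

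The main obstacle I anticipate is satisfying the identifiability hypothesis on $K$ demanded by the corollary, since the reduction must produce identifiable instances. I expect this to be a technicality rather than a substantive difficulty: by a generic perturbation of $b$ (or by appending columns that force a unique minimum-support representation), the optimizer $\xi^{\star}$ and the induced attack set $K = \text{supp}(b - H\xi^{\star})$ become the unique minimizer, so $K$ is identifiable per Definition \ref{unidentifiable_input}. Since NP-hardness of the nearest-codeword problem persists under such genericity restrictions, the reduction carries through unchanged.
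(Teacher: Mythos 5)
Your reduction is exactly the one the paper uses: set $E=I$, $A=0$, $C=\bar C$, $y(t)\equiv \bar y$, and observe that since the cost on $v_y$ is the cardinality of the \emph{union over time} of supports, it is never decreased by taking $v_x\equiv 0$, which collapses \eqref{prob:norm_zero} to the NP-hard sparse recovery problem $\min_{\bar\xi}\|\bar y-\bar C\bar\xi\|_{\ell_0}$. The only substantive difference is that you explicitly confront the identifiability hypothesis on $K$ (which the paper's proof silently omits) and patch it with a genericity argument; otherwise the two proofs coincide.
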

\begin{pf}
  Consider the NP-hard \cite{EJC-TT:05} sparse recovery problem
  $\min_{\bar \xi \in \mathbb R^{n}} \| \bar y - \bar C \bar \xi
  \|_{\ell_{0}}$, where $\bar C \in \mathbb R^{p \times n}$ and $\bar
  y \in \mathbb R^{p}$ are given and constant.  In order to prove the
  claimed statement, we show that every instance of the sparse
  recovery problem can be cast as an instance of
  \eqref{prob:norm_zero}. Let $E = I$, $A = 0$, $C = \bar C$, and
  $y(t) = \bar y$ at all times. Notice that $v_y(t) = \bar y - C \xi
  (t)$ and $\xi (t) = \xi(0) + \int_0^t v_x(\tau) d\tau$. The problem
  \eqref{prob:norm_zero} can be written as
  \begin{align}\label{prob:opt2}
    \begin{array}{ll}
      &\min\limits_{v_x, \, \xi}   \;\Biggl.\| v_x \|_{\mc L_{0}} + \|
      \bar y - \bar C \xi (t) \|_{\mc L_{0}}
      \\ &=
      \min\limits_{v_x(t), \, \bar \xi}  \; \Biggl.\| v_x (t)\|_{\mc L_{0}} + \|
      \bar y - \bar C \bar \xi - \bar C\int_0^t v_x(\tau) d\tau\|_{\mc L_{0}},
    \end{array}
  \end{align}
  where $\bar \xi = \xi(0)$. Notice that there exists a minimizer to
  problem \eqref{prob:opt2} with $v_x (t) = 0$ for all $t$. Indeed,
  since $\| \bar y - \bar C \bar \xi - \bar C\int_0^t v_x(\tau)
  d\tau\|_{\mc L_{0}} = |\cup_{t \in \real_{\ge 0}} \text{supp} (\bar
  y - \bar C \bar \xi - \bar C\int_0^t v_x(\tau) d\tau)| \ge |
  \text{supp} (\bar y - \bar C \bar \xi - \bar C\int_0^0 v_x(\tau)
  d\tau)| = \| \bar y - \bar C \bar \xi \|_{\ell_{0}}$, problem
  \eqref{prob:opt2} can be equivalently written as $
  \min\nolimits_{\bar \xi} \| \bar y - \bar C \bar \xi \|_{\ell_{0}}$.
 \end{pf}

By Corollary \ref{corollary:complexity} the general attack
identification problem is combinatorial in nature, and its general
solution will require substantial computational effort. In the next
sections we propose an optimal algorithm with high computational
complexity, and a sub-optimal algorithm with low computational
complexity.
We conclude this section with an example.

\begin{example}{\bf\emph{(Attack identification via $\ell_1$
      regularization)}}\label{example:regularization}
  A classical procedure to handle cardinality minimization problems of
  the form $\min_{v \in \mathbb R^{n}} \| y - Av\|_{\ell_{0}}$ is to
  use the $\ell_1$ regularization $\min_{v \in \mathbb R^{n}} \| y -
  Av\|_{\ell_{1}}$ \cite{EJC-TT:05}. 
  This procedure can be adapted to the optimization problem
  \eqref{prob:norm_zero} after converting it into an algebraic
  optimization problem, for instance by taking subsequent
  derivatives of the output $y(t)$, or by discretizing the
  continuous-time system \eqref{eq: cyber_physical_fault} and recording several measurements.
  As shown in \cite{FH-PT-SD:11}, for discrete-time systems the
  $\ell_{1}$ regularization performs reasonably well in the presence
  of output attacks.  However, in the presence of state attacks such
  an $\ell_{1}$ relaxation performs generally poorly. In what follows,
  we develop an intuition when and why this approach fails.


  \begin{figure}
    \centering
    \includegraphics[width=.9\columnwidth]{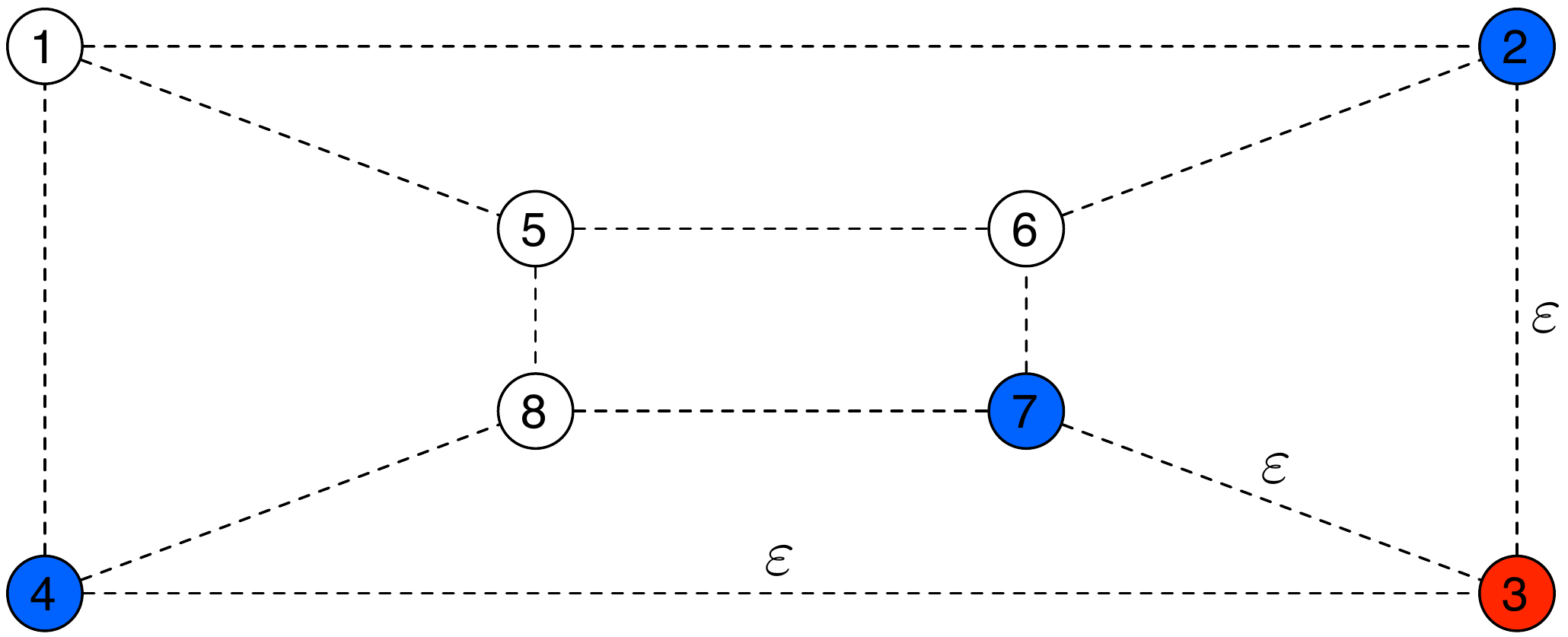}
    \caption{A regular consensus system $(A,B,C)$, where the state
      variable $3$ is corrupted by the attacker, and the state
      variables $2$, $4$, and $7$ are directly measured.  Due to the
      sparsity pattern of $(A,B,C)$ any attack of cardinality one is
      \emph{generically} detectable and identifiable, see
      \cite{FP-FD-FB:12a,FP-AB-FB:09b} for further details.}
    \label{fig:2rombi}
  \end{figure}
  Consider a consensus system with underlying network graph (sparsity
  pattern of $A$) illustrated in Fig. \ref{fig:2rombi}. The dynamics
  are described by the nonsingular matrix $E = I$ and the state matrix
  $A$
  depending on the small parameter $0 < \varepsilon \ll
  1$\,as
\begin{equation*}
  A =
  \left[
    \begin{smallmatrix}
      -0.8  &  0.1   &      0  &  0.2 &   0.5 &        0  &       0 & 0\\
      0.1  &  -0.4-\varepsilon   & \varepsilon  &       0 &        0 &   0.3  &       0 & 0\\
      0  &  3\varepsilon   & -9\varepsilon  &  0&        0 &        0  &  6\varepsilon & 0\\
      0.1  &       0   & \varepsilon  &  -0.5-\varepsilon &        0 &        0  &       0 & 0.4\\
      0.1  &       0   &      0  &       0 &   -0.6 &   0.2  &       0 & 0.3\\
      0  &  0.4   &      0  &       0 &   0.1 &   -0.6  &  0.1 & 0\\
      0  &       0   & 3\varepsilon  &       0 &        0 &   0.4  &  -0.6-3\varepsilon & 0.2\\
      0 & 0 & 0 & 0.3 & 0.2 & 0 & 0.2 & -0.7
    \end{smallmatrix}
  \right].
\end{equation*}
The measurement matrix $C$ and the attack signature $B_{K}$ are 
\begin{equation*}
  C =
  \left[
    \begin{smallmatrix}
      0 & 1 & 0 & 0 & 0 & 0 & 0 &0\\
      0 & 0 & 0 & 1 & 0 & 0 & 0 &0\\
      0 & 0 & 0 & 0 & 0 & 0 & 1 &0
    \end{smallmatrix}
  \right]
  \;,\;
    B_{K}^{\transpose} =
  \left[
    \begin{smallmatrix}
      0 & 0 & 1 & 0 & 0 & 0 & 0 & 0
    \end{smallmatrix}
  \right]\,,
\end{equation*}
and we let $G(s) = C (sI-A)^{-1} B_{K}$.
It can be verified that the state attack $K = \{3\}$ is detectable and
identifiable.

Consider also the state attack $\bar K = \{2,4,7\}$ with signature
\begin{align*}
  B_{\bar K}^\transpose=
  \left[
    \begin{smallmatrix}
      0 & 1 & 0 & 0 & 0 & 0 & 0 &0\\
      0 & 0 & 0 & 1 & 0 & 0 & 0 &0\\
      0 & 0 & 0 & 0 & 0 & 0 & 1 &0
    \end{smallmatrix}
  \right],
\end{align*}
and let $\bar G(s) = C (sI-A)^{-1} B_{\bar K}$. We now adopt the
shorthands $u(t) \!=\! u_{K}(t)$ and $\bar u(t) \!=\!  u_{\bar K}(t)$,
and denote their Laplace transforms by $U(s)$ and $\bar U(s)$,
respectively. Notice that $\bar G(s)$ is right-invertible
\cite{GB-GM:91}. Thus, $Y (s) = G(s) U(s) = \bar G(s) \left( \bar
  G^{-1}(s) G(s) U(s) \right)$. In other words, the measurements
$Y(s)$ generated by the attack signal $U(s)$ can equivalently be
generated by the signal $\bar U(s) = \bar G^{-1}(s) G(s) U(s)$.
Obviously, we have that $\|\bar u\|_{\mc L_{0}} = 3 > \| u \|_{\mc L_{0}} = 1$, that
is, the attack set $K$ achieves a lower cost than 
$\bar K$ in the optimization problem\,\eqref{prob:norm_zero}.

Consider now the numerical realization $\varepsilon = 0.0001$, $x(0) =
0$, and $u(t) = 1$ for all $t \in \mathbb R_{\ge 0}$. The
corresponding attack mode $\bar u(t)$ is shown in
Fig. \ref{fig:magnitude}. Since $|\bar u_{i}(t)| < 1/3$ for $i \in
\{1,2,3\}$ and $t \in \mathbb R_{\geq 0}$, it follows that
$\|u(t)\|_{\ell_p}  \!>\! \|\bar u(t)\|_{\ell_p}$ 
point-wise in time and  
$\|u(t)\|_{\mc L_{q}/\ell_{p}} \!>\!  \|\bar u(t)\|_{\mc
  L_{p}/\ell_{q}}$, where $p,q \geq 1$ and $\|u(t)\|_{\mc
  L_{q}/\ell_{p}} = \bigl( \int_{0}^{\infty} (\sum_{i=1}^{n+p}
|u_{i}(\tau)|^{p})^{q/p} d\tau \bigr)^{1/q}$ is the $\mc L_{q} /
\ell_{p}$-norm. Hence, the attack set $\bar K$ achieves a lower cost
than $K$ for any algebraic version of the optimization
problem \eqref{prob:norm_zero} penalizing a $\ell_{p}$ cost point-wise
in time or a $\mc L_{q} / \ell_{p}$ cost over a time
interval. Since $\|\bar u\|_{\mc L_{0}} \!>\! \| u\|_{\mc L_{0}} $, we
conclude that, in general, the identification problem cannot be solved
by a point-wise $\ell_{p}$ or $\mc L_{q} / \ell_{p}$ regularization
for any $p,q \geq 1$.
\begin{figure}
    \centering
    \includegraphics[width=1\columnwidth]{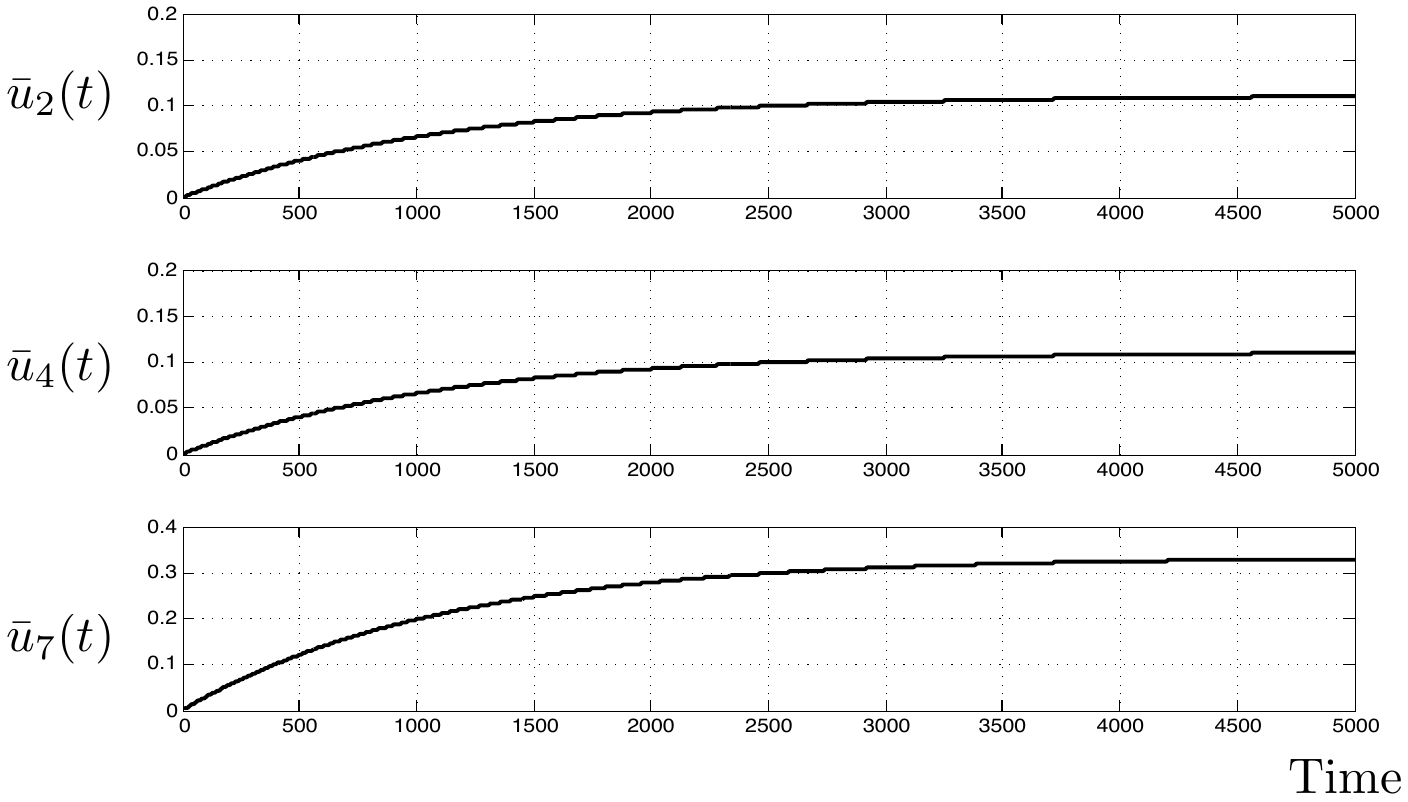}
    \caption{Plot of the attack mode $\bar u (t)$ for the attack set
      $\bar K = \{2,4,7\}$ to generate the same output as the attack
      set $K = \{3\}$ with attack mode $u(t) = 1$. Although $|\bar K|
      > |K|$, we have that $|\bar u_{i}(t)| < |u(t)|/3$ for $i \in
      \{1,2,3\}$.}
    \label{fig:magnitude}
\end{figure}

Notice that, for any choice of network parameters, a value of
$\varepsilon$ can be found such that a point-wise $\ell_{p}$ or a $\mc
L_{q} / \ell_{p}$ regularization procedure fails at identifying the
attack set. Moreover, large-scale stable systems often exhibit this
behavior independently of the system parameters. This can be easily
seen in discrete-time systems, where a state attack with attack set
$K$ affects the output via the matrix $CA^{r-1}B_{K}$, where $r$ is
the relative degree of $(A,B_{K},C)$. Hence, if $A$ is Schur stable
and thus $\lim_{k \to \infty}A^{k}=0$, then $CA^{r-1}B_{K}$ converges
to the zero matrix for increasing relative degree. In this case, an
attack closer to the sensors may achieve a lower $\mc L_{q} /
\ell_{p}$ cost than an attack far from sensors independently of the
cardinality of the attack set. In short, the $\epsilon$-connections in
Fig. \ref{fig:2rombi} can be thought of as the effect of a large
relative degree in a stable system.  \oprocend
\end{example}


\subsection{Centralized attack identification monitor
  design}\label{sec:identification_centralized}
As previously shown, unlike the detection case, the identification of
the attack set $K$ requires a combinatorial procedure, since, a
priori, $K$ is one of the $\binom{n+p}{|K|}$ possible attack sets. The
following centralized attack identification procedure consists of
designing a residual filter to determine whether a predefined set
coincides with the attack set.
The design of this residual filter consists of three steps -- an input
output transformation (see Lemma \ref{Lemma: removal_of_D}), a state
transformation to a suitable conditioned-invariant subspace (see Lemma
\ref{lemma:Input decoupled system representation}), and an output
injection and definition of a proper residual (see Theorem
\ref{Theorem: attack id filter}).

As a first design step, we show that the identification problem can be carried out for a
modified system without corrupted measurements, that is, without the feedthrough matrix $D$.
\begin{lemma}{\bf\emph{(Attack identification with safe
      measurements)}}
\label{Lemma: removal_of_D}
Consider the descriptor system \eqref{eq: cyber_physical_fault} with
attack set $K$. The attack set $K$ is identifiable for the descriptor
system \eqref{eq: cyber_physical_fault} if and only if it is
identifiable for the following descriptor\,system:
\begin{align}
	E \dot x(t) &= (A - B_K D_K^\dag C) x(t) + B_K (I-D_K^\dag D_K) u_K (t),\nonumber\\
    \tilde y (t) &= (I-D_{K}D_{K}^{\dag}) C x(t).
\label{eq: cyber_physical_fault_no_D}
\end{align}
\end{lemma}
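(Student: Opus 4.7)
The plan is to prove the equivalence by recasting the identifiability property in terms of a system-matrix pencil and exhibiting an invertible, rank-preserving transformation relating the original and transformed pencils. By the characterization established in the companion paper \cite{FP-FD-FB:12a}, attack identifiability of $K$ for \eqref{eq: cyber_physical_fault} is equivalent to an invariant-zero / rank condition on the joint pencil
\[
P_{K\cup R}(s)=\begin{bmatrix} sE-A & -B_{K\cup R} \\ C & D_{K\cup R} \end{bmatrix},
\]
imposed for every comparison attack set $R$ with $|R|\le|K|$ and $R\neq K$; the analogous characterization holds for the transformed descriptor system \eqref{eq: cyber_physical_fault_no_D}.

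The core step is to apply the invertible, block lower-triangular column operation
\[
T=\begin{bmatrix} I_n & 0 \\ -D_K^\dag C & I_{|K|} \end{bmatrix}
\]
on the state columns and the $K$-block of $P_{K\cup R}(s)$, which yields
\[
\begin{bmatrix} sE-(A-B_KD_K^\dag C) & -B_K & -B_R \\ (I-D_KD_K^\dag)C & D_K & D_R \end{bmatrix}.
\]
A subsequent change of basis on the $K$-columns along $\Ker(D_K)\oplus\Ker(D_K)^\perp$ (realized through an orthogonal matrix $[V,W]$, with $V$ spanning $\Ker(D_K)$) splits the $K$-block into a sub-block $\begin{bmatrix} -B_KV \\ 0 \end{bmatrix}$, where $B_KV=B_K(I-D_K^\dag D_K)V=\tilde B_K V$, and an auxiliary sub-block $\begin{bmatrix} -B_KW \\ D_KW \end{bmatrix}$ in which $D_KW$ has full column rank equal to $\operatorname{rank}(D_K)$. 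The auxiliary sub-block contributes a constant rank, independent of $s$, so the rank condition on $P_{K\cup R}(s)$ reduces to the corresponding condition on the joint pencil of the transformed system. Since the intertwining transformations are invertible, this yields the claimed equivalence.

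The intuition underlying the algebraic reduction is that the orthogonal decomposition $u_K=(I-D_K^\dag D_K)u_K+D_K^\dag D_K u_K$ separates the attack mode into a part that can only affect the measurements through the dynamics (matching the input matrix $\tilde B_K=B_K(I-D_K^\dag D_K)$ of the transformed system) and a part whose feedthrough effect $D_K u_K=D_KD_K^\dag (y-Cx)$ is immediately recoverable from $y$ and the state; correspondingly, $y=\tilde y+D_KD_K^\dag y$ decomposes into the residual output $\tilde y=(I-D_KD_K^\dag)Cx$ of the transformed system and a bijectively-determined feedthrough part. The main obstacle I anticipate is verifying that the column operation $T$ -- which is defined using $D_K^\dag$ -- does not interfere with the comparison block indexed by $R$: because $T$ is block lower-triangular and only mixes state columns into the $K$-block, the $R$-columns of $P_{K\cup R}(s)$ are left invariant, so the rank contribution of $R$ is preserved throughout; this requires careful accounting of the column indices, cardinality constraints, and the overlap case $K\cap R\neq\emptyset$.
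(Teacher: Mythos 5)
Your overall strategy is the same as the paper's: both proofs start from the pencil characterization of identifiability in \cite[Theorem 3.4]{FP-FD-FB:12a} and reduce the joint pencil for $(K,R)$ by separating the feedthrough $D_K$ from the dynamics. Your column operation $T$ is just the transposed view of the paper's row projection by $\blkdiag(I, D_KD_K^\dag, I-D_KD_K^\dag)$ followed by elimination of $g_K$, and it correctly produces the transformed $(sE-A+B_KD_K^\dag C)$ and $(I-D_KD_K^\dag)C$ blocks.

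The gap is in the last step, where you dispose of the auxiliary sub-block $\bigl[\,-B_KW^\transpose \;\; (D_KW)^\transpose\,\bigr]^\transpose$ and assert that the $R$-columns are left invariant. Neither claim survives scrutiny. The $W$-columns cannot be dropped on the grounds that they ``contribute a constant rank'': their dynamics block $-B_KW$ is generally nonzero, so they are not complementary to the remaining columns, and the relevant condition is not a rank count but the existence of a kernel vector with $x\neq 0$. Eliminating them correctly means solving the $\Image(D_K)$-component of the output equation for the corresponding part of $g_K$, namely $Wb=-D_K^\dag D_R g_R$, and substituting back. This substitution necessarily modifies the $R$-columns: the dynamics block becomes $-B_R+B_KD_K^\dag D_R$ and the output block becomes $(I-D_KD_K^\dag)D_R$, which is exactly the pencil $P(s)$ the paper arrives at. Your reduced pencil with unmodified $R$-columns instead imposes the spurious extra constraint $D_KD_K^\dag D_Rg_R=0$ and omits the correction term $B_KD_K^\dag D_Rg_R$ in the dynamics row; the two pencils have genuinely different kernels whenever $D_K^\dag D_R\neq 0$, i.e., whenever $K$ and $R$ share corrupted measurement channels --- a case that Definition \ref{unidentifiable_input} explicitly permits ($R\neq K$ does not imply $R\cap K=\emptyset$) and that you flag but do not resolve. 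Carrying out the elimination of $Wb$ explicitly closes the gap and lands you on the paper's proof.
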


\begin{proof}
  Due to the identifiability hypothesis, there
  exists no attack set $R$ with $|R| \leq |K|$ and $R \neq K$, $s \in
  \mathbb{C}$, $g_K \in \mathbb{R}^{|K|}$, $g_R \in \mathbb{R}^{|R|}$,
  and $x \in \mathbb{R}^{n} \setminus \{0\}$ such that 
  \begin{equation}
    \left[\begin{array}{c|c|c}
        sE - A & - B_{K} & -B_{R} 
        \\\hline
        C & D_{K} & D_{R}
        \\\hline
        C & D_{K} & D_{R}
      \end{array}\right]
    \begin{bmatrix}
      x \\ g_{K} \\ g_{R}
    \end{bmatrix}
    =
    \begin{bmatrix}
      0 \\ 0 \\ 0
    \end{bmatrix}
    \label{eq:augmented_pencil}
    ,
  \end{equation}
  where we added an additional (redundant) output equation
  \cite[Theorem 3.4]{FP-FD-FB:12a}. 
  %
  A multiplication of equation \eqref{eq:augmented_pencil} from the
  left by the projectors $\blkdiag\bigl( I \,,\, D_{K}D_{K}^\dag \,,\,
  (I-D_{K}D_{K}^{\dag}) \bigr)$ yields
  \begin{align*}
    \left[\begin{array}{c|c|c}
        \!sE - A \!&\! - B_{K} \!&\! -B_{R} \!
        \\\hline
        \! D_{K}D_{K}^\dag C \!&\! D_{K} \!&\! D_{K}D_{K}^\dag D_{R} \!
        \\\hline
        \! (I-D_{K}D_{K}^{\dag}) C \!&\! 0 \!&\! (I-D_{K}D_{K}^{\dag}) D_{R} \!
      \end{array}\right]\!
    \begin{bmatrix}
      x \\ g_{K} \\ g_{R}
    \end{bmatrix}
    \!=\!
    \begin{bmatrix}
      0 \\ 0 \\ 0
    \end{bmatrix}.
  \end{align*}
  The variable $g_{K}$ can be eliminated in the first redundant
  (corrupted) output equation according to
  \begin{align*}
    g_{K} = - D_{K}^{\dag} C x - D_{K}^{\dag} D_{R} g_{R} + (I-D_K^\dag D_K) g_K.
  \end{align*}
 %
  Thus, $P(s) [ x^{\transpose} \; g_{K}^{\transpose} \;
  g_{R}^{\transpose} ]^{\transpose} = 0$ has no solution, where $P(s)$
  is
  \begin{equation*}
    \small
    \left[\begin{array}{c|c|c}
        \!\!sE - A +  B_K D_K^\dag C \!&\! - B_K (I-D_K^\dag D_K) \!&\! - B_{R} + B_{K}D_{K}^{\dag}D_{R} \!\!
        \\\hline
        \!\!(I-D_{K}D_{K}^{\dag}) C \!&\! 0 \!&\! (I-D_{K}D_{K}^{\dag}) D_{R} \!\!
      \end{array}\right]
  \end{equation*}
  The statement follows.
\end{proof}

The second design step of our attack identification monitor relies on
the concept of \emph{conditioned invariant subspace}. We refer to \cite{TG:93,GB-GM:91,FLL:90} for a comprehensive
discussion of conditioned invariant subspaces. Let $\Star$ be the
conditioned invariant subspace associated with the system
$(E,A,B,C,D)$, that is, the smallest subspace of the state space
satisfying
\begin{align}
  \label{eq:conditioned}
  \Star =
  \begin{bmatrix}
    A & B
  \end{bmatrix}
  \left(
    \begin{bmatrix}
      E^{-1} \Star \\ \mathbb{R}^m
    \end{bmatrix}
    \cap
    \Ker
    \begin{bmatrix}
      C & D
    \end{bmatrix}
  \right),
\end{align}
and let $L$ be an output injection matrix satisfying
\begin{align}
  \label{eq:conditioned_injection}
  \begin{bmatrix}
    A+LC & B+LD
  \end{bmatrix}
  \begin{bmatrix}
    E^{-1} \Star \\ \mathbb{R}^m
  \end{bmatrix}
  \subseteq \Star.
\end{align}
We transform the descriptor system \eqref{eq:
  cyber_physical_fault_no_D} into a set of canonical coordinates
representing $\Star$ and its orthogonal complement. For a nonsingular
system ($E = I$) such an equivalent state representation can be
achieved by a nonsingular transformation of the form
$Q^{-1}(sI-A)Q$. However, for a singular system different
transformations need to be applied in the domain and codomain such as
$P^{\transpose}(sE-A)Q$ for nonsingular $P$ and $Q$.

\begin{lemma}{\bf\emph{(Input decoupled system
      representation)}}\label{lemma:Input decoupled system
    representation}
\label{lemma:Input decoupled system representation}
  For the system \eqref{eq: cyber_physical_fault_no_D}, let $\Star$
  and $L$ be as in \eqref{eq:conditioned} and
  \eqref{eq:conditioned_injection}, respectively.
  Define the unitary matrices $ P =
    \begin{bmatrix}
      \Basis(\Star) & \Basis((\Star)^\perp)
    \end{bmatrix}
$ and
$    Q =
    \begin{bmatrix}
      \Basis(E^{-1}\Star) & \Basis((E^{-1}\Star)^\perp)
    \end{bmatrix}
$.
 Then
  \begin{align*}
  \small
    \begin{split}
     & P^\transpose E Q \!=\!
      \begin{bmatrix}
        \tilde E_{11} \!&\! \tilde E_{12}\\
        0 \!&\! \tilde E_{22}
      \end{bmatrix},
      P^\transpose (A-B_K D_K^\dag C+LC) Q \!=\!
      \begin{bmatrix}
        \tilde A_{11} \!&\! \tilde A_{12}\\
        0 \!&\! \tilde A_{22}
      \end{bmatrix},\\
    &  P^\transpose B_K (I-D_K^\dag D_K) \!=\!
      \begin{bmatrix}
        \tilde B_K(t) \\ 0
      \end{bmatrix},
      (I-
  D_KD_K^\dag)C) Q \!=\! 
      \begin{bmatrix}
        \tilde C_1 & \tilde C_2
      \end{bmatrix}.
    \end{split}
  \end{align*}
  The attack set $K$ is identifiable for the descriptor
  system \eqref{eq: cyber_physical_fault} if and only if it is
  identifiable for the descriptor system
  \begin{align}
    \begin{bmatrix}
      \tilde E_{11} & \tilde E_{12}\\
      0 & \tilde E_{22}
    \end{bmatrix}
    \begin{bmatrix}
      \dot x_1(t) \\ \dot x_2(t)
    \end{bmatrix}
    &=
    \begin{bmatrix}
      \tilde A_{11} & \tilde A_{12}\\
      0 & \tilde A_{22}
    \end{bmatrix}
    \begin{bmatrix}
      x_1(t) \\ x_2(t)
    \end{bmatrix}
    +
    \begin{bmatrix}
      \tilde B_K(t) \\ 0
    \end{bmatrix},\nonumber\\
    y(t) &=
    \begin{bmatrix}
      \tilde C_1 & \tilde C_2
    \end{bmatrix}
    \begin{bmatrix}
      x_1(t) \\ x_2(t)
    \end{bmatrix}.
    \label{eq:partitioned system}
  \end{align}
\end{lemma}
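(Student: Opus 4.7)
The plan is to establish the claim in two stages: first reduce the identifiability question to the no-feedthrough system via Lemma \ref{Lemma: removal_of_D}, and then perform a unitary change of coordinates adapted to the conditioned invariant subspace $\Star$ to extract the block structure.

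For the reduction, I would invoke Lemma \ref{Lemma: removal_of_D} to rephrase identifiability of $K$ for \eqref{eq: cyber_physical_fault} as identifiability for \eqref{eq: cyber_physical_fault_no_D}, whose modified triple $(\tilde A,\tilde B,\tilde C) = (A - B_K D_K^\dag C,\, B_K(I - D_K^\dag D_K),\, (I - D_K D_K^\dag) C)$ has zero feedthrough. The four block-structure claims then follow directly from the defining properties of $\Star$ and of $L$. Partition $Q = [Q_1\;\; Q_2]$ with $\Image(Q_1) = E^{-1}\Star$ and $P = [P_1\;\; P_2]$ with $\Image(P_1) = \Star$. Since $E(E^{-1}\Star) \subseteq \Star$, the columns of $E Q_1$ lie in $\Image(P_1)$ and are orthogonal to $\Image(P_2)$, so $P_2^\transpose E Q_1 = 0$ and the $(2,1)$ block of $P^\transpose E Q$ vanishes. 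Applying the invariance relation \eqref{eq:conditioned_injection} to the modified triple yields $(A - B_K D_K^\dag C + LC)(E^{-1}\Star) \subseteq \Star$, hence the analogous $(2,1)$ zero block in $P^\transpose(A - B_K D_K^\dag C + LC) Q$. The same invariance forces $B_K(I - D_K^\dag D_K)\,\mathbb R^m \subseteq \Star$, so the lower block of $P^\transpose B_K(I - D_K^\dag D_K)$ vanishes. The output factor $(I - D_K D_K^\dag) C\, Q$ is simply partitioned as $[\tilde C_1\;\; \tilde C_2]$ without any further zero pattern being claimed.

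Identifiability equivalence between \eqref{eq: cyber_physical_fault_no_D} and \eqref{eq:partitioned system} then follows from two elementary invariances that I would verify explicitly: output injection leaves identifiability unchanged because the relevant Rosenbrock pencil (as used in the proof of Lemma \ref{Lemma: removal_of_D}) is left-multiplied by a nonsingular matrix of the form $\bigl[\begin{smallmatrix} I & -L \\ 0 & I \end{smallmatrix}\bigr]$, which preserves rank and hence the zero structure; and the unitary coordinate changes $Q^\transpose$ on the state and $P^\transpose$ on the equation are rank-preserving on the pencil. Chaining these two invariances with Lemma \ref{Lemma: removal_of_D} produces the identifiability equivalence asserted in the statement. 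The main obstacle will be a careful reconciliation of the output injection $LC$ written in the lemma with the ``correct'' output injection $L(I - D_K D_K^\dag) C$ associated with the actual output of \eqref{eq: cyber_physical_fault_no_D}; one must argue either that the projector $(I - D_K D_K^\dag)$ can be absorbed into the choice of $L$, or that identifiability is insensitive to this distinction provided \eqref{eq:conditioned_injection} is interpreted consistently with the modified triple $(\tilde A,\tilde B,\tilde C, 0)$. This bookkeeping between the original and modified system data is the most delicate part of the argument; once settled, the remainder reduces to the geometric manipulations outlined above.
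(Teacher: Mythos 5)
Your proposal is correct and follows essentially the same route as the paper's proof: the zero blocks come from the invariance relations $E(E^{-1}\Star)\subseteq\Star$, $(A-B_KD_K^\dag C+LC)(E^{-1}\Star)\subseteq\Star$, and $\Image(B_K(I-D_K^\dag D_K))\subseteq\Star$ (the paper phrases this via right deflating subspaces, citing \cite{KDI:93}, where you verify the block structure by hand), and the identifiability equivalence follows because output injection, the unitary coordinate changes $P,Q$, and chaining with Lemma \ref{Lemma: removal_of_D} preserve the rank of the relevant pencil and hence the existence of zero dynamics. The bookkeeping issue you flag about $LC$ versus $L(I-D_KD_K^\dag)C$ is a real notational looseness in the paper and is resolved exactly as you suggest, by interpreting \eqref{eq:conditioned_injection} for the modified triple and absorbing the projector into $L$.
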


\begin{proof}
  Let $\mc L = E^{-1} \Star$ and $\mc M = \Star$. Notice that $(A+LC)
  E^{-1} \Star \subseteq \Star$ by the invariance property of $\Star$
  \cite{FLL:90,TG:93}. It follows that $\mc L$ and $\mc M$ are a pair
  of \emph{right deflating subspaces} for the matrix pair $(A + LC,
  E)$ \cite{KDI:93}, that is, $\mc M = A \mc L + E \mc L$ and
  $\textup{dim}(\mc M) \le \textup{dim}(\mc L)$. The sparsity pattern
  in the descriptor and dynamic matrices $\tilde E$ and $\tilde A$ of
  \eqref{eq:partitioned system} arises by construction of the right
  deflating subspaces $P$ and $Q$ \cite[Eq. (2.17)]{KDI:93}, and the
  sparsity pattern in the input matrix arises due to the invariance
  properties of $\Star$ containing $\Image(B_K)$. The statement
  follows because the output injection $L$, the coordinate change $x
  \mapsto Q^{-1} x$, and the left-multiplication of the dynamics by
  $P^{\transpose}$ does not affect the existence of zero dynamics.
\end{proof}

We call system \eqref{eq:partitioned system} the \emph{conditioned
  system} associated with \eqref{eq: cyber_physical_fault}. For the
ease of notation and without affecting generality, the third and final
design step of our attack identification filter is presented for the
conditioned system \eqref{eq:partitioned system}.

\begin{theorem}{\bf\emph{(Attack identification filter for attack set
      $K$)}}\label{Theorem: attack id filter}
  Consider the \emph{conditioned system} \eqref{eq:partitioned system}
  associated with the descriptor system \eqref{eq:
    cyber_physical_fault}. Assume that the attack set is identifiable,
  the network initial state $x(0)$ is known, and the assumptions (A1)
  through (A3) are satisfied. Consider the {\em attack identification
    filter for the attack signature $(B_K, D_K)$}
    \begin{align}
      \label{eq: identification filter}
      \begin{split}
        \tilde E_{22} \dot w_2(t) &= (\tilde A_{22} + \tilde G (I -
        \tilde C_1 \tilde C_1^\dag) \tilde C_2) w_2(t)
        - \tilde G \bar y(t),\\
        r_K(t) &= (I - \tilde C_1 \tilde C_1^\dag) \tilde C_2 w_2(t) -
        \bar y(t),\;\;\;\text{ with }\\
        \bar y(t) &= (I - \tilde C_1 \tilde C_1^\dag) \tilde C_2y(t),
      \end{split}
  \end{align}
  where $w_2(0) = x_2(0)$, and $\tilde G$ is such that $(\tilde
  E_{22},\tilde A_{22} + \tilde G (I - \tilde C_1 \tilde C_1^\dag)
  \tilde C_2)$ is Hurwitz. Then $r_{K} (t) = 0$ for all times $t \in
  \real_{\ge 0}$ if and only if $K$ coincides with the attack set.
\end{theorem}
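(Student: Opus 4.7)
The plan is to prove the two directions of the biconditional separately, in the spirit of the invariant-zero / output-nulling argument used in the proof of Theorem \ref{Theorem: Centralized attack detection filter}. By Lemma \ref{Lemma: removal_of_D} and Lemma \ref{lemma:Input decoupled system representation} one can work in the conditioned coordinates of \eqref{eq:partitioned system}. Their decisive structural feature is that $\tilde B_{K} u_{K}$ enters only the first block: the $x_{2}$-dynamics are autonomous and the projected output $\bar y = (I-\tilde C_{1}\tilde C_{1}^{\dag})y$ annihilates the $\tilde C_{1} x_{1}$-component, so that $\bar y$ is insensitive to any attack with signature $(B_{K},D_{K})$.

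For the direction ``$K$ equals the attack set $\Rightarrow r_{K}\equiv 0$'', I would analyze the error $e_{2}(t) := w_{2}(t) - x_{2}(t)$. Subtracting the $x_{2}$-equation of \eqref{eq:partitioned system} from the filter \eqref{eq: identification filter} and using that $u_{K}$ drives only $x_{1}$, every term containing $u_{K}$ cancels and one is left with the autonomous pencil
\begin{align*}
\tilde E_{22}\dot e_{2}(t) = \bigl(\tilde A_{22}+\tilde G(I-\tilde C_{1}\tilde C_{1}^{\dag})\tilde C_{2}\bigr)\,e_{2}(t),\qquad e_{2}(0)=0.
\end{align*}
By the Hurwitz and regularity assumption on $\tilde G$, the unique solution is $e_{2}\equiv 0$, so that $r_{K}(t) = (I-\tilde C_{1}\tilde C_{1}^{\dag})\tilde C_{2}\,e_{2}(t)\equiv 0$.

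For the converse I would argue by contradiction: suppose $r_{K}\equiv 0$ but the actual attack set is some $R$ with $|R|\le|K|$ and $R\ne K$, so that the observed $y(\cdot)$ is generated by an attack $(B_{R}u_{R},D_{R}u_{R})$. The residual condition $r_{K}\equiv 0$ yields $\bar y(t) = (I-\tilde C_{1}\tilde C_{1}^{\dag})\tilde C_{2}\,w_{2}(t)$, and combined with the invariance relation \eqref{eq:conditioned_injection} for $\Star$ and assumptions (A2)--(A3), this allows one to reconstruct an initial condition and a smooth attack mode $u_{K}$ supported on $K$ which reproduce exactly the same $y$ via \eqref{eq: cyber_physical_fault}. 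The existence of two distinct attack sets $K,R$ with $|R|\le|K|$ producing identical outputs would contradict the identifiability hypothesis (Definition \ref{unidentifiable_input}), forcing $R=K$. The main obstacle lies in this reconstruction step: from the zero-residual condition one only has the filter state $w_{2}$, and one must rebuild an entire admissible trajectory of \eqref{eq: cyber_physical_fault} whose attack support lies in $K$. This reduces to an invariant-zero / output-nulling argument on the pencil of the conditioned system \eqref{eq:partitioned system}, analogous to \eqref{eq: pencil for error system - 2} in the detection proof, and crucially relies on $\Star$ being the smallest conditioned invariant subspace containing $\Image(B_{K})$.
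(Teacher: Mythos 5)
Your first direction is correct and coincides with the paper's argument: since $u_K$ enters only the $x_1$-block of the conditioned system and $(I-\tilde C_1\tilde C_1^\dag)\tilde C_1=0$, the error $e_2=w_2-x_2$ obeys an autonomous regular pencil with $e_2(0)=0$, hence $r_K\equiv 0$ whenever $K$ is the true attack set.

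The converse, however, is where the mathematical content of the theorem lies, and your proposal stops exactly at that point: you state that one must ``reconstruct an initial condition and a smooth attack mode $u_K$ supported on $K$ which reproduce exactly the same $y$'' and then concede that this reconstruction is ``the main obstacle.'' Naming the obstacle is not the same as overcoming it. The paper resolves it algebraically rather than at the trajectory level: it writes the identifiability hypothesis as the non-existence of a nontrivial solution to the Rosenbrock pencil carrying \emph{both} signatures $(B_K,D_K)$ and $(B_R,D_R)$, splits the output row along $\Image(\tilde C_1)$ and its orthogonal complement, and then shows that any candidate invariant zero $(s,x_2,g_R)$ of the \emph{reduced} residual dynamics (the map from $u_R$ to $r_K$) can be lifted to a solution of the full pencil. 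The lifting is the crux: one chooses $x_1\in\Ker(\tilde C_1)^\perp$ to satisfy the $\Image(\tilde C_1)$-part of the output equation, and then $x_1\in\Ker(\tilde C_1)$ together with $g_K$ to satisfy the first block row, which is possible precisely because the invariance identity $\Star=A\bigl(E^{-1}\Star\cap\Ker(C)\bigr)+\Image(B_K)$ reads in the new coordinates as $\Image(I)=\tilde A_{11}\Ker(\tilde C_1)+\Image(\tilde B_K)$ and because $\bigl[(s\tilde E_{11}-\tilde A_{11})\Ker(\tilde C_1)\;\;\tilde B_K\bigr]$ has full row rank by controllability of $\Star$. Identifiability then forces the reduced pencil to have no zeros, so every $u_R$ with $R\neq K$ excites $r_K$. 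Your intuition that the argument ``crucially relies on $\Star$ being the smallest conditioned invariant subspace containing $\Image(B_K)$'' is correct, but without exhibiting this lifting the converse direction remains unproven.
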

\begin{proof}
  Let $w(t) = [w_1(t)^\transpose
  \; w_2(t)^\transpose]^\transpose$, where $w_1(t)$ obeys 
  \begin{align*}
    \tilde E_{11} \dot w_1 (t) + \tilde E_{12} \dot w_2(t) = \tilde A_{11} w_1(t) + \tilde A_{12}
    w_2(t).
  \end{align*}
Consider the filter error $e(t) = w(t) - x(t)$, and
notice that
  \begin{align*}
    \begin{split}
    \begin{bmatrix}
      \tilde E_{11} & \tilde E_{12}\\
      0 & E_{22}
    \end{bmatrix}
    \begin{bmatrix}
      \dot e_1(t) \\ \dot e_2(t)
    \end{bmatrix}
    &=
    \begin{bmatrix}
      \tilde A_{11} & \tilde A_{12}\\
      0 & \bar A_{22}
    \end{bmatrix}
    \begin{bmatrix}
      e_1(t) \\ e_2(t)
    \end{bmatrix}
    -
    \begin{bmatrix}
      \tilde B_K \\ 0
    \end{bmatrix}
    u_K (t),
    \\
    r_K(t) &=
    (I - \tilde C_1 \tilde C_1^\dag) \tilde C_2 e_2(t),
  \end{split}
  \end{align*}
  where $\bar A_{22} = \tilde A_{22} + \tilde G (I - \tilde C_1 \tilde C_1^\dag)
  \tilde C_2)$.
  Notice that $r_K(t)$ is not affected by the input $u_K (t)$, so
  that, since $e_{2}(0) = 0$ due to $w_{2} (0) = x_{2} (0)$, the
  residual $r_K (t)$ is identically zero when $K$ is the attack set.
  In order to prove the theorem we are left to show that for every set
  $R$, with $|R| \le |K|$ and $R \cap K = \emptyset$, every attack
  mode $u_R (t)$ results in a nonzero residual $r_K(t)$. From
  \cite[Theorem 3.4]{FP-FD-FB:12a} and the identifiability
  hypothesis, for any $R \neq K$, there exists no solution to
    \begin{align*}
      \left[\begin{array}{cc|c|c}
          \!s \tilde E_{11} - \tilde A_{11}\! & \!s \tilde E_{12} -
          \tilde A_{12}\! & \!  \tilde B_K\! & \! - B_{R1} \! \\
          0 & \!s \tilde E_{22} - \bar A_{22}\! & 0 & \!-
          B_{R2}\!\\\hline \tilde C_1 & \tilde C_2 & 0 & D_R
    \end{array}\right]
    \!\begin{bmatrix}
      x_1 \\ x_2 \\ g_{K} \\ g_{R}
    \end{bmatrix}
    \!=\!
    \begin{bmatrix}
      0 \\ 0 \\ 0 \\ 0
    \end{bmatrix}\!.
  \end{align*}
  A projection of the equation $0 = \tilde C_{1}  x_{1} + \tilde C_{2} 
  x_{2} + D_{R} g_{R}$ onto the image of $\tilde C_{1}$ and its orthogonal
  complement yields
  \begin{align}
   & \small
   \left[\begin{array}{cc|c|c}
      \!s \tilde E_{11} - \tilde A_{11} \!&\! s \tilde E_{12} - \tilde A_{12} \!&\!  B_K \!&\! - B_{R1}\\
      \!0 \!&\! s \tilde E_{22} - \bar  A_{22} \!&\! 0 \!&\! - B_{R2}\!\\\hline
      \!\tilde C_1 \!&\! \tilde C_1 \tilde C_1^\dag \tilde C_2 \!&\! 0 \!&\! \tilde C_1 \tilde C_1^\dag  D_R\!\\
      \!0 \!&\! (I - \tilde C_1 \tilde C_1^\dag) \tilde C_2 \!&\! 0 \!&\! (I - \tilde C_1 \tilde C_1^\dag)  D_R\!\\
    \end{array}\right]\!\!
    \begin{bmatrix}
       x_1 \\  x_2 \\ g_{K} \\ g_{R}
    \end{bmatrix}
    \nonumber\\
    &\small=
    \begin{bmatrix}
      0 & 0 & 0 & 0
    \end{bmatrix}^{\transpose}
    .
    \label{eq:pencil-3}
  \end{align}
  Due to the identifiability hypothesis the set of equations
  \eqref{eq:pencil-3} features no solution $[ x_{1}^{\transpose} \;
  x_{2}^{\transpose} \; g_{K}^{\transpose} \; g_{R}^{\transpose} ]
  ^{\transpose}$ with $[x_{1}^{\transpose} \;
  x_{2}^{\transpose}]^{\transpose} = 0$.
  
  Observe that, for every $ x_2$ and $g_R$, there exists $ x_1 \in
  \Ker(\tilde C_1)^\perp$ such that the third equation of
  \eqref{eq:pencil-3} is satisfied.
  Furthermore, for every $ x_2$ and $g_R$, there exist $ x_1 \in \Ker
  (\tilde C_1)$ and $g_K$ such that the first equation of
  \eqref{eq:pencil-3} is satisfied. Indeed, since $Q E^{-1} \Star =
  [\Image(I) \; 0]^\transpose$ and $P^\transpose \Star = [\Image(I) \;
  0]^\transpose$, the invariance of $\Star$ implies that $\Star = A
  (E^{-1} \Star \cap \Ker (C)) + \Image( B_K)$, or equivalently in new
  coordinates, $\Image(I) = \tilde A_{11} \Ker (\tilde C_1) + \Image
  (\tilde B_K)$. Finally note that $[(s \tilde E_{11} - \tilde A_{11})
  \Ker(\tilde C_1) \; \tilde B_K]$ is of full row rank due to the
  controllability of the subspace $\Star$ \cite{TG:93}. We conclude
  that there exist no vectors $ x_2$ and $g_R$ such that $(s \tilde
  E_{22} - \bar A_{22}) x_2 - B_{R2} g_R = 0$ and $(I - \tilde C_1
  \tilde C_1^\dag) (\tilde C_2 x_2 + D_{R} g_{R}) = 0$ and the
  statement follows.
\end{proof}

\IncMargin{.3em}
\begin{algorithm}[t]
   {\footnotesize
    
      \SetKwInOut{Input}{Input}
      \SetKwInOut{Output}{Output}
      \SetKwInOut{Set}{Set}
      \SetKwInOut{Title}{Algorithm}
      \SetKwInOut{Require}{Require}

      \Input{Matrices $E$, $A$, $B_K$, and $D_K$,\;}

      \Require{Identifiability of attack set $K$\;}


      \BlankLine
      \BlankLine
    
      \nl From system \eqref{eq: cyber_physical_fault} define the
      system \eqref{eq: cyber_physical_fault_no_D}\;

      \BlankLine

      \nl Compute $\Star$ and $L$ for system \eqref{eq:
        cyber_physical_fault_no_D} as in \eqref{eq:conditioned} and
      \eqref{eq:conditioned_injection}\;
      
      \BlankLine

      \nl Apply $L$, $P$, and $Q$ as in Lemma \ref{lemma:Input
        decoupled system representation} leading to system
      \eqref{eq:partitioned system}\;

      \BlankLine

      \nl For \eqref{eq:partitioned system}, define $r_K$ and apply
      the output injection $\bar G$ as in \eqref{eq: identification
        filter}.
      
    }
  \caption{\textit{Identification Monitor for $(B_K,D_K)$}}
  \label{alg:identification}
\end{algorithm} 
\DecMargin{.3em}


Our identification procedure is summarized in Algorithm
\ref{alg:identification}. Observe that the proposed attack
identification filter extends classical results concerning the design
of unknown-input fault detection filters. In particular, our filter
generalizes the construction of \cite{MAM-GCV-ASW-CM:89} to descriptor
systems with direct feedthrough matrix. Additionally, we guarantee the
absence of invariant zeros in the residual dynamics. By doing so, our
attack identification filter is sensitive to \emph{every} attack
mode. Notice that classical fault detection filters, for instance
those presented in \cite{MAM-GCV-ASW-CM:89}, are guaranteed to detect
and isolate signals that do not excite exclusively zero
dynamics. Finally, an attack identification filter for the case of
state space or index-one systems is presented in our previous work
\cite{FP-FD-FB:11i}.

\begin{remark}{\bf\emph{(Complexity of centralized identification)}}
  Our centralized identification procedure assumes the knowledge of
  the cardinality $k$ of the attack set, and it achieves
  identification of the attack set by constructing a residual
  generator for $\binom{n+p}{k}$ possible attack sets.
  Thus, for each finite value of $k$, our procedure constructs $O
  (n^k)$ filters. If only an upper bound $\bar k$ on the cardinality
  of the attack set is available, identification can be achieved by
  constructing $\binom{n+p}{\bar k}$ filters, and by intersecting the
  attack sets generating zero residuals.  \oprocend
\end{remark}

\begin{remark}{\bf\emph{(Attack identification filter in the presence
      of noise)}}\label{remark:id_noise} Let the dynamics and the
  measurements of the system \eqref{eq: cyber_physical_fault} be
  affected, respectively, by the additive white noise signals $\eta
  (t)$, with $\mathbb{E} [\eta(t) \eta^\transpose(\tau)] = R_{\eta}
  \delta (t - \tau)$, and $\zeta (t)$, with $\mathbb{E} [\zeta(t)
  \zeta^\transpose(\tau)] = R_{\zeta} \delta (t - \tau)$. Let the
  state and output noise be independent of each other. Then, simple
  calculations show that the dynamics and the output of the attack
  identification filter \eqref{eq: identification filter} are
  affected, respectively, by the noise signals
    \begin{align*}
    \hat \eta (t) &= P^\transpose \eta (t) + P^\transpose (L (I - D_K
    D_K^\dag) - B_K D_K^\dag) \zeta (t),\\
    \hat \zeta (t) &= - \bigg(I - \left[ (I - D_K D_K^\dag) C Q_1 \right] \left[ (I - D_K
    D_K^\dag) C Q_1 \right]^\dag  \\&(I - D_K D_K^\dag)  \bigg) \zeta (t),
  \end{align*}
  where $Q_1 = \Basis (E^{-1} \Star)$. 
  Define the covariance matrix
  \begin{align*}
    R_{\hat \eta, \hat \zeta} =
    \mathbb{E} \left(
    \begin{bmatrix}
      \hat \eta (t) \\ \hat \zeta (t)
    \end{bmatrix}
    \begin{bmatrix}
      \hat \eta^\transpose (t) & \hat \zeta^\transpose (t)
    \end{bmatrix}
    \right).
  \end{align*}
  Notice that the off-diagonal elements of $R_{\hat \eta, \hat \zeta}$
  are in general nonzero, that is, the state and output noises of the
  attack identification filter are not independent of each other. As
  in the detection case, by using the covariance matrix $R_{\hat \eta,
    \hat \zeta}$, the output injection matrix $\tilde G$ in \eqref{eq:
    identification filter} can be designed to optimize the robustness
  of the residual $r_K (t)$ against noise. A related example is in
  Section \ref{sec:example}.  \oprocend
\end{remark}

We conclude this section by observing that a distributed
implementation of our attack identification scheme is not
practical. Indeed, even if the filters parameters may be obtained via
distributed computation, still $\binom{n+p}{k}$ filters would need to
be implemented to identify an attack of cardinality $k$. Such a
distributed implementation results in an enormous communication effort
and does not reduce the fundamental combinatorial complexity.


\subsection{Fully decoupled attack identification}
\label{Subsection: Decoupled regional attack identification monitor
  design} In the following sections we develop a distributed attack
identification procedure. Consider the decentralized setup presented
in Section \ref{sec:decentralized} with assumptions (A4)-(A7). The
subsystem assigned to the $i$-th control center is
\begin{align}\label{eq:subsystem_2}
  \begin{split}
    E_i \dot x_i(t) &= A_i x_i(t) + \sum_{j \in \mc N_i^\textup{in}}
    A_{ij} x_j(t) +
    B_{K_i} u_{K_i}(t),\\
    y_i (t) &= C_i x_i (t) + D_{K_i} u_{K_i}(t), \;\; i \in \until{N},
  \end{split}
\end{align}
where $K_i = (K \cap V_i) \cup K_i^\text{p}$ with $K$ being the attack
set and $K_i^\text{p}$ being the set of corrupted measurements in the
region $\subscr{G}{t}^i$.

As a first distributed identification method we consider the fully
decoupled case (no cooperation among control centers). In the spirit of
\cite{MS-YG:92}, the neighboring states $x_j(t)$ affecting $x_{i}(t)$
are treated as unknown inputs ($f_i(t)$) to the $i$-th subsystem:
\begin{align}\label{eq:subsystem_3}
  \begin{split}
    E_i \dot x_i(t) &= A_i x_i(t) + \supscr{B}{b}_i f_i(t) +
    B_{K_i} u_{K_i}(t),\\
    y_i (t) &= C_i x_i (t) + D_{K_i} u_{K_i}(t), \;\; i \in \until{N},
  \end{split}
\end{align}
where $\supscr{B}{b}_i = [A_{i1} \,\cdots\, A_{i,i-1} \, A_{i,i+1}
\,\cdots\, A_{iN}]$. We refer to \eqref{eq:subsystem_3} as to the {\em
  i-th decoupled system}, and we let $\supscr{K}{b}_{i} \subseteq
V_{i}$ be the set of {\em boundary nodes} of \eqref{eq:subsystem_3},
that is, the nodes $j \in V_i$ with $A_{jk} \neq 0$ for some $k \in
\until n \setminus V_{i}$.

If the attack identification procedure in Section
\ref{sec:identification_centralized} is designed for the $i$-th
decoupled system \eqref{eq:subsystem_3} subject to unknown inputs
$f_i(t)$ and $u_{K_{i}}(t)$, then a total of only $\sum_{i=1}^{N}
\binom{n_i + p_i}{|K_{i}|} < \binom{n+p}{|K|}$ need to be designed.
Although the combinatorial complexity of the identification problem is
tremendously reduced, this decoupled identification procedure has
several limitations.
%
The following fundamental limitations follow from \cite{FP-FD-FB:12a}:
\begin{enumerate}
	
\item[(L1)] if $(E_{i},A_{i}, B_{K_{i}}, C_{i} , D_{K_{i}})$ has invariant
  zeros, then $K_{i}$ is not detectable by the $i$-th control center;
	
\item[(L2)] if there is an attack set $R_{i}$, with $|R_{i}|\le|K_{i}|$,
  such that $(E_{i},A_{i}, [B_{K_{i}} \; B_{R_{i}}], C_{i} ,
  [D_{K_{i}} \; D_{R_{i}}])$ has invariant zeros, then $K_{i}$ is not
  identifiable by the $i$-th control center;
	
\item[(L3)] if $K_i \not\subseteq \supscr{K}{b}_{i}$ and
  $(E_{i},A_{i}, [\supscr{B}{b}_i \; B_{K_{i}}], C_{i} , D_{K_{i}})$
  has no invariant zeros, then $K_{i}$ is detectable by the $i$-th
  control center; and
\item[(L4)] if $K_i \not\subseteq \supscr{K}{b}_{i}$ and there is no
  attack set $R_{i}$, with $|R_{i}|\le|K_{i}|$, such that
  $(E_{i},A_{i}, [\supscr{B}{b}_i \; B_{K_{i}} \; B_{R_{i}}], C_{i} ,
  [D_{K_{i}} \; D_{R_{i}}])$ has invariant zeros, then $K_{i}$ is
  identifiable by the $i$-th control center.
\end{enumerate}
Whereas limitations (L1) and (L2) also apply to any centralized attack
detection and identification monitor, limitations (L3) and (L4) arise
by naively treating the neighboring signals as unknown inputs. Since,
in general, the $i$-th control center cannot distinguish between an
unknown input from a safe subsystem, an unknown input from a corrupted
subsystem, and a boundary attack with the same input direction, we
can further state that
\begin{enumerate}\setcounter{enumi}{4}
\item[(L5)] any (boundary) attack set $K_{i} \subseteq \supscr{K}{b}_{i}$ is not
  detectable and not identifiable by the $i$-th control center, and
\item[(L6)] any (external) attack set $K \setminus K_{i}$
  is not detectable and not identifiable by the $i$-th control center.
\end{enumerate}
We remark that, following our graph-theoretic analysis in \cite
[Section IV]{FP-FD-FB:12a}, the attack $K_{i}$ is generically
identifiable by the $i$-th control center if the number of attacks
$|K_{i}|$ on the $i$-th subsystem is sufficiently small, the internal
connectivity of the $i$-th subsystem (size of linking between unknown
inputs/attacks and outputs) is sufficiently high, and the number of
unknown signals $|\supscr{K}{b}_{i}|$ from neighboring subsystems is
sufficiently small. These criteria can ultimately be used to select an
attack-resilient partitioning of a cyber-physical system.


\subsection{Cooperative attack identification}
In this section we improve upon the naive fully decoupled method
presented in Subsection \ref{Subsection: Decoupled regional attack
  identification monitor design} and propose an identification method
based upon a divide and conquer procedure with cooperation.  This
method consists of the following steps.

\noindent \textbf{(S1: estimation and communication)} Each control
center estimates the state of its own region by means of an
\emph{unknown-input observer} for the $i$-th subsystem subject to the
unknown input $\supscr{B}{b}_i f_i(t)$. For this task we build upon
existing unknown-input estimation algorithms (see the Appendix for a
constructive procedure). Assume that the state $x_i (t)$ is
reconstructed modulo some subspace $\mc F_i$.\footnote{For nonsingular
  systems without feedthrough matrix, $\mc F_{i}$ is as small as the
  largest $(A_i,\supscr{B}{b}_i)$-controlled invariant subspace
  contained in $\Ker(C_i)$ \cite{GB-GM:91}.} Let $F_i = \Basis(\mc
F_i)$, and let
$x_i(t) = \tilde x_i(t) + \hat x_i(t)$, where $\hat x_i(t)$ is the
estimate computed by the $i$-th control center, and $\tilde x_i(t) \in
\mc F_i$. Assume that each control center $i$ transmits the estimate
$\hat x_i(t)$ and the uncertainty subspace $F_i$ to every neighboring
control center.

\noindent \textbf{(S2: residual generation)} Observe that each input
signal $A_{ij} x_j(t)$ can be written as $A_{ij} x_j(t) = A_{ij}
\tilde x_j(t) + A_{ij} \hat x_j(t)$, where $\tilde x_j(t) \in \mc
F_j$. Then, after carrying out step (S1), only the inputs $A_{ij}
\tilde x_j(t) $ are unknown to the $i$-th control center, while the
inputs $A_{ij} \hat x_j(t)$ are known to the $i$-th center due to
communication. Let $\supscr{B}{b}_i F_{i} = [A_{i1}F_1 \, \cdots\,
A_{i,i-1}F_{i-1} \, A_{i,i+1}F_{i+1} \,\cdots\, A_{iN}F_N], $ and
rewrite the signal $\supscr{B}{b}_i \tilde x(t)$ as $\supscr{B}{b}_i
\tilde x(t) = \supscr{B}{b}_i F_i f_i(t)$, for some unknown signal
$f_i(t)$. Then the dynamics of the $i$-th subsystem read as
\begin{align*}
  E_i \dot x_i(t) &= A_i x_i(t) + \supscr{B}{b}_i \hat x(t) +
  \supscr{B}{b}_i F_{i} f_{i}(t) + B_{K_i} u_{K_i} (t).
\end{align*}
Analogously to the filter presented in Theorem \ref{Theorem: attack id
  filter} for the attack signature $(B_{K},D_{K})$, consider now the
following filter (in appropriate coordinates) for
\eqref{eq:subsystem_3} for the signature $(\supscr{B}{b}_i F_{i},0)$
\begin{align}\label{eq:distr_id_filter}
  \begin{split}
    E_i \dot w_i (t) &= (A_i + L_i C_i) w_i(t) - L y(t) + \supscr{B}{b}_i \bar
    x(t),\\
    r_i(t) &= M w_i(t) - H y(t),
  \end{split}
\end{align}
where $L_i$ is the injection matrix associated with the conditioned
invariant subspace generated by $\supscr{B}{b}_i F_i$, with $(E_i,A_i
+ L_i C_i)$ Hurwitz, and $\bar x(t)$ is the state transmitted to $i$
by its neighbors. Notice that, in the absence of attacks in the
regions $\mc N_i^\text{in}$, we have $\supscr{B}{b}_i \bar x(t) =
\supscr{B}{b}_i \hat x(t)$. Finally, let the matrices $M$ and $H$ in
\eqref{eq:distr_id_filter} be chosen so that the input
$\supscr{B}{b}_i F_{i} f_{i}(t)$ does not affect the residual
$r_i(t)$.\footnote{See Section \ref{sec:identification_centralized}
  for a detailed construction of this type of filter.}
Consider the filter error $e_i(t) = w_i(t) - x_i(t)$, and notice that
\begin{align}\label{eq:error_distr_id}
  E_i \dot e_i (t) &= (A_i + L_i C_i) e_i(t) + \supscr{B}{b}_i (\bar
  x(t) - \hat x(t) ) - B_{K_i} u_{K_i} (t) \nonumber\\ &\;\;\;\;- \supscr{B}{b}_i F_i
  f_i(t), \\
  r_i(t) &= M e_i(t),\nonumber
\end{align}

\noindent \textbf{(S3: cooperative residual analysis)}
We next state a key result for our distributed identification
procedure.

\begin{lemma}{\bf\emph{(Characterization of nonzero
      residuals)}}\label{lemma_regional_identification}
  Let each control center implement the distributed identification
  filter \eqref{eq:distr_id_filter} with $w_i(0) = x_i(0)$. Assume
   that the attack $K$ affects only
  the $i$-th subsystem, that is $K = K_i$. Assume that
  $(E_i,A_i,[\supscr{B}{b}_i F_i \, B_{K_i}],C_i)$ and
  $(E_i,A_i,\supscr{B}{b}_i,C_i)$ have no invariant zeros. Then,
  \begin{enumerate}
    \item $r_i (t) \neq 0$ at some time $t$, and
    \item either $r_j (t) = 0$ for all $j \in \mc N_i^\textup{out}$ at
      all times $t$, or $r_j (t) \neq 0$ for all $j \in \mc
      N_i^\textup{out}$ at some time $t$.
  \end{enumerate}
\end{lemma}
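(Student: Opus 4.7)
The plan is to analyze the two residuals separately using the error dynamics \eqref{eq:error_distr_id}, exploiting the hypothesis $K = K_i$ (only region $i$ is attacked; hence $u_{K_j}(t) = 0$ for every $j \neq i$).

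For part (1), since the in-neighbors of $i$ carry no local attacks, the estimates they transmit agree with the ``true'' part of their local states modulo the unobservable subspace, i.e.\ $\bar x(t) - \hat x(t) = 0$ in $i$'s filter. The error dynamics \eqref{eq:error_distr_id} then reduce to a descriptor system driven only by $B_{K_i} u_{K_i}(t)$ and the uncertainty $\supscr{B}{b}_i F_i f_i(t)$, with output $r_i(t) = M w_i(t) - Hy(t)$ and initial condition $e_i(0)=0$. By construction (step S2, paralleling Theorem \ref{Theorem: attack id filter}) the injection $L_i$ and the output maps $M,H$ decouple $\supscr{B}{b}_i F_i f_i$ from $r_i$. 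Replicating the matrix-pencil argument used in the proof of Theorem \ref{Theorem: attack id filter}, the no-invariant-zeros assumption on $(E_i, A_i, [\supscr{B}{b}_i F_i \; B_{K_i}], C_i)$ implies that the residual dynamics from $u_{K_i}$ to $r_i$ admit no zero dynamics, so $u_{K_i} \not\equiv 0$ forces $r_i(t) \neq 0$ at some time.

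For part (2), fix $j \in \mc N_i^\textup{out}$. Since $j$ is unattacked, its error dynamics read as in \eqref{eq:error_distr_id} with $u_{K_j}=0$; the driving signal is $\supscr{B}{b}_j(\bar x^{(j)}(t)-\hat x_{-j}(t))$. For every in-neighbor $k$ of $j$ other than $i$, $\bar x_k^{(j)}-\hat x_k$ lies in $\mc F_k$ and is absorbed by the $\supscr{B}{b}_j F_j f_j$ term. The only ``extra'' driving signal therefore enters through the column $A_{ji}$ and has the form $A_{ji}\sigma(t)$, where $\sigma(t) := \bar x_i(t) - \bar x_i^{\,\textup{nom}}(t)$ is the attack-induced deviation of the signal that $i$ broadcasts; crucially, $\sigma(t)$ depends on $i$ alone and is the \emph{same signal} seen by every $j \in \mc N_i^\textup{out}$. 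If $\sigma(t)\in \mc F_i$ for all $t$, the perturbation $A_{ji}\sigma(t)$ lies in $\textup{Image}(A_{ji}F_i)\subseteq \textup{Image}(\supscr{B}{b}_j F_j)$ by construction of $\supscr{B}{b}_j F_j$, is absorbed by $j$'s filter, and $r_j\equiv 0$ for every $j$ simultaneously. Otherwise, $\sigma(t)\notin \mc F_i$ at some time, and I would invoke the no-invariant-zeros hypothesis on $(E_i,A_i,\supscr{B}{b}_i,C_i)$: a deviation outside $\mc F_i$ cannot be produced as zero dynamics of region $i$, so its propagation through each $A_{ji}$ creates an input to $j$'s filter that cannot be modeled as an element of $\supscr{B}{b}_j F_j f_j$, forcing $r_j(t)\neq 0$ at some time for every $j \in \mc N_i^\textup{out}$.

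The main obstacle is the last step: justifying that whenever $\sigma(t)\notin\mc F_i$ at some instant, the image $A_{ji}\sigma(t)$ escapes $\textup{Image}(\supscr{B}{b}_j F_j)$ in a way that produces a nonzero residual for \emph{every} out-neighbor $j$, rather than being coincidentally filtered out by some particular $A_{ji}$. The key structural lever is that $\mc F_i$, by the no-invariant-zeros hypothesis on $(E_i,A_i,\supscr{B}{b}_i,C_i)$, coincides with the largest unknown-input-unobservable subspace at $i$, so the definition of $F_j$ used when designing $L_j,M_j$ must include precisely $A_{ji}F_i$ and nothing more along the $i$-direction; any $\sigma(t)\notin\mc F_i$ is therefore, by maximality, exciting in directions unaccounted for by every $F_j$, $j\in\mc N_i^\textup{out}$. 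Formalizing this via a matrix-pencil argument analogous to \eqref{eq:pencil-3} is what carries (2) through.
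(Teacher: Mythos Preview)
Your argument for part (i) is correct and coincides with the paper's: since the in-neighbors of region $i$ are unattacked, their transmitted estimates agree with the true ones, the error dynamics at $i$ are driven only by $[\supscr{B}{b}_i F_i\;\, B_{K_i}]$, and the no-invariant-zeros hypothesis on that quadruple forces $r_i\not\equiv 0$.

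For part (ii) you depart from the paper and introduce a complication that creates the very gap you flag. The paper does not split on whether $\sigma(t)\in\mathcal F_i$; it splits simply on whether the estimate broadcast by region $i$ equals the one it would broadcast in the absence of attack, i.e.\ $\hat x_i(t)=\bar x_i(t)$ or not. In the first alternative every neighbor $j$ receives precisely the signals it would have received without any attack, so the $j$-th error system is unforced with $e_j(0)=0$ and $r_j\equiv 0$ for every $j$ simultaneously. In the second alternative, the paper lumps together the decoupled term and the estimation mismatch, observes that $\supscr{B}{b}F f(t)+\supscr{B}{b}(\hat x(t)-\bar x(t))\in\Image(\supscr{B}{b})$, and invokes the no-invariant-zeros hypothesis on $(E,A,\supscr{B}{b},C)$ directly on this combined input to conclude each $r_j\not\equiv 0$. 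There is no attempt to isolate the $A_{ji}F_i$ column or to reason about $\mathcal F_i$ at all.

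Your proposed patch does not work. The no-invariant-zeros hypothesis on $(E_i,A_i,\supscr{B}{b}_i,C_i)$ constrains the zero dynamics of subsystem $i$; it says nothing about the \emph{size} of $\mathcal F_i$ beyond its definition, and it certainly does not prevent $A_{ji}\sigma(t)$ from landing in $\Image(\supscr{B}{b}_j F_j)$ at some particular neighbor $j$ (for instance by cancellation against an $A_{jk}F_k$ column with $k\neq i$). Your ``maximality'' argument would need a structural statement about every $F_j$, $j\in\mathcal N_i^{\textup{out}}$, that the hypotheses do not supply. The paper avoids the whole issue by working with the full boundary input matrix $\supscr{B}{b}$ rather than with $\supscr{B}{b}F$, so the delicate subspace-containment question you are wrestling with never arises; the no-zeros assumption is applied as an input--output property of the error dynamics, not as a characterization of $\mathcal F_i$.
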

\begin{proof}
  Notice that the estimation computed by a control center is correct
  provided that its area is not under attack. In other words, since $K
  = K_i$, we have that $\supscr{B}{b}_i\hat x(t) = \supscr{B}{b}_i\bar
  x(t)$ in \eqref{eq:error_distr_id}. Since $(E_i,A_i,[\supscr{B}{b}_i
  F_i \, B_{K_i}],C_i)$ has no invariant zeros, statement (i) follows.
  In order to prove statement (ii), consider the following two cases:
  the $i$-th control center provides the correct estimation $\hat x_i
  (t) = \bar x_i(t)$ or an incorrect estimation $\hat x_i(t) \neq \bar
  x_i(t)$. For instance, if $\Image(B_{K_i}) \subseteq
  \Image(\supscr{B}{b}_i)$, that is, the attack set $K_i$ lies on the
  boundary of the $i$-th area, then $\hat x_i (t) = \bar x_i(t)$.
  Notice that, if $\hat x_i (t) = \bar x_i (t)$, then each residual
  $r_j (t)$, $j \neq i$, is identically zero since the associated
  residual dynamics \eqref{eq:error_distr_id} evolve as an autonomous
  system without inputs. Suppose now that $\hat x_i (t) \neq \bar
  x_i(t)$. Notice that $\supscr{B}{b}_i F_i f_i(t) + \supscr{B}{b}_i
  (\hat x(t) - \bar x(t)) \in \Image(\supscr{B}{b}_i)$. Then, since
  $(E_i,A_i,\supscr{B}{b}_i,C_i)$ has no invariant zeros, each
  residual $r_j (t)$ is nonzero for some $t$.
\end{proof}

As a consequence of Lemma \ref{lemma_regional_identification} the
region under attack can be identified through a distributed
procedure. Indeed, the $i$-th area is safe if either of the following
two criteria is satisfied:
\begin{enumerate}
  \item[(C1)] the associated residual $r_{i}(t)$ is identically zero, or
  \item[(C2)] the neighboring areas $j \in \mc N_{i}^\textup{out}$ feature
    both zero and nonzero residuals $r_{j}(t)$.
\end{enumerate}
Consider now the case of several simultaneously corrupted
subsystems. Then, if the graphical distance between any two corrupted
areas is at least $2$, that is, if there are at least two uncorrupted
areas between any two corrupted areas, corrupted areas can be
identified via our distributed method and criteria (C1) and (C2). An
upper bound on the maximum number of identifiable concurrent corrupted
areas can consequently be derived (see the related \emph{set packing}
problem in \cite{MRG-DSJ:79}).



\noindent \textbf{(S4: local identification)} Once the corrupted regions
have been identified, the identification method in Section
\ref{sec:identification} is used to identify the local attack set.

\begin{lemma}{\bf\emph{(Local 
      identification)}}\label{lemma:regional}
  Consider the decoupled system \eqref{eq:subsystem_3}. Assume that
  the $i$-th region is under the attack $K_{i}$ whereas the
  neighboring regions $\mc N_i^\textup{out}$ are uncorrupted. Assume
  that each control center $j \in \mc N_i^\textup{in}$ transmits the
  estimate $\hat x_j(t)$ and the uncertainty subspace $F_i$ to the
  $i$-th control center. Then, the attack set $K_{i}$ is identifiable
  by the $i$-th control center if $(E_{i},A_{i}, [\supscr{B}{b}_i
  F_{i} \; B_{K_{i}} \; B_{R_{i}}], C_{i} , [D_{K_{i}} \; D_{R_{i}}])$
  has no invariant zeros for any attack set $R_{i}$, with
  $|R_{i}|\le|K_{i}|$.
\end{lemma}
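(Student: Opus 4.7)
The plan is to reduce the local identification problem at control center $i$ to an instance of the centralized identification setting, and then invoke the invariant-zero characterization already established in the companion paper and used throughout this section (e.g., in the proof of Theorem \ref{Theorem: attack id filter}). The key observation is that step (S1) of the cooperative procedure guarantees $x_j(t) = \hat x_j(t) + \tilde x_j(t)$ with $\tilde x_j(t) \in \mc F_j$ for every $j \in \mc N_i^\textup{in}$, because the neighbors are assumed to be uncorrupted, so their unknown-input observers deliver exact estimates modulo the uncertainty subspace $\mc F_j$. Substituting this decomposition into \eqref{eq:subsystem_2} and absorbing $\supscr{B}{b}_i \hat x(t)$ (known at node $i$) into the drift term, the $i$-th subsystem, from the viewpoint of control center $i$, becomes a standard descriptor system driven by the two unknown inputs $\supscr{B}{b}_i F_i f_i(t)$ and $B_{K_i} u_{K_i}(t)$ with output perturbation $D_{K_i} u_{K_i}(t)$.

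Next I would apply Definition \ref{unidentifiable_input} to this reduced local system: $K_i$ fails to be identifiable by control center $i$ precisely when there exist an alternative set $R_i$ with $|R_i| \le |K_i|$, $R_i \neq K_i$, initial conditions and modes $u_{K_i}, u_{R_i}$, plus an auxiliary "uncertainty" signal $f_i$, producing identical outputs. By the Laplace-domain characterization of identifiability from \cite[Theorem 3.4]{FP-FD-FB:12a} (the same tool used in the proofs of Theorem \ref{Theorem: Centralized attack detection filter} and Theorem \ref{Theorem: attack id filter}), this is equivalent to the existence of a solution $(s,\bar x, g_{f}, g_{K_i}, g_{R_i})$ to the matrix-pencil equation associated with the combined signature $[\supscr{B}{b}_i F_i \; B_{K_i} \; B_{R_i}]$, $[0 \; D_{K_i} \; D_{R_i}]$, i.e., to the existence of an invariant zero of $(E_{i},A_{i}, [\supscr{B}{b}_i F_{i} \; B_{K_{i}} \; B_{R_{i}}], C_{i} , [D_{K_{i}} \; D_{R_{i}}])$. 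Contrapositively, the no-invariant-zero hypothesis of the lemma rules out every such alternative $R_i$, yielding identifiability.

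The main obstacle, and the step requiring care, is justifying rigorously that the uncertainty from neighbors is exactly captured by the fixed subspace $\mc F_i$ (so that it plays the role of an additional unknown input with signature $\supscr{B}{b}_i F_i$), rather than interacting with the attack signal in a way that would enlarge the effective uncertainty. This is precisely where the assumption that $\mc N_i^\textup{out}$ is uncorrupted enters: since each neighbor's observer is unbiased on the complement of $\mc F_j$, the error $\bar x(t) - \hat x(t)$ in \eqref{eq:error_distr_id} lives in $\operatorname{Im}(F)$ at all times, and hence can legitimately be lumped with $f_i(t)$. Once this reduction is in place, the remainder of the argument is a straightforward repetition of the pencil manipulation carried out in the proof of Theorem \ref{Theorem: attack id filter}, with $\supscr{B}{b}_i F_i$ playing the role previously played by $\tilde B_K$ in the conditioned representation.
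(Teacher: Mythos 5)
Your proposal is correct and follows essentially the same route as the paper: the paper's (much terser) proof likewise observes that the uncorrupted neighbors estimate their states exactly modulo $\mc F_j$, transmit these estimates so that the residual uncertainty enters the $i$-th subsystem as an unknown input with signature $\supscr{B}{b}_i F_i$, and then invokes the invariant-zero characterization of identifiability from \cite[Theorem 3.4]{FP-FD-FB:12a} applied to the augmented signature $[\supscr{B}{b}_i F_{i} \; B_{K_{i}} \; B_{R_{i}}]$. Your expanded justification of why the neighbor-induced uncertainty stays confined to the fixed subspace is exactly the content the paper compresses into its first sentence.
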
 
\begin{proof}
  Notice that each control center $j$, with $j \neq i$, can correctly
  estimate the state $x_j (t)$ modulo $\mc F_j$. Since this estimation
  is transmitted to the $i$-th control center, the statement follows
  from \cite[Theorem 3.4]{FP-FD-FB:12a}.
\end{proof}

The final identification procedure \textbf{(S4)} is implemented only
on the corrupted regions. Consequently, the combinatorial complexity
of our distributed identification procedure is $\sum_{i=1}^{\ell}
\binom{n_i + p_i}{|K_{i}|}$, where $\ell$ is the number of corrupted
regions. Hence, the distributed identification procedure greatly
reduces the combinatorial complexity of the centralized procedure
presented in Subsection \ref{sec:identification_centralized}, which
requires the implementation of $\binom{n+p}{|K|}$ filters.  Finally,
the assumptions of Lemma \ref{lemma_regional_identification} and Lemma
\ref{lemma:regional} clearly improve upon the limitations (L3) and
(L4) of the naive decoupled approach presented in Subsection
\ref{Subsection: Decoupled regional attack identification monitor
  design}. We conclude this section with an example showing that,
contrary to the limitation (L5) of the naive fully decoupled approach,
boundary attacks $K_{i} \subseteq \supscr{K}{b}_{i}$ can be identified
by our cooperative attack identification method.

\begin{example}{\bf\emph{(An example of cooperative identification)}}

  \begin{figure}
    \centering
    \includegraphics[width=.75\columnwidth]{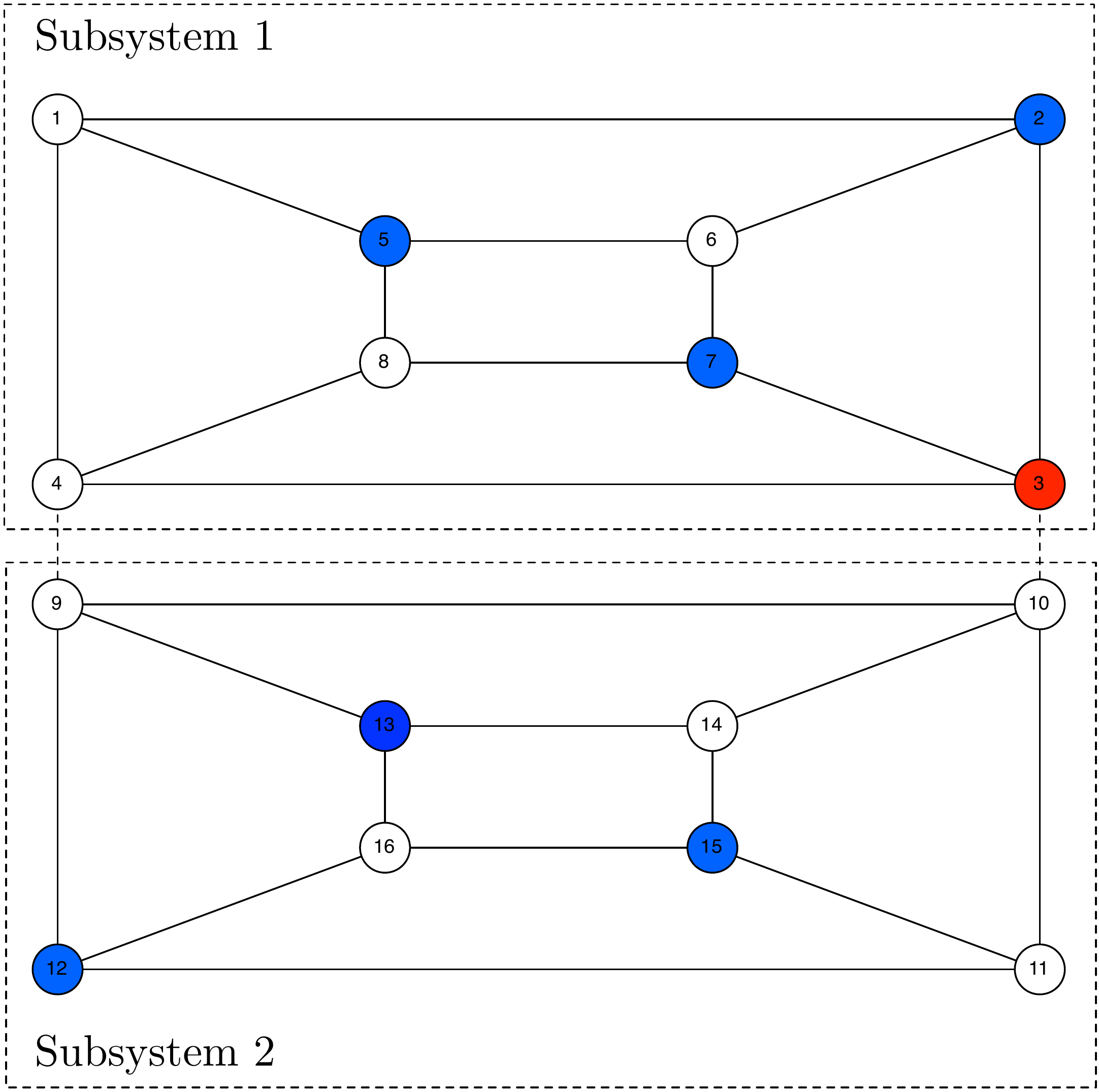}
    \caption{This figure shows a network composed of two subsystems. A
      control center is assigned to each subsystem. Each control center knows only the
      dynamics of its local subsystem. The state of the blue nodes $\{2,5,7,12,13,15\}$ is
      continuously measured by the corresponding control center, and
      the state of the red node $\{3\}$ is corrupted by an attacker. The
      decoupled identification procedure presented in Subsection \ref{Subsection:
        Decoupled regional attack identification monitor design} fails
      at detecting the attack. Instead, by means of our cooperative
      identification procedure, the attack can be detected and
      identified via distributed computation.}
    \label{fig:4rombi}
  \end{figure}
  Consider the sensor network in Fig. \ref{fig:4rombi}, where the
  state of the blue nodes $\{2,5,7,12,13,15\}$ is measured and the state of the red node $\{3\}$ is
  corrupted by an attacker. Assume that the network evolves according
  to nonsingular, linear, time-invariant dynamics. Assume further that
  the network has been partitioned into the two areas $V_1 = \{1,\dots,8\}$
  and $V_2 = \{9,\dots,16\}$ and at most
  one area is under attack. Since $\{3,4\}$ are the boundary
  nodes for the first area, the attack set $K = 3$ is neither
  detectable nor identifiable by the two control centers via the fully
  decoupled procedure in Section \ref{Subsection: Decoupled regional
    attack identification monitor design}.

  Consider now the second subsystem with the boundary nodes
  $\supscr{K}{b}_2 = \{9,10\}$. It can be shown that, generically, the
  second subsystem with unknown input $\supscr{B}{b}_2 f_2(t)$ has no
  invariant zeros; see \cite[Section V]{FP-FD-FB:12a}. Hence, the state of the second
  subsystem can be entirely reconstructed. Analogously, since the
  attack is on the boundary of the first subsystem, the state of the
  first subsystem can be reconstructed, so that the residual $r_2(t)$
  is identically zero; see Lemma \ref{lemma_regional_identification}.

  Suppose that the state of the second subsystem is continuously
  transmitted to the control center of the first subsystem. Then, the
  only unknown input in the first subsystem is due to the attack,
  which is now generically detectable and identifiable, since the
  associated system has no invariant zeros; see Lemma
  \ref{lemma:regional}. We conclude that our cooperative
  identification procedure outperforms the decoupled counterpart in
  Section \ref{Subsection: Decoupled regional attack identification
    monitor design}.  \oprocend
\end{example}





\section{A case study: the IEEE RTS96 system}\label{sec:example}
\begin{figure}
    \centering
    \includegraphics[width=.75\columnwidth]{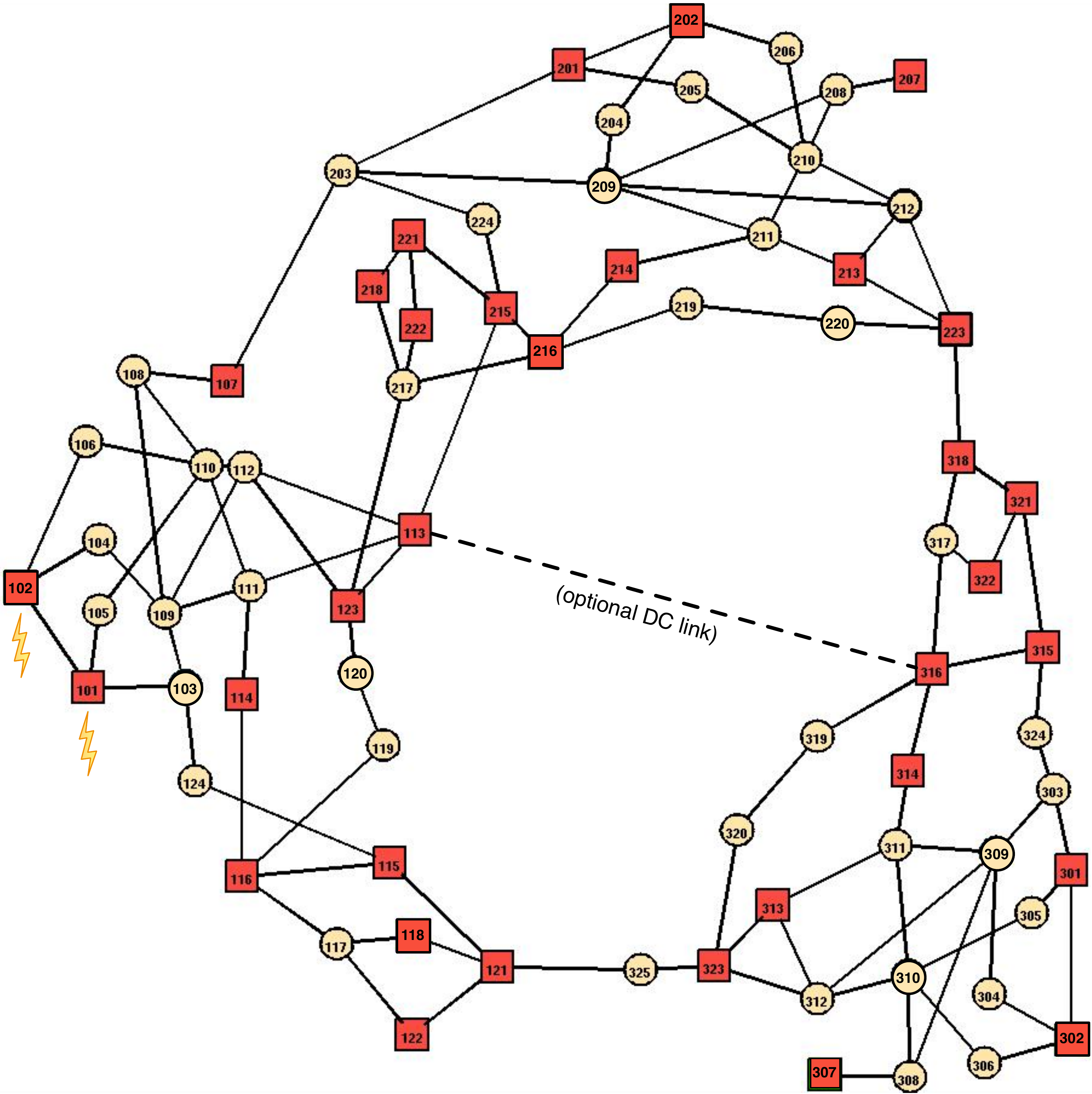}
    \caption{This figure illustrates the IEEE RTS96 power network
      \cite{CG-PW-PA-RA-MB-RB-QC-CF-SH-SK-WL-RM-DP-NR-DR-AS-MS-CS:99}. The
      dynamics of the generators $\{101,102\}$ are affected by an
      attacker.}
    \label{fig:rts96}
\end{figure}

\begin{figure}
   \centering
   \includegraphics[width=.8\columnwidth]{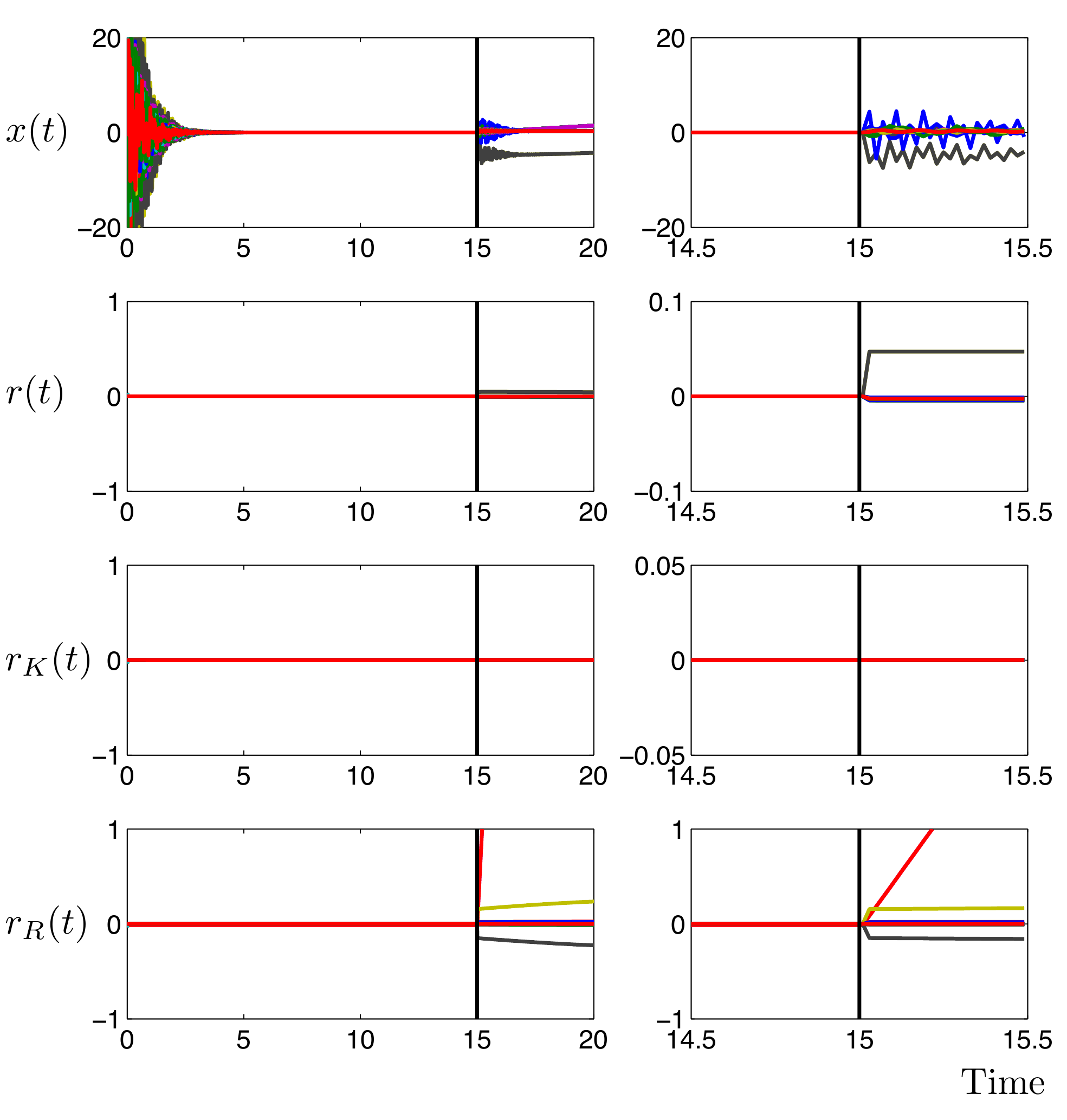}
   \caption{In this figure we report our simulation results for the
     case of linear network dynamics without noise and for the
     proposed detection monitor \eqref{eq:detection_filter} and
     identification monitor \eqref{eq: identification filter},
     respectively. The state trajectory $x(t)$ consists of the
     generators angles and frequencies. The detection residual $r(t)$
     becomes nonzero after time $15$s, and it reveals the presence of
     the attack. The identification residual $r_K(t)$ is identically
     zero even after time $15$s, and it reveals that the attack set is
     $K = \{101,102\}$. The identification residual $r_R (t)$ is
     nonzero after time $15$s, and it reveals that $R$ is not the
     attack set.}
   \label{fig:nominal}
\end{figure}

In this section we apply our centralized attack detection and
identification methods to the IEEE RTS96 power network
\cite{CG-PW-PA-RA-MB-RB-QC-CF-SH-SK-WL-RM-DP-NR-DR-AS-MS-CS:99}
illustrated in Fig. \ref{fig:rts96}. In particular, we first consider
the nominal case, in which the power network dynamics evolve as
nominal linear time-invariant descriptor system, as described in
\cite[Section II.C]{FP-FD-FB:12a}. Second, we consider the case of
additive state and measurement noise, and we show the robustness of
the attack detection and identification monitors. Third, we consider
the case of nonlinear differential-algebraic power network dynamics
and show the effectiveness of our methods in the presence of unmodeled
nonlinear dynamics.

For our numerical studies, we assume the angles and frequencies of
every generator to be measured. Additionally, we let the attacker
affect the angles of the generators $\{101,102\}$ with a random signal
starting from time $15\text{s}$. Since the considered power network
dynamics are of index one, the filters are implemented using the
nonsingular Kron-reduced system representation \cite[Section
III.D]{FP-FD-FB:12a}. The results of our simulations are in
Fig. \ref{fig:nominal}, Fig. \ref{fig:noise}, and
Fig. \ref{fig:nonlinear}. In conclusion, our centralized detection and
identification filters appears robust to state and measurements noise
and unmodeled dynamics.

\begin{figure}
   \centering
   \includegraphics[width=.8\columnwidth]{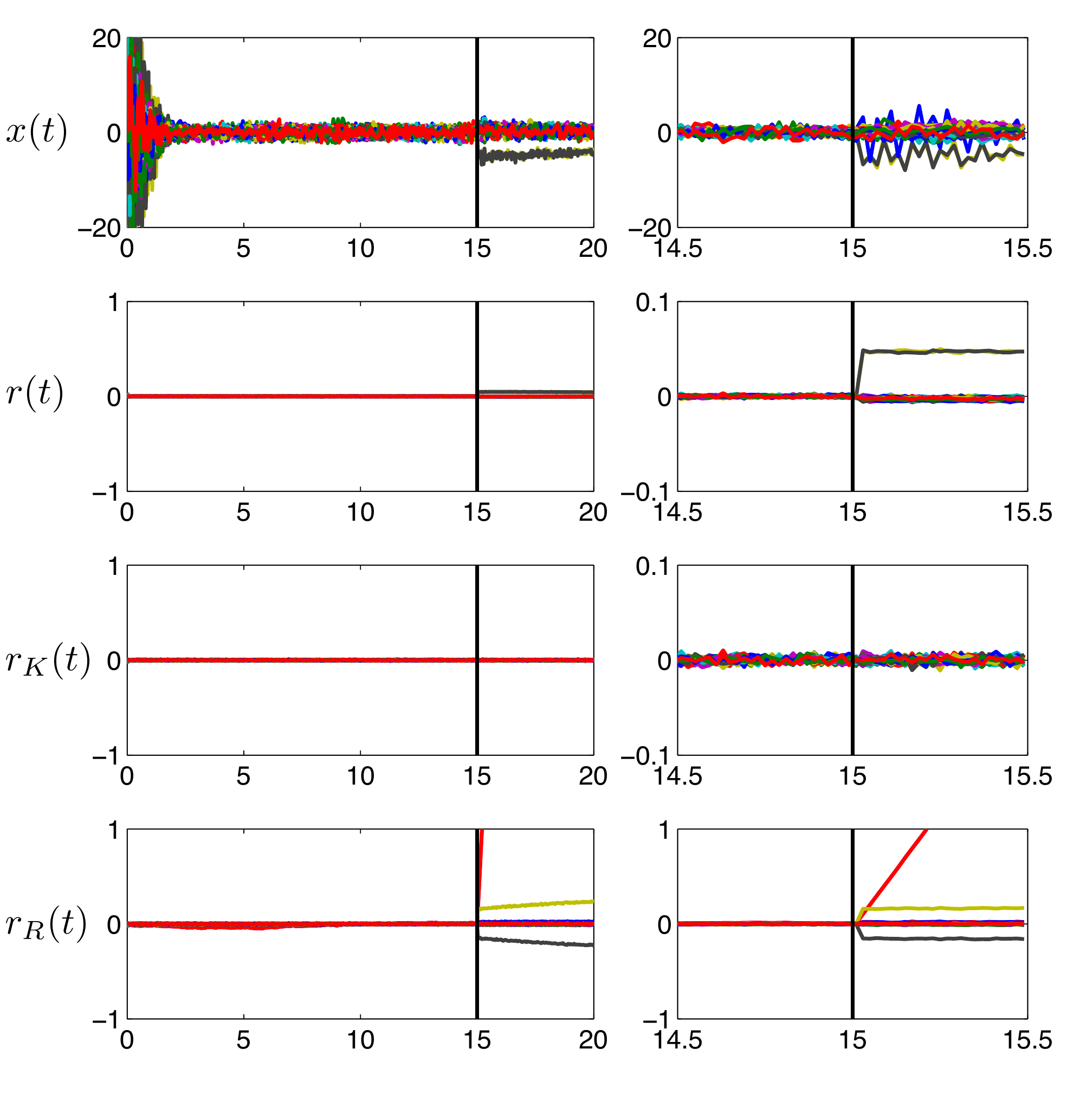}
   \caption{In this figure we report our simulation results for the
     case of linear network dynamics driven by state and measurements
     noise. For this case, we choose the output injection matrices of
     the detection and identification filters as the corresponding
     optimal Kalman gain (see Remark \ref{remark:det_noise} and Remark
     \ref{remark:id_noise}). Due to the presence of noise, the
     residuals deviate from their nominal behavior reported in
     Fig. \ref{fig:nominal}. Although the attack is clearly still
     detectable and identifiable, additional statistical tools such as
     hypothesis testing \cite{MB-IVN:93} may be adopted to analyze the
     residuals $r(t)$, $r_K(t)$, and $r_R (t)$.}
   \label{fig:noise}
\end{figure}

\begin{figure}
   \centering
   \includegraphics[width=.8\columnwidth]{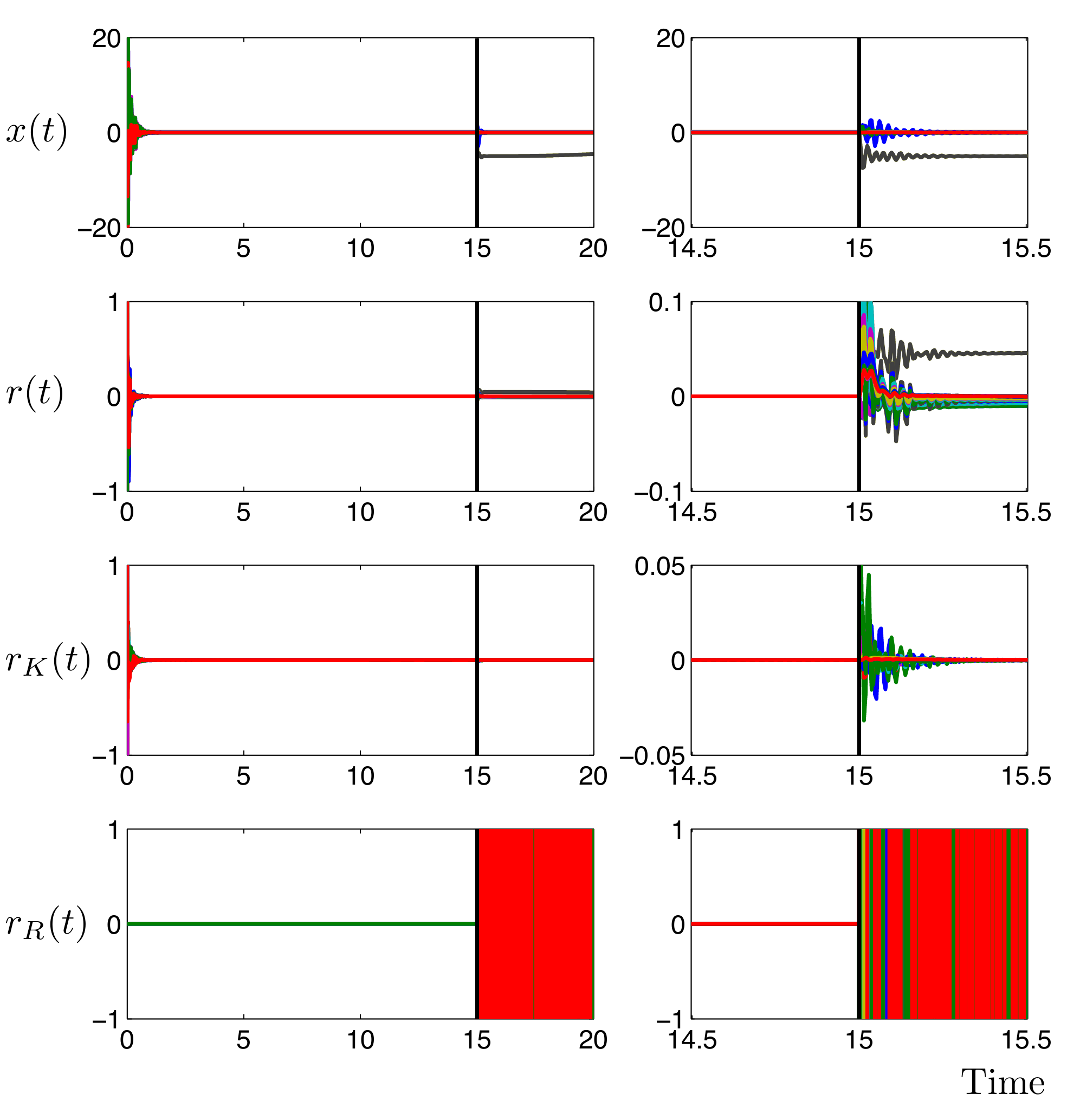}
   \caption{In this figure we report our simulation results for the
     case of nonlinear network dynamics without noise. For this case,
     the detection and identification filters are designed for the
     {\em nominal linearized dynamics} with output injection matrices
     as the corresponding optimal Kalman gain (see Remark
     \ref{remark:det_noise} and Remark \ref{remark:id_noise}). Despite
     the presence of unmodeled nonlinear dynamics, the residuals
     reflect their nominal behavior reported in
     Fig. \ref{fig:nominal}.}
   \label{fig:nonlinear}
\end{figure}





\section{Conclusion}\label{sec:conclusion}
For cyber-physical systems modeled by linear time-invariant descriptor
systems, we proposed attack detection and identification monitors. In
particular, for the detection problem we developed both centralized and
distributed monitors. These monitors are optimal, in the sense that
they detect every detectable attack. For the attack identification
problem, we developed an optimal centralized monitor and a sub-optimal
distributed method. Our centralized attack identification monitor
relies upon a combinatorial machinery. Our distributed attack
identification monitor, instead, is computationally efficient and
achieves guaranteed identification of a class of attacks, which we
characterize. Finally, we provided several examples to show the
effectiveness and the robustness of our methods against uncertainties
and unmodeled dynamics.


\renewcommand{\theequation}{A-\arabic{equation}}
\setcounter{equation}{0}  
\section*{APPENDIX}  
In this section we present an algebraic technique to reconstruct the
state of a descriptor system. Our method builds upon the results
presented in \cite{FJB-TF-WP-GZ:11}. Consider the descriptor model
\eqref{eq: cyber_physical_fault} written in the form (see
\cite[Section IV.C]{FP-FD-FB:12a})
\begin{align}
\label{eq:partitioned_descriptor_system}
\begin{split}
  \dot x_1 (t)&= A_{11} x_1(t) + A_{12} x_2(t) + B_1 u(t)\,,\\
  0 &= A_{21} x_{1}(t) + A_{22} x_{2}(t) + B_{2} u(t)\,, \\
  y(t) &= C_{1}x_{1}(t) + C_{2} x_{2}(t)  + Du(t)
  \,.
\end{split}
\end{align}
We aim at characterizing the largest subspace of the state
space of \eqref{eq:partitioned_descriptor_system} that can be
reconstructed through the measurements $y(t)$. 
Consider the associated nonsingular system
\begin{align}
    \dot{\tilde x}_1 (t)&= A_{11} \tilde x_1(t) + B_1 \tilde u(t) + A_{12}
    \tilde x_2(t),
    \label{eq:associated_nonsingular_system} \\
    \tilde y(t) & =
    \begin{bmatrix}
      \tilde y_1 (t) \\ \tilde y_2 (t)
    \end{bmatrix}
    =
    \begin{bmatrix}
      A_{21}\\
      C_1
    \end{bmatrix}
    \tilde x_1(t) + 
    \begin{bmatrix}
      A_{22}  & B_2\\
      C_2 & D
    \end{bmatrix}
    \begin{bmatrix}
      \tilde x_2(t)\\
      \tilde u(t)
    \end{bmatrix}
    \,.
	\nonumber
\end{align}
Recall from \cite[Section 4]{GB-GM:91} that the state of the system
\eqref{eq:associated_nonsingular_system} can be reconstructed modulo
its largest controlled invariant subspace $\Vtar_1$ contained in the
null space of the output matrix. 


\begin{lemma}{\bf\emph{(Reconstruction of the state
      $x_{1}(t)$)}}\label{lemma:reconstruction_x1}
  Let $\Vtar_1$ be the largest controlled invariant subspace of the
  system \eqref{eq:associated_nonsingular_system}. The state $x_1(t)$
  of the system \eqref{eq:partitioned_descriptor_system} can be
  reconstructed only modulo $\Vtar_1$ through the measurements $y(t)$.
\end{lemma}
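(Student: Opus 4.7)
The plan is to reduce the reconstructibility question for the descriptor system \eqref{eq:partitioned_descriptor_system} to a standard unknown-input observer problem for the nonsingular system \eqref{eq:associated_nonsingular_system}, and then invoke the known geometric characterization in terms of controlled invariant subspaces. The first step is to reinterpret the algebraic constraint $0 = A_{21} x_{1} + A_{22} x_{2} + B_{2} u$ as an additional (trivially known) output equation for the nonsingular system obtained by lumping $x_{2}$ and $u$ into a single unknown input $v = [x_{2}^{\transpose}\; u^{\transpose}]^{\transpose}$. Specifically, I would observe that, along any trajectory of \eqref{eq:partitioned_descriptor_system}, the pair $(x_{1}(t),v(t))$ is a trajectory of \eqref{eq:associated_nonsingular_system} in which $\tilde y_{1}(t) = 0$ for all $t$ and $\tilde y_{2}(t) = y(t)$. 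Conversely, any trajectory of \eqref{eq:associated_nonsingular_system} with $\tilde y_{1}(t) = 0$ corresponds to a trajectory of the descriptor system with $x_{1} = \tilde x_{1}$, $x_{2} = \tilde x_{2}$, $u = \tilde u$. Hence reconstructibility of $x_{1}(t)$ from $y(t)$ in \eqref{eq:partitioned_descriptor_system} is equivalent to reconstructibility of $\tilde x_{1}(t)$ from the combined output $\tilde y(t) = [0\;\; y(t)^{\transpose}]^{\transpose}$ in \eqref{eq:associated_nonsingular_system} with unknown input $v$.

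Next, I would apply the classical result from geometric control theory \cite{GB-GM:91} for nonsingular LTI systems with unknown inputs: the state can be reconstructed asymptotically, by means of a suitably designed unknown-input observer, precisely modulo the largest controlled invariant subspace contained in the kernel of the output map, which by hypothesis of the lemma is $\Vtar_1$. This establishes the achievability direction, i.e., that $x_{1}(t)$ can be reconstructed modulo $\Vtar_1$.

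For the tightness direction, I would argue that no finer reconstruction is possible. By definition of controlled invariance, for every $\xi \in \Vtar_1$ there exists a friend $F$ and a trajectory $\tilde x_{1}(t)$ of \eqref{eq:associated_nonsingular_system} driven by the feedback input $v(t) = F\tilde x_{1}(t)$ that starts at $\xi$, remains in $\Vtar_1$, and produces $\tilde y(t) = 0$ for all $t \ge 0$. By the equivalence established in the first paragraph, this feedback input decomposes into admissible $x_{2}(t)$ and $u(t)$ that satisfy the algebraic constraint and produce zero measurement $y(t)$. Superimposing this trajectory on any nominal trajectory $(x_{1}^{\circ}, x_{2}^{\circ}, u^{\circ})$ yields two distinct initial conditions $x_{1}^{\circ}(0)$ and $x_{1}^{\circ}(0) + \xi$ that generate identical outputs $y(t)$, so they cannot be distinguished. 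Therefore $\Vtar_1$ is the exact indistinguishability subspace for $x_{1}(t)$.

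The main obstacle I anticipate is the tightness step, specifically verifying that the friend-based trajectory constructed in the nonsingular system corresponds to an admissible trajectory of the descriptor system, i.e., that the algebraic constraint and the measurement equation are \emph{simultaneously} satisfied. This is, however, exactly what the augmented output $\tilde y = [\tilde y_{1}^{\transpose}\; \tilde y_{2}^{\transpose}]^{\transpose} \equiv 0$ enforces by construction, so the obstacle dissolves once the output is augmented with the algebraic constraint. A secondary technical issue is ensuring consistency of the initial condition (Assumption (A2)) and smoothness of the constructed $u(t)$ (Assumption (A3)); these can be arranged by choosing a smooth friend $F$ and by projecting $\xi$ onto the consistent subspace, which does not reduce $\Vtar_1$ since $\Vtar_1$ is already contained in the admissible directions of the dynamic component $x_{1}$.
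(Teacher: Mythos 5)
Your proposal is correct and takes essentially the same route as the paper: both arguments identify trajectories of the descriptor system with trajectories of the associated nonsingular system in which the algebraic constraint appears as the output component $\tilde y_1 \equiv 0$, and then invoke the geometric characterization of reconstructibility modulo the maximal output-nulling controlled invariant subspace $\Vtar_1$ for both the achievability and the indistinguishability directions. Your extra remarks on consistency and smoothness of the constructed inputs are a harmless elaboration that the paper leaves implicit.
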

\begin{proof}
  We start by showing that for every $x_1 (0) \in \Vtar_1$ there exist
  $x_2(t)$ and $u(t)$ such that $y(t)$ is identically zero. Due to the
  linearity of \eqref{eq:partitioned_descriptor_system}, we conclude
  that the projection of $x_1(t )$ onto $\Vtar_1$ cannot be
  reconstructed. Notice that for every $\tilde x_1 (0)$, $\tilde
  x_2(t)$, and $\tilde u(t)$ yielding $\tilde y_1(t) = 0$ at all
  times, the state trajectory $[\tilde x_1 (t) \; \tilde x_2(t)]$ is a
  solution to \eqref{eq:partitioned_descriptor_system} with input
  $u(t) = \tilde u(t)$ and output $y(t) = \tilde y_2(t)$. Since for
  every $\tilde x_1(0) \in \Vtar_1$, there exists $\tilde x_2(t)$ and
  $\tilde u(t)$ such that $\tilde y(t)$ is identically zero, we
  conclude that every state $x_1 (0) \in \Vtar_1$ cannot be
  reconstructed.
  
  We now show that the state $x_1 (t)$ can be reconstructed modulo
  $\Vtar_1$. Let $x_1 (0)$ be orthogonal to $\Vtar_1$, and let
  $x_1(t)$, $x_2(t)$, and $y(t)$ be the solution to
  \eqref{eq:partitioned_descriptor_system} subject to the input
  $u(t)$. Notice that $\tilde x_1 (t) = x_1(t)$, $\tilde y_1 (t) = 0$,
  and $\tilde y_2(t) = y(t)$ is the solution to
  \eqref{eq:associated_nonsingular_system} with inputs $\tilde x_2 (t)
  = x_2(t)$ and $\tilde u (t) = u(t)$. Since $\tilde x_1(0)$ is
  orthogonal to $\Vtar_1$, we conclude that $\tilde x_1(0) = x_1 (0)$,
  and in fact the subspace $(\Vtar)^\perp$, can be reconstructed
  through the measurements $\tilde y_2(t) = y(t)$.
\end{proof}

In Lemma \ref{lemma:reconstruction_x1} we show that the state $x_1
(t)$ of \eqref{eq:partitioned_descriptor_system} can be reconstructed
modulo $\Vtar_1$. We now show that the state $x_2 (t)$ can generally not be completely reconstructed.

\begin{lemma}\emph{\bf (Reconstruction of the state $x_{2}(t)$)}\label{lemma:reconstruction_x2}
  Let $\Vtar_1 = \Image(V_1)$ be the largest controlled invariant
  subspace of the system \eqref{eq:associated_nonsingular_system}. The
  state $x_2(t)$ of the system
  \eqref{eq:partitioned_descriptor_system} can be reconstructed only
  modulo $\Vtar_2 = A_{22}^{-1} \Image ( [A_{21}V_1 \; B_2 ] )$.
\end{lemma}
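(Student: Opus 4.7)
The plan is to mirror the two-step structure used in the proof of Lemma~\ref{lemma:reconstruction_x1}: first show that every $v\in\Vtar_2$ allows the construction of a pair of trajectories of \eqref{eq:partitioned_descriptor_system} producing identical measurements $y(t)$ but differing by $v$ in $x_2(0)$, so that $x_2$ is indistinguishable within $\Vtar_2$; then show conversely that any two trajectories producing identical outputs must have $x_2(t)$ congruent modulo $\Vtar_2$. The key structural observation is the algebraic constraint $A_{22}\,x_2(t) = -A_{21}\,x_1(t) - B_2\,u(t)$, which, combined with Lemma~\ref{lemma:reconstruction_x1} (quantifying the indeterminacy of $x_1$ by $\Vtar_1 = \Image(V_1)$) and the unknown input $u(t)$, yields an indeterminacy in $A_{22}x_2$ of $\Image([A_{21}V_1 \; B_2])$, and hence in $x_2$ of exactly $\Vtar_2 = A_{22}^{-1}\Image([A_{21}V_1 \; B_2])$.

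For the unobservability direction, I pick $v\in\Vtar_2$ and extract $\alpha,\beta$ with $A_{22}v = -A_{21}V_1\alpha - B_2\beta$. Applying the proof of Lemma~\ref{lemma:reconstruction_x1} to the initial condition $\tilde x_1(0) = V_1\alpha \in \Vtar_1$ of the associated system \eqref{eq:associated_nonsingular_system} yields friend signals $(\Delta\tilde x_2(t),\Delta\tilde u(t))$ maintaining both components of $\tilde y$ identically zero. Exploiting the affine freedom at $t=0$ in the choice of the friend (governed by the algebraic constraint $A_{22}\Delta\tilde x_2(0) + B_2\Delta\tilde u(0) = -A_{21}\Delta\tilde x_1(0)$), I can enforce $\Delta\tilde x_2(0) = v$ and $\Delta\tilde u(0) = \beta$. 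Reinterpreting these signals as a solution of the homogeneous version of \eqref{eq:partitioned_descriptor_system} produces a nonzero difference trajectory with identically zero output and $\Delta x_2(0) = v$, so by linearity $x_2(0)$ and $x_2(0)+v$ are indistinguishable.

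For the recoverability direction, take two solutions of \eqref{eq:partitioned_descriptor_system} producing the same output; their difference $(\Delta x_1,\Delta x_2,\Delta u)$ is a homogeneous solution with zero output. Lemma~\ref{lemma:reconstruction_x1}, applied to the time-shifted trajectory starting at any $t_0 \geq 0$, forces $\Delta x_1(t_0) \in \Vtar_1$, so $\Delta x_1(t) = V_1\alpha(t)$ for all $t$. The algebraic constraint then gives $A_{22}\Delta x_2(t) = -A_{21}V_1\alpha(t) - B_2\Delta u(t) \in \Image([A_{21}V_1 \; B_2])$, i.e., $\Delta x_2(t) \in \Vtar_2$ for all $t$. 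Together with the first part, this shows $x_2(t)$ is recoverable precisely modulo $\Vtar_2$.

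The main technical obstacle is the friend-construction step in the necessity direction: in \eqref{eq:associated_nonsingular_system} the combined input $(\tilde x_2,\tilde u)$ plays the role of a free signal, and one must confirm that the affine freedom at $t=0$ is large enough to realize the prescribed pair $(v,\beta)$ \emph{and} that this initial friend value extends to a full trajectory keeping both output equations $\tilde y_1 \equiv 0$ and $\tilde y_2 \equiv 0$ simultaneously. This reduces to the standard characterization of friends of output-nulling controlled invariant subspaces \cite{GB-GM:91,TG:93,FLL:90}: the condition $v \in \Vtar_2$ is exactly equivalent to $(v,\beta)$ lying in the admissible friend fiber over $V_1\alpha$, which by maximality of $\Vtar_1$ extends smoothly to a friend trajectory. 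This friend-extension step is the only nonroutine calculation in the proof.
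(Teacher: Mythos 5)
Your recoverability direction is essentially the paper's argument. The paper writes the algebraic constraint $0 = A_{21}x_1(t) + A_{22}x_2(t) + B_2u(t)$, splits $x_1$ into its reconstructible part $\hat x_1$ and its unknown part $V_1 v_1(t) \in \Vtar_1$ (via Lemma \ref{lemma:reconstruction_x1}), and applies a matrix $W$ with $\Ker(W)=\Image([A_{21}V_1\;B_2])$ to obtain $WA_{22}x_2 = -WA_{21}\hat x_1$; this pins down $x_2$ modulo $\Ker(WA_{22}) = A_{22}^{-1}\Image([A_{21}V_1\;B_2]) = \Vtar_2$ and yields the explicit reconstruction $\hat x_2 = (WA_{22})^\dag WA_{21}\hat x_1$. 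Your difference-trajectory version of this step is equivalent and correct.

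The gap is in your unobservability direction, exactly at the step you flagged. The admissible friend fiber over $V_1\alpha$ for the associated system \eqref{eq:associated_nonsingular_system} must null \emph{both} rows of its output: a pair $(v,\beta)$ must satisfy $A_{21}V_1\alpha + A_{22}v + B_2\beta = 0$ \emph{and} $C_1V_1\alpha + C_2v + D\beta = 0$ (and in addition keep $A_{11}V_1\alpha + A_{12}v + B_1\beta$ inside $\Vtar_1$). Membership $v\in\Vtar_2 = A_{22}^{-1}\Image([A_{21}V_1\;B_2])$ only guarantees solvability of the first row; it says nothing about the measurement row, so your claimed equivalence between $v\in\Vtar_2$ and $(v,\beta)$ lying in the friend fiber is false in general, and the projection of the true friend fiber onto the $\tilde x_2$-component may be a proper subspace of $\Vtar_2$. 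In fairness, the paper's own proof silently omits this direction altogether --- it only establishes that $x_2$ \emph{can} be reconstructed modulo $\Vtar_2$, not that every element of $\Vtar_2$ is genuinely unrecoverable --- so your attempt is more ambitious than the printed proof, but as written that half does not close.
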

\begin{proof}
  Let $x_1 (t) = \bar x_1 (t) + \hat x_1 (t)$, where $\bar x_1 (t)
  \in \Vtar_1$ and $\hat x_1 (t)$ is orthogonal to $\Vtar_1$. From
  Lemma \ref{lemma:reconstruction_x1}, the signal $\hat x_1(t)$ can be
  entirely reconstructed via $y(t)$. Notice that 
  \begin{align*}
    0 &= A_{21} x_1(t) + A_{22} x_2(t) + B_2 u(t),\\
    &= A_{21} V_1 v_1(t) + A_{21} \hat x_1(t) + A_{22} x_2(t) + B_2 u(t).
  \end{align*}
  Let $W$ be such that $\Ker (W) = \Image([A_{21}V_1 \; B_2 ])$. Then,
  $0 = W A_{21} \hat x_1(t) + W A_{22} x_2(t)$, and hence $x_2 (t) =
  \bar x_2(t) + \hat x_2(t)$, where $\hat x_2 (t) = (W A_{22})^\dag W
  A_{21} \hat x_1(t)$, and $\bar x_2 (t) \in \Ker (W A_{22}) =
  A_{22}^{-1} \Image ( [A_{21}V_1 \; B_2 ] )$. The statement follows.
\end{proof}

To conclude the paper, we remark the following points. First, our
characterization of $\Vtar_1$ and $\Vtar_2$ is equivalent to the
definition of \emph{weakly unobservable} subspace in \cite{TG:93}, and
of maximal \emph{output-nulling} subspace in \cite{FLL:90}. Hence, we
 proposed an optimal state estimator for our distributed attack
identification procedure, and the matrix $V_i$ in {\bf (S1: estimation
  and communication)} can be computed as in
\cite{TG:93,FLL:90}. Second, a reconstruction of $x_1 (t)$ modulo
$\Vtar_1$ and $x_2(t)$ modulo $\Vtar_2$ can be obtained through
standard algebraic techniques \cite{GB-GM:91}. Third and finally,
Lemma \ref{lemma:reconstruction_x1} and Lemma
\ref{lemma:reconstruction_x2} extend the results in
\cite{FJB-TF-WP-GZ:11} by characterizing the subspaces of the state
space that can be reconstructed with an algebraic method by processing
the measurements $y(t)$ and their derivatives.


\bibliographystyle{IEEEtran}
\bibliography{alias,Main,FB}









\end{document}